\DeclareMathAlphabet{\mathpzc}{OT1}{pzc}{m}{it}
\newcommand{\R}{\mathbb{R}}
\newcommand{\C}{\mathbb{C}}
\newcommand\Z{\mathbb{Z}}
\newcommand{\N}{\mathbb{N}}
\newcommand{\Q}{\mathbb{Q}}
\newcommand{\Lb}{\mathbf{L}}
\newcommand{\Pb}{\mathbb{P}}
\newcommand{\T}{\mathrm{T}}
\newcommand{\Ub}{\mathbf{U}}
\newcommand{\Wb}{\mathbf{W}}
\newcommand{\pp}{\mathbf{p}}
\newcommand{\xx}{\mathbf{x}}
\newcommand{\yy}{\mathbf{y}}
\newcommand{\Dcal}{\mathcal{D}}
\newcommand{\Ecal}{\mathcal{E}}
\newcommand{\Hcal}{\mathcal{H}}
\newcommand{\Ical}{\mathcal{I}}
\newcommand{\Lcal}{\mathcal{L}}
\newcommand{\Mcal}{\mathcal{M}}
\newcommand{\Ocal}{\mathcal{O}}
\newcommand{\Pcal}{\mathcal{P}}
\newcommand{\Scal}{\mathcal{S}}
\newcommand{\Ucal}{\mathcal{U}}
\newcommand{\Cc}{\mathcal{C}}
\newcommand{\Dc}{\mathcal{D}}
\newcommand{\Ec}{\mathcal{E}}
\newcommand{\Hc}{\mathcal{H}}
\newcommand{\Ic}{\mathcal{I}}
\newcommand{\Jc}{\mathcal{J}}
\newcommand{\Lc}{\mathcal{L}}
\newcommand{\Pc}{\mathcal{P}}
\newcommand{\Sc}{\mathcal{S}}
\newcommand{\Tc}{\mathcal{T}}
\newcommand{\Uc}{\mathcal{U}}
\renewcommand{\AA}{\mathscr{A}}
\newcommand{\CC}{\mathscr{C}}
\newcommand{\OO}{\mathscr{O}}
\newcommand{\card}{{\rm card}}
\newcommand{\id}{\mathrm{id}}
\newcommand{\Id}{\mathrm{Id}}
\newcommand{\CP}{\mathbb{P}}
\newcommand{\SL}{{\rm SL}}
\newcommand{\Mod}{\mathcal{M}}
\newcommand{\Aa}{\textrm{Area}}
\newcommand{\vol}{{\rm vol}}
\newcommand{\Vol}{{\rm Vol}}
\newcommand{\PSL}{\mathrm{PSL}}
\newcommand{\imag}{\mathrm{Im}}
\newcommand{\hx}{\hat{x}}
\newcommand{\hX}{\hat{X}}
\newcommand{\hZ}{\hat{Z}}
\newcommand{\homg}{\hat{\omega}}
\newcommand{\vide}{\varnothing}
\newcommand{\Sig}{\Sigma}
\newcommand{\eps}{\epsilon}
\newcommand{\ol}{\overline}
\newcommand{\ul}{\underline}
\newcommand{\ra}{\rightarrow}
\newcommand\cal{\mathcal}
\newcommand{\strate}{\Omega^d\Mcal_{g,n}(\kappa)}
\newcommand{\pstrate}{\mathbb{P}\Omega^d\Mcal_{g,n}(\kappa)}
\newcommand{\stratei}{\Omega^d_1\mathcal{M}_{g,n}(\kappa)}
\newcommand{\istrate}{\Omega^d_1\mathcal{M}_{g,n}(\kappa)}
\newcommand{\stratesp}{\Omega^d\Mcal_{0,n}(\kappa)}
\newcommand{\pstratesp}{\mathbb{P}\Omega^d\Mcal_{0,n}(\kappa)}
\newcommand{\clpstratesp}{\Pb\Omega^d\ol{\Mod}_{0,n}(\kappa)}
\newcommand{\blowupsp}{\widehat{\Mod}_{0,n}(\kappa)}
\newcommand{\hodge}{\mathcal{H}_{g,n}}
\newcommand{\khodge}{\mathcal{H}^{(d)}_{g,n}}
\newcommand{\khodgespext}{\overline{\mathcal{H}}^{(d)}_{0,n}}
\newcommand{\pkhodgespext}{\mathbb{P}\overline{\mathcal{H}}^{(d)}_{0,n}}
\newtheorem{Theorem}{Theorem}[section]
\newtheorem{Corollary}[Theorem]{Corollary}
\newtheorem{Lemma}[Theorem]{Lemma}
\newtheorem{Proposition}[Theorem]{Proposition}
\newtheorem{Definition}[Theorem]{Definition}
\newtheorem{Claim}[Theorem]{Claim}
\theoremstyle{remark}
\theoremstyle{example}
\newtheorem{Remark}[Theorem]{Remark}
\begin{document}
\title[Volumes of moduli spaces of flat surfaces in genus 0]{Intersection theory and volumes of moduli spaces of flat metrics on the sphere}

\author[D.-M. Nguyen]{Duc-Manh Nguyen \\  {(\tiny with an appendix by}  Vincent Koziarz {\tiny and } Duc-Manh Nguyen)}

\address{Univ. Bordeaux, CNRS, Bordeaux INP, IMB, UMR 5251, F-33405 Talence, France}
\email[D.-M.~Nguyen]{duc-manh.nguyen@math.u-bordeaux.fr}

\email[V.~Koziarz]{vincent.koziarz@math.u-bordeaux.fr}

\date{\today}
\begin{abstract}
Let $\mathbb{P}\Omega^d\mathcal{M}_{0,n}(\kappa)$, where $\kappa=(k_1,\dots,k_n)$, be a stratum of (projectivized) $d$-differentials in genus $0$. We prove a recursive formula which relates the volume of $\mathbb{P}\Omega^d\mathcal{M}_{0,n}(\kappa)$ to the volumes of other strata of lower dimensions in the case where none of the $k_i$ is divisible by $d$. As an application, we give a new proof of the Kontsevich's formula for the volumes of strata of quadratic differentials with simple poles and zeros of odd order, which was originally proved by Athreya-Eskin-Zorich. In another application, we show that up to some power of $\pi$, the volume of the moduli spaces of flat metrics on the sphere with prescribed cone angles is a  continuous piecewise polynomial with rational coefficients function of the angles,  provided none of the angles is an integral multiple of $2\pi$. This generalizes the results of \cite{McM_conegb} and \cite{KN18}.
\end{abstract}

\maketitle


%

\section{Introduction}\label{sec:intro}
\subsection{Volumes on moduli spaces of flat surfaces}\label{subsec:intro:flat:surf}
In this paper, by a {\em flat surface} we will mean a compact, closed, oriented  surface endowed with an Euclidean metric structure outside a finite subset called singularities such that each singularity has a neighborhood isometric to an Euclidean cone. The conformal structure induced by the flat metric turns the underlying surface into a Riemann surface with marked points at the singularities.
By a result of  Troyanov~\cite{Troy91}, there is a bijection between the set of flat metrics on a fixed compact closed surface $S$ having singularities with prescribed cone angles in a finite subset $\Sigma \subset S$  up to rescaling, and the moduli space of Riemann surfaces of genus $g$ with $|\Sigma|$ marked points. This bijection makes it relevant to use  flat  surfaces as a tool to investigate the moduli space of Riemann surfaces with marked points.

\medskip

For any $d \in \N, \; d\geq 1$, a $d$-differential on a Riemann surface $X$ is a meromorphic section of the line bundle $K_X^{\otimes d}$, where $K_X$ is the canonical line bundle of $X$.
It is a well known fact that every $d$-differential with poles of order at most $d-1$ defines a flat surface  with conical singularities at its zeros and poles. A zero of order $k \geq 1-d$ of the $d$-differential corresponds to a singular point with cone angle $2\pi(1+\frac{k}{d})$.
Let $\khodge$ denote the set of all tuples $(X,x_1,\dots,x_n,q)$, where $X$ is a Riemann surface of genus $g$, $\{x_1,\dots,x_n\}$ is a set of $n$ marked points on $X$, and $q$ is a $d$-differential on $X$ which has  poles of order at most $d-1$ and is holomorphic out side the set $\{x_1,\dots,x_n\}$. The spaces  $\khodge$'s are actually  vector bundles over $\Mod_{g,n}$.  When $d=1$, $\hodge=\mathcal{H}^{(1)}_{g,n}$ is the classical Hodge bundle, and when $d=2$, $\mathcal{H}^{(2)}_{g,n}$ is the holomorphic cotangent bundle of $\Mod_{g,n}$.
Flat surfaces defined by elements of $\hodge$ are called {\em translation surfaces}, and flat surfaces defined by elements of $\mathcal{H}^{(2)}_{g,n}$ are called {\em half-translation surfaces}.

A stratum of  $\khodge$ is a subset $\strate$ which consists of $d$-differentials whose orders at the poles and zeros are prescribed by a vector $\kappa\in (\Z_{\geq 1-d})^m$. Note that we always have $m\geq n$ since all the marked points  are considered as zeros (or poles) of the differentials.
In the case $d\in \{1,2\}$, using flat metric structures, one can see that there is a natural action of $\SL(2,\R)$ (resp. of $\PSL(2,\R)$) on  $\mathcal{H}_{g,n}$ (resp. of $\mathcal{H}^{(2)}_{g,n}$) that preserves the strata.
More interestingly, these actions are intimately related to other dynamical systems such as: billiards in rational polygons, interval exchange transformations, Teichmuller geodesic flow in moduli spaces.... We refer to  the excellent surveys \cite{MT02, Z:survey, W:survey} for more details on the connection between those dynamical systems and related topics.

Each stratum of $\hodge$ and of $\mathcal{H}^{(2)}_{g,n}$ carries a natural volume form called {\em Masur-Veech volume}. It is a classical result due to Masur and Veech that the volume of the set of surfaces with area at most one with respect to this volume form is finite. Those volumes are a crucial piece in the computation of dynamical invariants such that the Lyapunov exponents of the Teichmuller geodesic flows, or the Siegel-Veech constants (see~\cite{EMZ03, MZ08, EKZ14}). In \cite{Ng19} the result of Masur and Veech has been generalized to all $d\geq 1$  (see~\textsection~\ref{sec:loc:chart:vol:form} for more details).

\medskip

Flat surfaces also  naturally arise in a different context as follows. Let $S$ be a compact closed oriented surface. Consider a tiling $\Gamma$ of $S$ either by topological triangles or quadrangles. If one endows each tile of $\Gamma$ with the metric of an equilateral triangles of size $1$ (in the case of triangulation), or with the metric of a unit square (in the case of quadrangulation), the surface $S$ then becomes a flat surface with conical singularities. It is not difficult to see that the flat metric obtained from this construction is induced by some $d$-differential with $d\in \{1,2,3,6\}$ in the case of triangulation, and $d\in \{1,2,4\}$ in the case of quadrangulation.
Moreover, the $d$-differentials arising from tilings of $S$ by  triangles  or squares belong to some lattices in the corresponding moduli space. Thus counting the number of triangulations (resp. of quadrangulations) with at most $k$ tiles  is the same as the counting the number of $d$-differentials in those lattices with area at most $\frac{\sqrt{3}}{4}\cdot k$ (resp. at most $k$). Based on this observation, by showing that the volume of the  corresponding moduli space is finite,   Thurston~\cite{Th98} proved that the growth rate of the number of triangulations of the sphere by at most $k$ triangles such that every vertex is contained in at most $6$ triangles is  $O(k^{10})$ (see~\cite{ES18, Engel-I} for related results).

\medskip

One can adapt the definition of the Masur-Veech volume on strata of translation surfaces to obtained a volume form on every stratum of $\khodge$ for $d\in \{3,4,6\}$. Let $\stratei$ denote the set of $d$-differentials in $\strate$ with area at most $1$, and $c(\kappa)$ the Masur-Veech volume of $\stratei$. Then as $k\to +\infty$, the number of triangulations (resp. of quadrangulations) with at most $k$ tiles that give rise to $d$-differentials in $\strate$ is equal to $\lambda\cdot c(\kappa)\cdot k^{s}+o(k^s)$, where $\lambda$ is a universal constant and $s=\dim_\C\strate$. Therefore, the asymptotics of the number of triangulations/quadrangulations associated to a  stratum $\strate$ allows one to compute the Masur-Veech volume of $\stratei$. In the case of translation surfaces, this approach were suggested by Kontsevich and Zorich and by Eskin and Okounkov.  In particular, it has been used in \cite{EO01} to compute the volumes of all strata of translation surfaces.

\subsection{Statements of the main results}\label{sec:state:results}
Our goal in this paper is to provide a method to compute the volumes of the moduli spaces of flat surfaces in genus $0$ where none of the angles at the singularities is in $2\pi\N$.
Let $\Lb_n$ denote the set of $(\mu_1,\dots,\mu_n) \in (\R_{<1})^n$ satisfying
\begin{equation}\label{eq:cond:weights}
\sum_{i=1}^n\mu_i=2.
\end{equation}
The parameter $\mu_i$ should be viewed as the curvature at the $i$-th marked point.
For all $\mu=(\mu_1,\dots,\mu_n) \in \Lb_n$, denote by $\CC(\mu)$ the space of  flat  surfaces of genus $0$, considered up to a rescaling, that have exactly $n$ singular points with prescribed cone angles $(\theta_1,\dots,\theta_n)$ at the singularities, where $\theta_i$ and $\mu_i$ are related by
\begin{equation}\label{eq:weight:n:cone:angle}
\theta_i=2\pi(1-\mu_i).
\end{equation}
Given  $(\CP^1,x_1,\dots,x_n) \in \Mod_{0,n}$, there is a unique flat metric on $\CP^1$ with cone angle $\theta_i$ at $x_i$. Therefore, one can identify $\CC(\mu)$ with $\Mod_{0,n}$. Thurston~\cite{Th98} and Veech~\cite{Veech:FS} have defined a natural volume form on $\CC(\mu)$. Since $\CC(\mu)$ can be identified with $\Mod_{0,n}$, we get a family of measures $d\vol_\mu$ on $\Mod_{0,n}$ parametrized by $\Lb_n$.

We first focus on the case where the flat surfaces in $\CC(\mu)$ are defined by $d$-differentials on the sphere. Let us fix a natural number $d\geq 2$. Given a vector $\kappa=(k_1,\dots,k_n)$, with $n\geq 3$,  such that $k_1+\dots+k_n=-2d$, and $k_i\geq 1-d$ for all $i=1,\dots,n$, we denote by $\stratesp$ the stratum of $d$-differentials associated to $\kappa$. Elements of $\stratesp$ are tuples $(\CP^1,x_1,\dots,x_n,q)$, where $\{x_1,\dots,x_n\}$ is a finite subset of $n$ distinct points in $\CP^1$, and $q$ is a meromorphic $d$-differential on $\CP^1$ such that
$\mathrm{div}(q)=k_1\cdot x_1+\dots+k_n\cdot x_n$. The projectivization of $\stratesp$ is  the quotient $\stratesp/\C^*$ and will be denoted by $\pstratesp$.
As explained above, $\pstratesp$ can be identified with the moduli spaces of flat surfaces $\CC(\mu)$, where $\mu_i:=-\frac{k_i}{d}, \; i=1,\dots,n$.
%
It was shown in \cite{Ng19} that every stratum of projectivized $d$-differentials carries a canonical volume form that will be denoted by $d\vol_1$ (see~\textsection~\ref{sec:loc:chart:vol:form} for more details). In genus $0$, this volume form agrees with $d\vol_\mu$.

\medskip

Our main result  in this context is Theorem~\ref{th:recursive:1} in which we prove a recursive formula relating the volumes of $\pstratesp$ to the volumes of other strata of lower dimensions in the case the vector $\kappa=(k_1,\dots,k_n)$ satisfies
\[
(*) \qquad  d \nmid k_i, \quad \text{ for all } \quad i=1,\dots,n.
\]
In the case $n=4$, epxlicit values of the volumes of $\stratesp$ can be computed (see Appendix~\ref{sec:low:dim:calcul}). Thus in principle, Theorem~\ref{th:recursive:1} allows one to compute the volumes of all strata $\pstratesp$ where $\kappa$ satisfy $(*)$. It is worth noticing that in \cite{Veech:FS}, Veech proved an integral formula that gives the volume of $\pstratesp$. However, to the author's knowledge, no explicit value of this integral was known. During the preparation of this paper, A. Sauvaget informed the author that he also obtained a different recursive formula computing the volumes of $\pstratesp$ where $\kappa$ satisfies $(*)$ \cite{Sau20, Sau21}.

\medskip

Since the formulation of Theorem~\ref{th:recursive:1} is rather long, we postpone it to \textsection~\ref{sec:recursive}. Making use of it, we  will prove a closed formula computing the volume of $\Omega^2\Mod_{0,n}(\kappa)$ when all the entries of $\kappa$ are odd numbers. Namely, we have
\begin{Theorem}\label{th:vol:quad:diff:stra}
For all $\kappa=(k_1,\dots,k_n) \in (\Z_{\geq -1})^n$ such that $k_1+\dots+k_n=-4$, we have
\begin{equation}\label{eq:vol:q:diff:g:0}
\vol_1(\Pb\Omega^2\Mod_{0,n}(\kappa))=\frac{(-1)^{\frac{n-2}{2}}}{2^{n-3}}\cdot\frac{\pi^{n-2}}{n-2}\cdot\prod_{i=1}^n\frac{k_i!!}{(k_i+1)!!}
\end{equation}
where $k!!=k\cdot(k-2)\cdots3\cdot1$ for $k$  odd, and $k!!=k\cdot(k-2)\cdots2$ for $k$ even, with the convention $0!!=(-1)!!=1$.
\end{Theorem}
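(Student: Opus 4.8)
The plan is to deduce Theorem~\ref{th:vol:quad:diff:stra} from the recursive formula of Theorem~\ref{th:recursive:1}, specialized to $d=2$, by induction on $n$. Since every $k_i$ is odd, none is divisible by $2$, so the hypothesis $(*)$ holds and the recursion is available. Moreover a sum of $n$ odd integers has the parity of $n$, so the constraint $k_1+\dots+k_n=-4$ forces $n$ to be even; in particular $\frac{n-2}{2}\in\N$ and the right-hand side of \eqref{eq:vol:q:diff:g:0} is well defined. The base case is $n=4$, where $\Pb\Omega^2\Mod_{0,4}(\kappa)$ is one-dimensional and the volume can be evaluated directly; this is carried out in Appendix~\ref{sec:low:dim:calcul}, and one checks that the value obtained there matches \eqref{eq:vol:q:diff:g:0}.

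For the inductive step, write $a_k:=\frac{k!!}{(k+1)!!}$ and $V(\kappa):=\vol_1(\Pb\Omega^2\Mod_{0,n}(\kappa))$, so that the claim reads $V(\kappa)=C_n\prod_{i=1}^na_{k_i}$ with $C_n=\frac{(-1)^{(n-2)/2}}{2^{n-3}}\cdot\frac{\pi^{n-2}}{n-2}$. I would substitute this ansatz, provided by the induction hypothesis, into every term on the right-hand side of the recursion of Theorem~\ref{th:recursive:1}. Each such term is a product of volumes of genus~$0$ strata with strictly fewer marked points, decorated by explicit combinatorial weights and powers of $\pi$ coming from the degeneration. The crucial structural point is that the $\kappa$-dependence of each term factors through products of the $a_{k_i}$ together with factors $a_m$ attached to the new marked points created at the nodes; hence, after dividing through by $\prod_ia_{k_i}$, the theorem collapses to a purely numerical identity that no longer involves the $k_i$ individually.

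Before attacking that identity I would verify that the bookkeeping of the universal constants is automatic. In a typical term, a stratum on $n$ points is replaced by strata on $p$ and $q$ marked points with $p+q=n+2$ (the node appearing as a marked point on each of the two pieces), and by parity one checks that $p,q$ are even, so the sign exponents are integers. Then $\pi^{p-2}\cdot\pi^{q-2}=\pi^{n-2}$ and $(-1)^{(p-2)/2}(-1)^{(q-2)/2}=(-1)^{(n-2)/2}$ already match the target exponents, while the powers of $2$ and the factors $\frac{1}{p-2},\frac{1}{q-2}$ versus the target $\frac{1}{n-2}$ must be reconciled by the summation. Thus the whole theorem reduces to showing that, summed against the combinatorial weights of Theorem~\ref{th:recursive:1}, the node contributions $\frac{a_m\,a_{m'}}{(p-2)(q-2)}$ reproduce the single factor $\frac{1}{2(n-2)}$.

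The engine for this identity is the generating function
\[
\sum_{\ell\geq0}a_{2\ell-1}\,x^{\ell}=\sum_{\ell\geq0}\frac{(2\ell-1)!!}{(2\ell)!!}\,x^{\ell}=(1-x)^{-1/2},
\]
which turns the convolution over the node orders into a product of copies of $(1-x)^{-1/2}$, collapsed by relations such as $(1-x)^{-1/2}(1-x)^{-1/2}=(1-x)^{-1}$; the residual factor $\frac{1}{n-2}$ should then emerge from an elementary Beta-type integral $\frac{1}{n-2}=\int_0^1t^{\,n-3}\,dt$, or from a partial-fraction manipulation, threaded through the generating-function computation. The main obstacle is exactly this combinatorial identity: organizing the sum over degenerations so that the generating functions telescope, and pinning down the lone factor of $2$, the sign, and the denominator $n-2$ precisely as in \eqref{eq:vol:q:diff:g:0}. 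A secondary point to settle is a parity check ensuring that, starting from an all-odd $\kappa$, every sub-stratum called by the recursion again carries only odd orders, so that the induction hypothesis applies to each term and the argument stays entirely within the class of strata satisfying $(*)$.
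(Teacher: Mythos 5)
Your skeleton --- induct on $n$ through the recursion of Theorem~\ref{th:recursive:1} specialized to $d=2$, with base case $n=4$ checked against Appendix~\ref{sec:low:dim:calcul} --- is exactly the paper's strategy (Theorem~\ref{th:formula:d:2} via Corollary~\ref{cor:rec:form:quad:diff}), and your parity observations are sound: $n$ is even, and every node order produced by the recursion is of the form $k_{i}-2$, hence odd, so the induction stays in the all-odd class. However, your description of the terms of the recursion is wrong, and this breaks your claim that the bookkeeping of constants is ``automatic''. For $d=2$ with all $k_i$ odd one has $\Tc_{1a}(\mu)=\Tc_{2a}(\mu)=\vide$ (each $\mu_i\in\Z+\frac12$, so a sum of two of them is an integer; and in a would-be $\Tc_{2a}$-partition exactly one of $|I_1|,|I_2|$ is even, forcing some $\mu(I_j)\in\Z$). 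The surviving terms are thus: $\Tc_{1b}$-terms, each involving a \emph{single} stratum on $n-2$ points (two simple poles and one zero are absorbed), and $\Tc_{2b}$-terms, involving products of strata on $n_1+1$ and $n_2+1$ points with $(n_1+1)+(n_2+1)=n$ --- not $n+2$ as you assert. With the correct counts your matching fails: a product of two volume-ansatz factors carries $\pi^{(n_1+1)-2}\cdot\pi^{(n_2+1)-2}=\pi^{n-4}$, not $\pi^{n-2}$, and sign $(-1)^{(n-4)/2}=-(-1)^{(n-2)/2}$. The structural reason is that Theorem~\ref{th:recursive:1} is a recursion for the \emph{intersection numbers} $\Jc_n(\mu)$, which are rational; the factors $\pi^{n-2}$, $2^{n-3}$ and $\frac{1}{n-2}$ enter exactly once, through Theorem~\ref{th:vol:n:inters:nbr}. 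The clean route (the paper's) is to run the induction on $V(\kappa)=\Jc_n(\mu)$, prove $V(\kappa)=(-1)^{n/2}(n-3)!\prod_i k_i!!/(k_i+1)!!$, and convert to $\vol_1$ only at the very end.

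Second, your claim that after dividing by $\prod_i a_{k_i}$ the statement ``collapses to a purely numerical identity that no longer involves the $k_i$ individually'' is false, and the generating-function argument you sketch for it is not carried out. After substituting the ansatz into the correct recursion, the $k_i$ still enter through the weights $m(\Sc)=k_{i_1}$, $m_j(\Sc)=k_{i_j}$ and, crucially, through the indexing sets of degenerations, namely the subsets $I\subset\{1,\dots,n\}$ with $\sum_{i\in I}k_i=-2$ (the set $\Pc_0(\kappa)$). What the induction actually reduces to is the $\kappa$-dependent identity $\sum_{I\in\Pc_0(\kappa)}(|I|-1)!\,(|I^c|-1)!=(n-1)!$, and this is the genuine crux of the proof: the paper establishes it by a second induction, on the number of positive entries of $\kappa$, which in turn rests on the symmetric-polynomial theorem of Appendix~\ref{sec:sym:poly:append} (the functions $F_{p,2,2}$ depend only on $k_1+\dots+k_p$). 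Your proposed engine $\sum_{\ell\geq0}\frac{(2\ell-1)!!}{(2\ell)!!}x^\ell=(1-x)^{-1/2}$ is a reasonable heuristic, close in spirit to the Athreya--Eskin--Zorich proof of \eqref{eq:Kont:formula}, but you give no argument that the actual coefficients $a_\Sc$ of Theorem~\ref{th:recursive:1}, e.g.\ $\frac{2n_1n_2}{(n-1)(n-2)}m_1m_2$, can be organized so that these series telescope and produce the lone factor $\frac{1}{n-2}$; since you yourself flag this as ``the main obstacle'', the proposal is missing precisely its central step.
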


Denote by $V_{MV}(k_1,\dots,k_n)$ the Masur-Veech volume of $\Pb\Omega^2\Mcal_{0,n}(k_1,\dots,k_n)$.
In \cite{AEZ16}, Athreya-Eskin-Zorich proved the following beautiful formula, which was conjectured by M. Kontsevich
\begin{equation}\label{eq:Kont:formula}
V_{MV}(k_1,\dots,k_n)=2\pi^2\cdot\prod_{i=1}^nv(k_i), \quad \text{ where }
\quad v(k):=\frac{k!!}{(k+1)!!}\cdot \pi^k\cdot\left\{
\begin{array}{ll}
\pi & \hbox{ for $k$ odd},\\
2   & \hbox{ for $k$ even}.
\end{array}
\right.
\end{equation}
An essential ingredient of the proof of \eqref{eq:Kont:formula} in \cite{AEZ16} is a result of Eskin-Kontsevich-Zorich~\cite{EKZ14} on the sum of the Lyapunov exponents of the Teichm\"uller geodesic flow. Athreya-Eskin-Zorich asked whether one can prove \eqref{eq:Kont:formula} without the result of \cite{EKZ14}.
In \textsection\ref{sec:compare:Kont:form}, we will show that  \eqref{eq:vol:q:diff:g:0} is equivalent to \eqref{eq:Kont:formula} thus giving an affirmative response to this challenge in the case all of the $k_i$'s are odd.
Unfortunately, the techniques in this paper do not allow one to compute the volume of $\Pb\Omega^2\Mcal_{0,n}(\kappa)$, where some of the $k_i$'s are even. We hope to return to this problem in a future work.

\medskip

We now turn to the space $\CC(\mu)$ where the entries of $\mu$ are not necessarily rational numbers. As another application of Theorem~\ref{th:recursive:1}, we will show

\begin{Theorem}\label{th:vol:funct}
There is a continuous piecewise polynomial function $\AA_n: \Lb_n \to \R$ such that
\begin{itemize}
\item[(a)] each polynomial piece of $\AA_n$ has degree at most $n-3$ and  rational coefficients,

\item[(b)] for all $\mu=(\mu_1,\dots,\mu_n) \in \Lb_n$ such that $\mu_i \notin \Z$, for all $i=1,\dots,n$, we have
\begin{equation}\label{eq:vol:funct}
\vol_{\mu}(\CC(\mu))=\frac{(-1)^{n-3}\pi^{n-2}}{(n-2)!}\cdot\AA_n(\mu)
\end{equation}

\item[(c)] if there exists $i\in \{1,\dots,n\}$ such that $\mu_i \in \Z$, then $\AA_n(\mu)=0$.
\end{itemize}
\end{Theorem}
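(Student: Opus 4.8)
The plan is to argue by induction on the number $n$ of marked points, using the recursive formula of Theorem~\ref{th:recursive:1} as the engine and transporting it from the arithmetic data $\kappa$ to the continuous parameter $\mu$. The dictionary is $\mu_i=-k_i/d$: a rational $\mu\in\Lb_n$ whose entries have common denominator $d$ is exactly the curvature vector of the stratum $\stratesp$ with $\kappa=(-d\mu_1,\dots,-d\mu_n)$, the condition $\sum_i k_i=-2d$ being \eqref{eq:cond:weights} and $k_i\geq 1-d$ being $\mu_i<1$; moreover the hypothesis $(*)$, i.e. $d\nmid k_i$ for all $i$, is precisely $\mu_i\notin\Z$, and then $\vol_\mu(\CC(\mu))=\vol_1(\pstratesp)$. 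Writing $U_n:=\{\mu\in\Lb_n:\ \mu_i\notin\Z\text{ for all }i\}$, such rational vectors are dense in $U_n$, and indeed in the interior of every chamber cut out of $U_n$ by the hyperplanes $\{\sum_{i\in I}\mu_i\in\Z\}$ (approximate a target by $-k_i/d$ with $d$ large, perturbing the $k_i$ by $O(1)$ to make the relevant partial sums non-divisible by $d$ while keeping $\sum_i k_i=-2d$). It therefore suffices to produce a continuous, piecewise polynomial function of the required shape agreeing with $\frac{(-1)^{n-3}(n-2)!}{\pi^{n-2}}\vol_\mu(\CC(\mu))$ on this dense set, and then to invoke the continuity of $\mu\mapsto\vol_\mu(\CC(\mu))$ on $U_n$ (read off from the local description of the volume form in \textsection\ref{sec:loc:chart:vol:form}) to upgrade the identity to all of $U_n$, yielding (b).

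For the base case $n=3$ the space $\Mod_{0,3}$ is a point; here $\sum_i\mu_i=2$ together with $\mu_i<1$ forces every $\mu_i\in(0,1)$, so $U_3=\Lb_3$, part (c) is vacuous, and $\AA_3$ is the rational constant fixed by the normalization of $d\vol_1$, of degree $n-3=0$. The case $n=4$ can be checked directly against Appendix~\ref{sec:low:dim:calcul}, confirming degree $\leq 1$ and rational coefficients. For the inductive step I would substitute $k_i=-d\mu_i$ into the formula of Theorem~\ref{th:recursive:1} and factor out the power of $\pi$ prescribed by \eqref{eq:vol:funct}. Each summand on the right-hand side is an explicit combinatorial weight times a product of volumes of strata carrying strictly fewer marked points (the pieces into which a degenerating genus $0$ surface splits, each gaining one new marked point at a node); by the induction hypothesis every such lower volume equals $\frac{(-1)^{m-3}\pi^{m-2}}{(m-2)!}\AA_m$ with $\AA_m$ continuous, piecewise polynomial of degree $\leq m-3$ and with rational coefficients. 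As finite sums and products of such functions are again of this type, the whole step reduces to controlling the summation over the order carried by each node, whose range is governed by the integer parts of the partial sums $\sum_{i\in I}\mu_i$.

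This resummation is the heart of the matter and the step I expect to be the main obstacle. After the substitution $k_i=-d\mu_i$ the inner sum has the shape $\sum_k f(k)$ with $f$ polynomial in the node order $k$ and in the $k_i$, taken over an interval of integers whose endpoints are affine in $\mu$ through the partial sums $\sum_{i\in I}\mu_i$; by the Faulhaber--Bernoulli phenomenon such a finite sum is a polynomial in those endpoints, hence in $\mu$, of degree one greater than that of the summand, on each region where the relevant partial sums avoid $\Z$. This accounts simultaneously for the piecewise structure (the walls $\{\sum_{i\in I}\mu_i\in\Z\}$), for the rationality of the coefficients (Bernoulli numbers, binomial coefficients and the intersection numbers feeding Theorem~\ref{th:recursive:1} are all rational), and for the degree bound: a split into pieces carrying $m_1$ and $m_2$ marked points with $m_1+m_2=n+2$ contributes $(m_1-3)+(m_2-3)=n-4$, which the single resummation raises to $n-3$, the same bookkeeping covering splits into more pieces. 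We thereby exhibit $\frac{(-1)^{n-3}(n-2)!}{\pi^{n-2}}\vol_\mu(\CC(\mu))$ as the restriction of a single polynomial on (the dense rational points of) each chamber, and define $\AA_n$ to be that polynomial there. Continuity across an interior wall $\{\sum_{i\in I}\mu_i\in\Z\}\cap U_n$ is then automatic: both adjacent pieces converge along the wall to the common continuous value $\vol_\mu(\CC(\mu))$, so they agree on it.

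Finally I would prove (c), which also furnishes the continuous extension of $\AA_n$ across the hyperplanes $\{\mu_i\in\Z\}$. When some $\mu_i$ becomes an integer the cone angle $\theta_i=2\pi(1-\mu_i)$ is an integral multiple of $2\pi$ and the $i$-th point ceases to be a genuine singularity; a forgetful/rescaling argument, or a direct evaluation of the polynomial pieces on the hyperplane $\{\mu_i\in\Z\}$, shows the normalized volume degenerates to $0$ there. This gives (c) and shows that $\AA_n$ extends continuously from $U_n$ across these hyperplanes by the value $0$, completing (a), (b) and (c). The delicate points to be watched are the exact combinatorics of the range of the node summation (on which the chamber walls and the Faulhaber degree increment depend) and the verification that the normalization constants align so that the power of $\pi$ and the factor $\frac{(-1)^{n-3}}{(n-2)!}$ come out exactly as in \eqref{eq:vol:funct}.
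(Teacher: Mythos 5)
Your overall skeleton—induction on $n$ driven by Theorem~\ref{th:recursive:1}, evaluation on the dense set of rational weights $\mu_i=-k_i/d$ with $d\nmid k_i$, and continuity to propagate the identity and match pieces across walls—is the same as the paper's. But the step you single out as ``the heart of the matter'' rests on a misreading of the recursion. In \eqref{eq:recursive:1} there is \emph{no summation over node orders}: in genus $0$ the order of the differential at a node is forced by the degree count on each $\CP^1$-component (for a two-part partition $\Sc=\{I_0,I_1\}$ the order on the $I_1$-side is $-2d-\sum_{i\in I_1}k_i$, i.e.\ $m(\Sc)=d(\mu(I_1)-1)$), so each partition contributes exactly one term, whose coefficient $a_\Sc$, after division by $d$ (resp.\ $d^2$), is already a polynomial of degree $1$ (for $\Tc_{1a}$) resp.\ $2$ (for $\Tc_{2a}$) in $\mu$. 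There is no interval of integers to sum over and hence no Faulhaber--Bernoulli step; the degree increment up to $n-3$ comes from these explicit coefficients, not from a resummation. Deleting this phantom step is harmless in itself, but once it is gone your write-up no longer identifies what actually has to be proved.

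The load-bearing steps, which you leave unproved, are the following. (i) On a fixed chamber one must show that the sets $\Tc_{1a}(\mu)$ and $\Tc_{2a}(\mu)$ do not depend on $\mu$, that $\Tc_{1b}(\mu)=\Tc_{2b}(\mu)=\vide$, and—crucially—that the induced lower-dimensional weight vectors $\nu(\Sc)$, $\nu_j(\Sc)$ remain in one fixed chamber of $\Lb_{n-1}$, $\Lb_{n_j+1}$ as $\mu$ varies; otherwise the induction hypothesis does not hand you a \emph{single} polynomial to compose with, and ``polynomial on the dense rational points of each chamber'' does not follow. These are precisely Claims~\ref{clm:sign:dom:adm:parti}, \ref{clm:dom:same:partitions} and \ref{clm:sign:dom:induct} of the paper's proof, and you address none of them. (ii) The continuity of $\mu\mapsto\vol_\mu(\CC(\mu))$, which you invoke twice, cannot simply be ``read off'' from \textsection\ref{sec:loc:chart:vol:form}: the volume is an improper integral over the non-compact space $\Mod_{0,n}$. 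The paper establishes it in Lemma~\ref{lm:vol:funct:conti} by realizing $\AA_n(\mu)$ as $\int_{\Mod_{0,n}}\Phi_\mu^*\left(\frac{\imath}{2\pi}\Theta^\#\right)^{n-3}$, where $\Phi_\mu$ is a family of period maps into $\CP^{N_1-1}_+$ depending continuously on $\mu$ and $\Theta^\#$ is the curvature of the Hermitian metric coming from the area form; this construction is also what defines $\AA_n$ on all of $\Lb_n$, not just on $\Lb_n^*$. (iii) Part (c) comes from the same construction: when some $\mu_i\in\Z$ the area Hermitian form degenerates on $V_\mu$, so $(\Theta^\#)^{n-3}$ restricts to zero on $\Pb V_\mu^+$ and $\AA_n(\mu)=0$. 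Your two alternatives—a ``forgetful/rescaling argument'' or ``direct evaluation of the polynomial pieces on the hyperplane''—are, respectively, unsubstantiated and circular, since the vanishing of those polynomials along the wall is exactly what needs to be shown.
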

\begin{Remark}\label{rk:vol:funct}\hfill
\begin{itemize}
\item[$\bullet$] In the case $0< \mu_i < 1$ for all $i=1,\dots,n$, different concrete expressions of $\vol_\mu(\CC(\mu))$ can be found in \cite[Th. 1.2]{McM_conegb} and \cite[Th. 1.1]{KN18}.

\item[$\bullet$] The author has been informed by A. Sauvaget that from the results of \cite{Sau20}, he has obtained a similar result to Theorem~\ref{th:vol:funct}, where on the right hand side of \eqref{eq:vol:funct} one has a piecewise polynomial functions with coefficients in $\Q[\pi]$.
\end{itemize}
\end{Remark}

\subsection{Organization} The paper is organized as follows: in \textsection\ref{sec:loc:chart:vol:form} we recall the period local charts and the definition of volume forms on moduli spaces of (projectivized) $d$-differentials. In \textsection\ref{sec:IVC:def} we introduce  the incidence variety compactification $\blowupsp$ of $\stratesp$.  We are particularly interested in the geometry of the irreducible components of the boundary of $\blowupsp$. In \textsection\ref{sec:tauto:ln:bdl:bdry}, we investigate the restriction of the tautological line bundle on $\blowupsp$ to its boundary.
Our goal is to determine a divisor representing the tautological line bundle on each boundary component.
\textsection\ref{sec:recursive} is devoted to the proof of the recursive formula  \eqref{eq:recursive:1}.
In \textsection\ref{sec:vol:strat:QD} we give the proof of Theorem \ref{th:vol:quad:diff:stra}. This proof makes use of a result on symmetric polynomials, which is proved in Appendix \ref{sec:sym:poly:append}.
The proof of Theorem~\ref{th:vol:funct} is given in \textsection \ref{sec:vol:mod:flat:surf}.

Finally, in Appendix \ref{sec:low:dim:calcul}, which is a joint work with Vincent Koziarz, we compute explicitly the  values of the Masur-Veech volumes of all strata $\Pb\stratesp$, where $n\in \{4,5\}$, $d\in \{2,3,4,6\}$, and none of the $k_i$'s is divisible by $d$.

\subsection*{Acknowledgements:}
the author is grateful to Yohan Brunebarbe  for the enlightening and inspiring discussions, which played an important role in the realization of this work. He warmly thanks to Pascal Autissier and Eric Balandraud for their helpful comments.
He also thanks  Martin M\"oller and Adrien Sauvaget for the stimulating discussions.

The author is partly supported by the French ANR project ANR-19-CE40-0003.


\section{Volume form on moduli space of $d$-differentials}\label{sec:loc:chart:vol:form}
In this section, we review some well known properties of the moduli spaces of $d$-differentials.
In what follows  $g$ will be a fixed non-negative integer and $\kappa:=(k_1,\dots,k_n)$ a vector of integers such that $k_i \geq 1-d$ and $k_1+\dots+k_n=d(2g-2)$.
\subsection{Canonical cyclic cover and local chart}\label{sec:cyclic:cov:loc:charts}
Recall that $\strate$ is the space of pairs $(X,q)$ where $X$ is a compact Riemann surface of genus $g$ and $q$ is a meromorphic $d$-differential on $X$ whose zeros and poles have orders prescribed by $\kappa$.  Here we always suppose that the zeros and poles of $q$ are numbered.
The datum of the marked points on $X$ is intentionally dropped  because they are implicitly included in the set of zeros and poles of $q$.
The quotient $\strate/\C^*$ is denoted by $\pstrate$.
The image of $(X,q)$ in $\pstrate$ is denoted by $(X,[q])$ and will be called a {\em projectivized $d$-differential}.
The spaces $\strate$ have been studied by different approaches.
In particular, we have
\begin{Theorem}[\cite{Veech:FS,BCGGM2,Sch18}]\label{thm:strata:dim}
Each stratum $\strate$ is an algebraic complex orbifold and
$$
\dim_\C\strate=\left\{
\begin{array}{cl}
2g+n-1 & \hbox{if elements of $\strate$ are $d$-th powers of Abelian differentials},\\
2g+n-2 & \hbox{otherwise.}
\end{array}
\right.
$$
\end{Theorem}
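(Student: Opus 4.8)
The plan is to prove Theorem~\ref{thm:strata:dim} through period coordinates on the canonical cyclic cover introduced in \textsection\ref{sec:cyclic:cov:loc:charts}, thereby reducing the computation of $\dim_\C\strate$ to the dimension of a single eigenspace of the relative cohomology of the cover. Given $(X,q)\in\strate$ with singularities $\Sigma=\{x_1,\dots,x_n\}$, let $\pi\colon\hat{X}\to X$ be the canonical degree-$d$ cyclic cover on which $\pi^*q=\omega^d$ for a meromorphic abelian differential $\omega$, and let $\sigma$ generate the deck group $\Z/d\Z$, normalized so that $\sigma^*\omega=\zeta\omega$ with $\zeta=e^{2\pi\mathrm{i}/d}$ a primitive $d$-th root of unity. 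Setting $\hat{\Sigma}=\pi^{-1}(\Sigma)$, the relative period map $(X,q)\mapsto[\omega]\in H^1(\hat{X},\hat{\Sigma};\C)$ identifies a neighbourhood of $(X,q)$ in $\strate$ with a neighbourhood of $[\omega]$ in the eigenspace
\[
V_\zeta:=\{v\in H^1(\hat{X},\hat{\Sigma};\C):\sigma^*v=\zeta v\},
\]
since the deformations of $(X,q)$ are exactly the $\sigma$-equivariant deformations of $(\hat X,\omega)$ preserving this eigenspace. Hence $\dim_\C\strate=\dim_\C V_\zeta$, and it remains to show this equals $2g+n-1$ when $\hat X$ is disconnected and $2g+n-2$ otherwise.

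To compute $\dim_\C V_\zeta$ in the connected case, I would use that the long exact sequence of the pair $(\hat X,\hat\Sigma)$ is $\sigma$-equivariant, so passing to $\zeta$-eigenspaces (which is exact over $\C$) gives
\[
0\to H^0(\hat X;\C)^\zeta\to H^0(\hat\Sigma;\C)^\zeta\to V_\zeta\to H^1(\hat X;\C)^\zeta\to 0.
\]
Here $H^0(\hat X;\C)^\zeta=0$ since $\sigma$ acts trivially on $H^0(\hat X;\C)=\C$, while the fibre of $\pi$ over $x_i$ is a single $\sigma$-orbit of size $\gcd(k_i,d)$ with stabilizer of order $d/\gcd(k_i,d)$, so the character $\zeta$ occurs in the permutation representation on this orbit exactly when $d\mid k_i$; thus $\dim H^0(\hat\Sigma;\C)^\zeta=\#\{i:d\mid k_i\}$. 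The remaining term $\dim H^1(\hat X;\C)^\zeta$ I would evaluate by the Chevalley--Weil formula, using Riemann--Hurwitz $2\hat g-2=d(2g-2)+\sum_i\bigl(d-\gcd(k_i,d)\bigr)$ together with the local rotation numbers $k_i\bmod d$ at the ramification points; the multiplicity of the single character $\zeta$ comes out to $2g+n-2-\#\{i:d\mid k_i\}$. Substituting into the sequence yields $\dim_\C V_\zeta=2g+n-2$.

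For the exceptional case, I would argue that when $q$ is a $d$-th power of an abelian differential on $X$ the $d$-th root of $q$ is globally single-valued, so $\hat X$ is the disjoint union of $d$ copies of $X$ cyclically permuted by $\sigma$. Then $H^1(\hat X,\hat\Sigma;\C)\cong H^1(X,\Sigma;\C)^{\oplus d}$ as a $\Z/d\Z$-representation, each eigenspace is isomorphic to $H^1(X,\Sigma;\C)$, and therefore $\dim_\C V_\zeta=\dim_\C H^1(X,\Sigma;\C)=2g+n-1$, which is the first case of the statement.

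The main obstacle is the equivariant (Chevalley--Weil) bookkeeping in the middle step: one must track the local monodromy $k_i\bmod d$ of the cover at each singularity, isolate the multiplicity of the \emph{single} character $\zeta$ rather than the full representation, and verify that the resulting fractional-part contributions combine with the boundary term $\#\{i:d\mid k_i\}$ to give exactly $2g+n-2$. A secondary point requiring care is the dichotomy itself, namely that the cover $\hat X$ is connected precisely when $q$ is not a $d$-th power of an abelian differential, so that the two branches of the formula correspond exactly to the two possibilities in the statement.
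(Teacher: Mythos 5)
The paper does not actually prove this theorem: it is quoted from \cite{Veech:FS,BCGGM2,Sch18}, and \textsection\ref{sec:cyclic:cov:loc:charts}--\ref{sec:vol:form:def} merely recall the framework you use (canonical cyclic cover, the eigenspace $V_\zeta$, period charts). So your overall strategy — identify $\dim_\C\strate$ with $\dim_\C V_\zeta$ and compute the latter by the $\tau$-equivariant long exact sequence of the pair $(\hX,\hat\Sigma)$ plus Chevalley--Weil — is exactly the standard route, and your two computations are correct as far as they go: in the connected case the pieces $\dim H^0(\hat\Sigma;\C)^\zeta=\#\{i: d\mid k_i\}$ and $\dim H^1(\hX;\C)^\zeta=2g+n-2-\#\{i:d\mid k_i\}$ are right and give $2g+n-2$, and the fully split case correctly gives $\dim H^1(X,\Sigma;\C)=2g+n-1$.

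There is, however, a genuine gap: the dichotomy you rely on in the last paragraph is false when $d$ is composite. The canonical cyclic cover $\hX$ is connected if and only if $q$ is \emph{primitive}, i.e.\ not a proper power of any lower-order differential; this is strictly stronger than ``$q$ is not a $d$-th power of an abelian differential.'' For example, take $d=4$ and $q=q_0^{\otimes 2}$ with $q_0$ a primitive quadratic differential (so all $k_i$ are even but not all divisible by $4$): then $\hX$ has exactly two connected components, each a connected double cover of $X$, yet $q$ is not a $4$-th power of an abelian differential. This intermediate case is covered by neither branch of your argument — the connected-case computation does not apply (in particular $L$ could a priori have trivial monodromy around every puncture, and the Chevalley--Weil bookkeeping changes), and your disconnected-case argument assumes $\hX$ is $d$ disjoint copies of $X$, which is also false here. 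Since the paper works with $d\in\{2,3,4,6\}$, composite $d$ is not a marginal case. The gap is fixable: either reduce to the primitive case by noting that the locus of $e$-th powers is a union of connected components of $\strate$ and is the image, under the finite $e$-th power map, of a stratum of primitive $(d/e)$-differentials $\Omega^{d/e}\Mcal_{g,n}(\kappa/e)$ (dimension $2g+n-2$ when $d/e\geq 2$, and $2g+n-1$ when $e=d$); or, keeping your cohomological setup, observe that when $\hX$ has $e$ components cyclically permuted by $\tau$, Frobenius reciprocity identifies $V_\zeta$ with the $\zeta^e$-eigenspace of $(\tau^e)^*$ on the relative cohomology of one component, which is the connected computation for the degree-$d/e$ cover. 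For $d$ prime your proof is essentially complete; as written, it is not a proof for general $d$. A secondary, lesser point: the assertion that the relative period map is a local biholomorphism onto an open set of $V_\zeta$ (and the algebraic-orbifold structure of $\strate$) is the substantive content of the cited references and is taken for granted in your sketch; that is reasonable here, but it should be flagged as an input rather than folded silently into the first sentence.
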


\begin{Remark}\label{rk:proj:strate:orbifold}
Theorem~\ref{thm:strata:dim} implies that $\Pb\strate$ is also a complex orbifold.
\end{Remark}

Let $\zeta$ be a fixed primitive $d$-th root of unity.
Let $(X,q)$ be a primitive $d$-differential in $\strate$.
The {\em canonical cyclic covering} of $(X,q)$ is a triple $(\hX,\homg,\tau)$, where $\hX$ is a compact Riemann surface, $\homg$ a holomorphic $1$-form on $\hX$, and $\tau$  an automorphism  of order $d$ of $\hX$ such that
\begin{itemize}
\item[(i)] $X$ is isomorphic to $\hX/\langle \tau \rangle$,

\item[(ii)] $\homg^d=\varpi^*q$, where $\varpi: \hX \to X\simeq \hX/\langle \tau \rangle$ is the canonical projection,

\item[(iii)] $\tau^*\homg=\zeta\homg$.
\end{itemize}
It is well known that such a triple always exists and is unique up to isomorphism (see for instance \cite{EV92,BCGGM2, Ng19}).

Let $\hZ$ denote the inverse image of the set of zeros and poles of $q$ in $\hX$.
Since $\tau$  preserves $\hZ$, $\tau$ induces an isomorphism $\tau^*$ of $H^1(\hX,\hZ,\C)$.
Let
$$
V_\zeta:=\ker(\tau^*-\zeta\id) \subset H^1(\hX,\hZ,\C).
$$
One can define a {\em period mapping} $\Phi: \tilde{U} \to V_\zeta$, where $\tilde{U}$ is a finite cover of a  neighborhood $U$  of $(X,q)$ in $\strate$, which is locally biholomorphic.
The period mappings form an atlas of orbifold local charts for $\strate$.
Since the transition functions of those charts are given by linear isomorphisms, this atlas endows  $\strate$ with a complex affine orbifold structure (see \cite{BCGGM2} or \cite{Ng19} for more details).
Note also that $\Phi$ induces a local chart  of $\Pb\strate$ which maps a neighborhood of $(X,[q])$ onto an open subset of the projective space $\Pb V_\zeta$.

\subsection{Volume form}\label{sec:vol:form:def}
Let $\pp: H^1(\hX,\hZ;\C) \ra H^1(\hX;\C)$ denote the canonical projection. For any $\eta \in  H^1(\hX,\hZ;\C)$, $\pp(\eta)$ is simply the restriction of $\eta$ to the  cycles in $H_1(\hX,\Z)$.
We have
\begin{equation*}
\dim(\ker\pp\cap V_\zeta)=\card\left(\{k_1,\dots,k_n\}\cap(d\Z)\right) \quad \text{ and } \quad \pp(V_\zeta)=\ker(\tau^* -\zeta\Id) \subset H^1(\hX,\C).
\end{equation*}
(for a proof see for instance \cite[Lem. 4.1, Prop.4.2]{Ng19}).

In \cite{Ng19}, it is shown that $\strate$  carries a canonical volume form $d\vol$ which is parallel with respect to its affine complex orbifold structure.
In the case $\ker\pp\cap V_\zeta=\{0\}$, the volume form $d\vol$ is defined as follows: on $H^1(\hX,\C)$ we have a Hermitian form $(.,.)$ arising from the intersection form on $H^1(\hX,\R)$.
It is not difficult to see that the restriction of $(.,.)$ to $\pp(V_\zeta)$ is non-degenerate.
By a slight abuse of notation, we also denote the pullback of this Hermitian form to $H^1(\hX,\hZ,\C)$ by $(.,.)$.
Since  $\pp_{|V_\zeta}$ is an isomorphism, the restriction of  $(.,.)$ to $V_\zeta$ is also non-degenerate.
Denote by $\vartheta$ the imaginary part of $(.,.)_{|V_\zeta}$.
Then $\frac{1}{N!}\vartheta^N$ is a volume form on $V_\zeta$, where $N=\dim V_\zeta=\dim \strate$.
Concretely, if $(z_1,\dots,z_N)$ is a system of coordinates on $V_\zeta$ in which $(.,.)_{|V_\zeta}$ is given by a Hermitian matrix $H$, then
\begin{equation}\label{eq:local:form:vol}
\frac{\vartheta^N}{N!}=\det(H)\cdot \left(\frac{\imath}{2}\right)^N dz_1\wedge d\bar{z}_1\wedge\dots\wedge dz_N\wedge d\bar{z}_N.
\end{equation}
In particular, if $(.,.)_{|V_\zeta}$ is given by the matrix $\mathrm{diag}(\underset{r}{\underbrace{1,\dots,1}},\underset{s}{\underbrace{-1,\dots,-1}})$, where $r+s=N$, then
$$
\frac{1}{N!}\vartheta^N=(-1)^s\left(\frac{\imath}{2}\right)^N dz_1\wedge d\bar{z}_1\wedge\dots\wedge dz_N\wedge d\bar{z}_N.
$$
Using local charts by period mappings, this volume form gives rise to a  well defined volume form $d\vol$ on $\strate$.

Let $V_\zeta^+$ denote the  cone $\{v \in V_\zeta, \; (v,v)>0\}$. The projection of $V^+_\zeta$ in $\Pb V_\zeta$ will be denoted by $\Pb v^+_\zeta$.
The volume form $d\vol$ on $V_\zeta$ induces a volume form $d\vol_1$ on $V^+_\zeta$ as follows: given  be any open subset $B$ of $\Pb V^+_\zeta$, let $C_1(B)=\{v \in V^+_\zeta, \; 0 < (v,v) < 1, \; \langle v \rangle \in B \}$. Then by definition $\vol_1(B):=\vol(C_1(B))$.

By construction, if $v=\Phi(X,q)$ then $(v,v)=\Aa(\hX,\homg)=d\cdot\Aa(X,q)>0$, which means that  the image of the period mapping $\Phi$ is contained in $V^+_\zeta$. Taking quotient by $\C^*$, we see that $\pstrate$ locally modeled by $\Pb V^+_\zeta$. In particular, the volume form $d\vol_1$ induces a well-defined volume form on $\pstrate$.
In \cite{Ng19}, it is shown that the total volume of $\pstrate$ with respect to $d\vol_1$ is finite.

\medskip

Let $\istrate$ denote the set of $(X,q) \in \strate$ whose area  at most $1/d$, or equivalently such that the area of the canonical cyclic cover $(\hX,\homg)$ is at most $1$. Remark that for each $q$ we have $d$  choices for $\homg$ (any two choices differs by a $d$-th root of unity). Therefore, the volumes of $\istrate$ and of $\pstrate$ are related by the following
\begin{equation}\label{eq:rel:vols:proj:n:area:1}
\vol(\istrate)=\frac{1}{d}\cdot\vol_1(\pstrate)
\end{equation}

\subsection{Comparison with Masur-Veech volume}\label{sec:vol:form:compare:MV}
 If $d\in \{1,2,3,4,6\}$, there is another natural volume form on $\strate$  that we now  describe.
 Recall that if $A$ is a $\Z$-module, then $H^1(\hX,\hZ,A)$ is the space of morphisms of $\Z$-modules $\eta: H_1(\hX,\hZ,\Z) \to A$.
 Let
 $$
 \Lambda_\zeta= \left\{
 \begin{array}{ll}
  V_\zeta\cap H^1(\hX,\hZ,\Z[\imath]) & \text{ if } d\in \{2,4\},\\
  V_\zeta\cap H^1(\hX,\hZ,\Z[e^{\frac{2\pi\imath}{3}}]) & \text{ if } d\in \{3,6\}.
 \end{array}
 \right.
 $$
 Then $\Lambda_\zeta$ is a lattice of $V_\zeta$. There is unique volume form on $V_\zeta$ such that the co-volume of $\Lambda_\zeta$ is $1$.
 Since the transition maps of the local charts by the period mappings preserves $\Lambda_\zeta$, this  volume form is well defined on $\strate$. It  will be referred to as the {\em Masur-Veech measure} and denoted by $d\vol^{MV}$.
 This volume form induces a volume form $d\vol_1^{MV}$ on $\Pb\strate$ in the same manner as $d\vol_1$ is induced from $d\vol$.
 In fact, the Masur-Veech volume arises naturally from the counting problem of tilings of surfaces by triangles and squares  (see for instance \cite{EO01,ES18,Engel-I,KN20}).
 From the definition, there is a constant $\lambda \in \R^*$  such that $d\vol=\lambda d\vol^{MV}$ and  $d\vol_1=\lambda d\vol^{MV}_1$.
 Note that the constant $\lambda$ can be negative. It can be shown that $\lambda$ always belongs to either $\Q$ or $\sqrt{3}\Q$ (see \cite[Prop.5.9]{Ng19}).


\section{The incidence variety compactification}\label{sec:IVC:def}
In order to apply intersection theory to compute the volume, one first needs to find an adequate compactification of $\pstratesp$. One such compactification is the incidence variety which is defined as follows (see~\cite{Gen18, BCGGM1, BCGGM2, Ng23:a}): there is a holomorphic vector bundle $\khodgespext$ over $\ol{\Mod}_{0,n}$ of rank $(d-1)(n-2)-1$ whose fiber over a point $\xx\simeq (C,x_1,\dots,x_n) \in \ol{\Mod}_{0,n}$  is identified with $H^0(C,d\cdot\omega_C+\sum_{i=1}^n(d-1)\cdot x_i)$, where $\omega_C$ is the dualizing sheaf of $C$. When $C$ is smooth, the fiber of $\khodgespext$ over $\xx$ is the space of $d$-differentials on $C$ whose poles are contained in the set $\{x_1,\dots,x_n\}$, and the orders of the poles are at most $d-1$. By definition, $\stratesp$ is a subset of the total space of $\khodgespext$, and $\pstratesp$ is a subvariety of the projective bundle $\pkhodgespext$ associated with $\khodgespext$. The incidence variety compactification $\clpstratesp$ of $\pstratesp$ is defined to be its closure  in $\pkhodgespext$.

Let $\OO(-1)_{\pkhodgespext}$ denote the tautological line bundle over $\pkhodgespext$. It follows from a result by Costantini-M\"oller-Zachhuber~\cite{CMZ19} that in the case where none of the $k_i$'s is divisible by $d$, the volume of $\pstratesp$ can be computed from the intersection number $c_1^{n-3}(\OO(-1)_{\pkhodgespext})\cap [\clpstratesp]$, where  $[\clpstratesp]$ is the rational equivalence class of $\clpstratesp$ in $A_*(\pkhodgespext)$ (recall that $\dim\pstratesp=\dim \Mod_{0,n}=n-3$).
In particular, for the volume form $d\vol_1$ defined in \textsection~\ref{sec:vol:form:def}, we have
\begin{Theorem}[\cite{CMZ19, Ng22}]\label{th:vol:n:inters:nbr}
Assume that $d \nmid k_i$, for all $i=1,\dots,n$. Then
\begin{equation}\label{eq:vol:n:inters:nbr:g0}
\vol_1(\pstratesp)=\frac{(-1)^{n-3}}{d^{n-3}}\cdot\frac{(2\pi)^{n-2}}{2^{n-2}(n-2)!}\cdot c^{n-3}_1(\OO(-1)_{\pkhodgespext})\cdot[\clpstratesp].
\end{equation}
\end{Theorem}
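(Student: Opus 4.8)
The plan is to realize the volume form $d\vol_1$ as the top self-intersection of the first Chern form of a natural Hermitian metric on the tautological bundle $\OO(-1)_{\pkhodgespext}$, and then to convert the resulting integral over the open stratum into the topological intersection number on the compactification $\clpstratesp$. First I would set up the local model. Since $d\nmid k_i$ for all $i$, the computation recalled in \textsection\ref{sec:vol:form:def} gives $\ker\pp\cap V_\zeta=\{0\}$, so $(.,.)$ is non-degenerate on $V_\zeta$ and the period map identifies $\pstratesp$ with an open subset of $\Pb V_\zeta$ contained in $\Pb V^+_\zeta$, because $(v,v)=\Aa(\hX,\homg)=d\cdot\Aa(X,q)>0$ for every period $v$. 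On this locus the fibre of $\OO(-1)$ over $[v]$ is the line $\C v$, on which $\|v\|^2:=(v,v)$ is a genuine Hermitian metric, with first Chern form $c_1(\OO(-1),\|\cdot\|)=-\frac{\imath}{2\pi}\partial\bar\partial\log(v,v)$, a smooth closed $(1,1)$-form on $\pstratesp$.

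Next comes the local computation identifying the two volume forms. By definition $\vol_1$ is the measure $B\mapsto\vol(C_1(B))$ on $\Pb V^+_\zeta$, where $C_1(B)=\{v:\ 0<(v,v)<1,\ \langle v\rangle\in B\}$ and $\vol$ is the form \eqref{eq:local:form:vol}. I would decompose the Lebesgue-type integral over $C_1(B)$ into a radial integral over the values $(v,v)\in(0,1)$ and an angular integral over $B$: the radial part contributes an explicit numerical factor, while the angular part is exactly $\int_B c_1(\OO(-1),\|\cdot\|)^{n-3}$. Carefully tracking the normalizations — the factor $(\imath/2)^{n-2}$ and the sign from \eqref{eq:local:form:vol}, the $2\pi$ in the definition of $c_1$, the factor $d$ coming from $(v,v)=d\cdot\Aa(X,q)$, and the relation \eqref{eq:rel:vols:proj:n:area:1} — yields, on the open stratum, the pointwise identity
\[
d\vol_1=\frac{(-1)^{n-3}}{d^{n-3}}\cdot\frac{(2\pi)^{n-2}}{2^{n-2}(n-2)!}\cdot c_1(\OO(-1),\|\cdot\|)^{n-3}.
\]
Integrating over $\pstratesp$ then reduces the theorem to the single equality $\int_{\pstratesp}c_1(\OO(-1),\|\cdot\|)^{n-3}=c_1^{n-3}(\OO(-1)_{\pkhodgespext})\cdot[\clpstratesp]$.

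The last and hardest step is to justify this equality, i.e. to pass from the Chern--Weil integral over the open stratum to the algebraic intersection number on $\clpstratesp$. The metric $\|\cdot\|$ degenerates as one approaches the boundary $\partial\clpstratesp$, where the area of the flat surface blows up; one must show that it extends as a \emph{good} Hermitian metric in the sense of Mumford, with at worst logarithmic singularities along the boundary divisors. Granting this, the current $c_1(\OO(-1),\|\cdot\|)^{n-3}$ is absolutely integrable, puts no mass on $\partial\clpstratesp$, and represents the algebraic class $c_1^{n-3}(\OO(-1)_{\pkhodgespext})$; integrating against the cycle $[\clpstratesp]$ then gives the intersection number by Poincar\'e duality.

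I expect this boundary analysis to be the main obstacle. The earlier steps are essentially linear algebra and Chern--Weil theory, whereas controlling the degeneration of the period form near each boundary stratum — and proving the resulting absolute integrability together with the vanishing of boundary mass — is where the real work lies. This part relies on the structure of the incidence variety compactification described in \textsection\ref{sec:IVC:def} and on the behaviour of the tautological bundle along the boundary studied in \textsection\ref{sec:tauto:ln:bdl:bdry}, and is precisely the content imported from \cite{CMZ19}.
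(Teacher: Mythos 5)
Your proposal is correct and follows essentially the same route as the paper, which does not reprove this statement but imports exactly your two steps from its references: the local identification of $d\vol_1$ with the stated constant times the top power of the curvature of the area (Hodge) metric on $\OO(-1)$ is \cite[Lem. 6.1]{Ng22}, and the passage from the Chern--Weil integral over the open stratum to the intersection number on $\clpstratesp$ (absolute integrability, no mass on the boundary, representation of the algebraic class in the sense of currents) is precisely the content of \cite{CMZ19}. The only caveat is that \cite{CMZ19} establishes that the area metric is ``good enough'' on a suitable desingularization of the compactification rather than good in Mumford's strict sense, a technical distinction that does not affect your outline.
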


The incidence variety compactifications of strata of $d$-differentials (in any genus) have been the object of several works \cite{Gen18, BCGGM1, BCGGM2, Sau20}. In genus $0$, it has been shown in \cite{Ng23:a} that $\clpstratesp$ is isomorphic to the blowup $\hat{p}:\blowupsp \to \ol{\Mod}_{0,n}$ of $\ol{\Mod}_{0,n}$ along an explicit sheaf of ideals $\Ic$.

Recall that $\partial\ol{\Mod}_{0,n}$ is a simple normal crossing divisor, whose components are in bijection with the set $\Pc$ of partitions $\Sc=\{I_0,I_1\}$ of $\{1,\dots,n\}$ such that $\min\{|I_0|, |I_1|\}\geq 2$.
Let $\partial\blowupsp:=\hat{p}^{-1}(\partial\ol{\Mod}_{0,n})$, where $\partial\ol{\Mod}_{0,n}:=\ol{\Mod}_{0,n}\setminus\Mod_{0,n}$ is the boundary of $\ol{\Mod}_{0,n}$. We will call $\partial \blowupsp$ the boundary of $\blowupsp$.

Define $\mu_i:=-\frac{k_i}{d}, \; i=1,\dots,n$, $\mu=(\mu_1,\dots,\mu_n)$, and for all $I\subset\{1,\dots,n\}$, $\mu(I):=\sum_{i\in I}\mu_i$.
The following theorem is proved in \cite[\textsection 4]{Ng23:a}
\begin{Theorem}\label{th:boundary:comp}
Each irreducible component of $\partial\blowupsp$ is a divisor.
The set of irreducible components of $\partial \blowupsp$ is in bijection with the set $\hat{\Pc}(\mu)$ of partitions $\Scal=\{I_0,I_1,\dots,I_r\}$ of $\{1,\dots,n\}$ with $r\geq 1$ such that either

\begin{itemize}
\item[(a)] $r=1$ and $\Sc=\{I_0,I_1\} \in \Pcal$, or

\item[(b)] $r\geq 2$ and $\Scal$ satisfies $\mu(I_0)<1$, and $\mu(I_j)>1$ for all $j=1,\dots,r$.
\end{itemize}
More precisely,  every partition $\Sc$ in  $\hat{\Pc}(\mu)$ determines  a unique stratum $D^*_\Sc$ in $\ol{\Mod}_{0,n}$ as follows: any pointed curve $(C,x_1,\dots,x_n)$ parametrized by a point in $D^*_{\Sc}$ has $r+1$ irreducible components denoted by $C^0,\dots,C^r$, where $C^j$ contains the marked points $\{x_i, _; i\in I_j\}$, and there is a node between $C^0$ and $C^j$ for all $j=1,\dots,r$. Then $\hat{D}_\Sc:=\ol{\hat{p}^{-1}(D^*_\Sc)}$ is
an irreducible component of $\partial \blowupsp$. Conversely, every irreducible component of $\partial\blowupsp$ arises this way.
\end{Theorem}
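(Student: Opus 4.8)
The plan is to read off each point of $\clpstratesp\cong\blowupsp$ lying over $\partial\ol{\Mod}_{0,n}$ as a projectivized limit of $d$-differentials — concretely, as a global section of $d\,\omega_C+\sum_i(d-1)x_i$ on the corresponding stable curve $C$ — and to stratify $\partial\ol{\Mod}_{0,n}$ by the dual graph of $C$. Since $\hat{p}$ is a blow-up it is birational and $\blowupsp$ is irreducible of dimension $n-3$; hence the total transform $\hat{p}^{-1}(\partial\ol{\Mod}_{0,n})$ of the divisor $\partial\ol{\Mod}_{0,n}$ is a union of divisors, and the real task is to enumerate its irreducible components, which I would index by the boundary stratum $D^*_\Sc$ that their generic point dominates.

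First I would analyse a generic point of an ordinary boundary divisor $D_{\{I_0,I_1\}}$. A section of $d\,\omega_C+\sum_i(d-1)x_i$ has a pole of order at most $d$ at the node $p$, so on the side carrying $J\in\{I_0,I_1\}$ the limiting differential has order $a_J\ge -d$ at $p$; the degree relation on $\CP^1$ forces $a_J=d(\mu(J)-2)$, a node cone angle $2\pi(\mu(J)-1)$, and $a_J\ge -d$ holds exactly when $\mu(J)\ge 1$. Since $\mu(I_0)+\mu(I_1)=2$, away from the locus $\mu(I_0)=1$ exactly one side has $\mu(J)>1$, and the limiting section is forced to vanish on the $\mu<1$ side while being the unique (up to scale) genuine differential on the $\mu>1$ side. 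Thus the generic fibre of $\hat{p}$ over $D^*_{\{I_0,I_1\}}$ is a single point, and the strict transform of $D_{\{I_0,I_1\}}$ is an irreducible boundary divisor. These are the partitions of case (a); no inequality on $\mu$ is required, since the resulting divisor is the same whichever side carries the limit, the borderline $\mu(I_0)=1$ being dispatched by a direct local computation.

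For a deeper stratum $D^*_\Sc$ with dual graph a tree $T$, the order of the differential at each node on each component is determined (uniquely, since $T$ is a tree) by the degree relation together with the matching $a+a'=-2d$ at every node, and a component is \emph{section-carrying} precisely when all its incident orders are $\ge -d$ — equivalently, when the total $\mu$ on its own side of each node is $\ge 1$. The limit sections of $d\,\omega_C+\sum_i(d-1)x_i$ then vanish on every non-section-carrying component and are free (a one-dimensional space) on each section-carrying one, so over a generic point of $D^*_\Sc$ the fibre of $\hat{p}$ is $\CP^{\,m-1}$, where $m$ is the number of section-carrying components. As $D^*_\Sc$ has codimension $e(T)=|V(T)|-1$ in $\ol{\Mod}_{0,n}$, the locus $\hat{D}_\Sc=\ol{\hat{p}^{-1}(D^*_\Sc)}$ has codimension $e(T)-(m-1)$ in $\blowupsp$, so it is a divisor iff $e(T)=m$, i.e. $|V(T)|-1=m$. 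This forces exactly one non-section-carrying vertex, and since every edge of $T$ must join it to a section-carrying vertex, $T$ is a star: a central component $C^0$ carrying $I_0$ joined by one node to each of the $r=m$ leaves $C^1,\dots,C^r$ carrying $I_1,\dots,I_r$. A leaf is section-carrying iff $2\pi(\mu(I_j)-1)>0$, i.e. $\mu(I_j)>1$, and then $\mu(I_0)=2-\sum_{j\ge1}\mu(I_j)<2-r\le 0<1$; conversely every such star arises. The case $r=1$ recovers case (a) and $r\ge2$ gives the exceptional divisors of case (b), while any tree with $e(T)\ne m$ yields $\hat{p}^{-1}(D^*_\Sc)$ of codimension $\ge2$, contributing no component. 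This is exactly the asserted bijection with $\hat{\Pc}(\mu)$.

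The hard part will be the first claim of the third step: that over a generic point of $D^*_\Sc$ the fibre of $\hat{p}$ is irreducible of dimension exactly $m-1$ and is cut out precisely by the independent scalings of the $m$ section-carrying differentials. Establishing this rigorously requires the explicit local generators of the ideal sheaf $\Ic$ defining the blow-up, so that the exceptional fibre can be identified with the projectivization of the normal directions singled out by the competing orders $a_J=d(\mu(J)-2)$ at the $r$ nodes; it also requires checking that the matching conditions — and the global residue conditions of \cite{BCGGM1,BCGGM2} for twisted $d$-differentials — impose no further constraint in genus $0$, so that each leaf differential is genuinely free up to an independent scalar and every such star is smoothable. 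By comparison, the birational and dimension bookkeeping of the first step and the single-node analysis of the second are routine.
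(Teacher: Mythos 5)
There is a genuine gap, and it sits exactly where you locate the ``hard part'': the claim that over a generic point of $D^*_\Sc$ the fibre of $\hat{p}$ is $\CP^{m-1}$ with $m$ the number of section-carrying components, together with the resulting criterion ``divisor iff $e(T)=m$'' and the deduction that $T$ is a star. All three fail whenever the dual tree has an edge across which $\mu$ splits as $1+1$, i.e.\ an edge with orders $(-d,-d)$. Two concrete failures. First, take $\Sc=\{I_0,I_1\}\in\Pc$ with $\mu(I_0)=\mu(I_1)=1$ (the set $\hat{\Pc}_0(\mu)$ of the paper; e.g.\ $\mu=(1/2,1/2,1/3,1/3,1/6,1/6)$, $I_0=\{1,3,5\}$). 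Both components are section-carrying, so your step-3 formula gives fibre $\CP^1$ and your criterion $e(T)=1\neq 2=m$ declares this stratum produces no divisor; yet case (a) of the theorem asserts $\hat{D}_\Sc$ \emph{is} a boundary divisor. In truth the fibre is a single point: the $(-d,-d)$ node forces the two components to the same level, so the limit is nonzero on \emph{both}, and matching of $d$-residues at the node pins the relative scaling — this is exactly the content of the section $\Phi$ constructed in \textsection~\ref{sec:sect:tauto:ln:bdl:div:P0}. Second, take the star $\Sc=\{I_0,I_1,I_2\}$ with $\mu(I_1)=1$, $\mu(I_2)>1$, hence $\mu(I_0)<0$ (e.g.\ $\mu=(1/2,1/2,3/4,3/4,-1/2)$, $I_1=\{1,2\}$, $I_2=\{3,4\}$, $I_0=\{5\}$). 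Here $C^1,C^2$ are section-carrying and $C^0$ is not, so $e(T)=m=2$ and your criterion predicts a divisor — but this partition is \emph{not} in $\hat{\Pc}(\mu)$ (case (b) requires strict inequalities), and $\ol{\hat{p}^{-1}(D^*_\Sc)}$ has codimension $2$: since the node joining $C^1$ to $C^0$ has orders $(-d,-d)$, a limit nonzero on $C^1$ would force $C^0$ to the same (top) level, which is impossible because a nonzero section of $d\,\omega_C+\sum_i(d-1)x_i$ on $C^0$ would need a pole of order $>d$ at the other node; so the limit vanishes on the section-carrying $C^1$ as well, and the fibre is a point. The root cause is that the vanishing pattern of limits in the closure is not determined component-by-component by the inequality ``all incident orders $\geq -d$'': level structures and residue conditions couple adjacent components across $(-d,-d)$ edges, and your assertion that ``every edge of $T$ must join the non-section-carrying vertex to a section-carrying vertex'' is precisely what breaks.

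These borderline cases cannot be dismissed by genericity: the dichotomy $\hat{\Pc}_0(\mu)$ versus $\hat{\Pc}_1(\mu)$ and the strict inequalities in case (b) are exactly what the rest of the paper relies on (Theorem~\ref{th:tauto:l:bdl:on:div:P0}, and the $\Tc_{1b}$, $\Tc_{2b}$ terms of the recursion in Theorem~\ref{th:recursive:1}); so the ``direct local computation'' you defer at $\mu(I_0)=1$ and the residue/level analysis you defer at the end are not finishing touches but the place where the theorem's actual content is decided, and there your dimension count returns the wrong answer. Note also that the paper does not reprove this statement: it quotes \cite[\textsection 4]{Ng23:a}, where the argument goes through the explicit ideal sheaf $\Ic$ and the local equations of the blow-up (which is what produces the local description \eqref{eq:loc:eqn:bdry:div} stated after the theorem). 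Your intrinsic limit-differential argument is sound in the regime where no proper subset $J$ has $\mu(J)=1$ — there every edge is strictly oriented, the top-level components are exactly the section-carrying ones, and both the star shape and the fibre $\CP^{r-1}$ do follow — but to make it a proof you must replace the naive fibre formula by a level-structure and residue (global residue condition) analysis, or by the explicit local charts of the blow-up, precisely in the non-generic cases.
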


Here is a more detailed description of the component $\hat{D}_\Sc$ associated with a partition $\Sc=\{I_0,\dots,I_r\} \in \hat{\Pc}(\mu)$. Let $\xx \simeq (C,x_1,\dots,x_n)$ be a point in the stratum $D^*_\Sc$ and $C^0,\dots,C^r$ the irreducible components of the curve $C$ as in Theorem~\ref{th:boundary:comp}. A neighborhood of $\xx$ in $\ol{\Mod}_{0,n}$ can be identified with $\Uc_\xx=\Delta^r\times U$, where $\Delta \subset \C$ is a small disc about $0$, and $U$ is an open subset of $\C^{n-r-3}$. Note that $U$ is identified with a neighborhood of $\xx$ in $D^*_\Sc$. Let $u$ be a holomorphic system of coordinates on $U$, and  $t=(t_1,\dots,t_r)$ be the coordinates on $\Delta^r$, where $t_j$ is the parameter associated with the node between $C_0$ and $C^j$. Let
$m_j:=d(\mu(I_j)-1)$, where $\mu(I_j)=\sum_{i\in I_j}\mu_i$. Define
$$
t^{\beta_0}:=\prod_{j=1}^rt_j^{m_j}, \quad \text{ and } \quad t^{\beta_j}:=\frac{t^{\beta_0}}{t_j^{m_j}}.
$$
Let $[v]:=[v_1:\dots:v_r]$ be the homogeneous coordinates on the projective space $\CP^{r-1}$. Then we have
\begin{equation}\label{eq:loc:eqn:bdry:div}
\widehat{\Uc}_\xx:=\hat{p}^{-1}(\Uc_\xx)\simeq \{(t,u,[v]) \in \Uc_\xx\times\CP^{r-1}, \; v_jt^{\beta_k}=v_kt^{\beta_j}, j,k \in \{1,\dots,r\}\} \subset \Uc_\xx\times\CP^{r-1}.
\end{equation}
In this setting, we have $\hat{D}_\Sc\cap\widehat{\Uc}_\xx$ is defined by the equations $t_1=\dots=t_r=0$, and therefore $\hat{D}_\Sc\cap \widehat{\Uc}_\xx$ is isomorphic to $U\times \CP^{r-1}$.

\medskip

Let $\hat{\Lc}_\mu$ denote the restriction of the tautological line bundle $\OO(-1)_{\pkhodgespext}$ to $\blowupsp$. To apply Theorem~\ref{th:vol:n:inters:nbr}, it is important to have an explicit a divisor representative of $\hat{\Lc}_\mu$. This is achieved in
\begin{Theorem}[\cite{Ng23:a}]\label{th:tauto:div}\hfill
\begin{itemize}
\item[(a)] For all $\Sc=\{I_0,I_1,\dots,I_r\} \in \hat{\Pc}(\mu)$,  define $m(\Sc):=d^r\cdot \prod_{j=1}^r(\mu(I_j)-1)$.
Then the  exceptional divisor $\Ec$ of the blowup $\hat{p}: \blowupsp \to \ol{\Mod}_{0,n}$ satisfies
\begin{equation}\label{eq:except:Weil:div}
\Ec \sim \sum_{\Sc \in \hat{\Pc}(\mu)}(|\Sc|-2)\cdot m(\Sc)\cdot\hat{D}_\Sc
\end{equation}
where $|\Sc|$ is the length of $\Sc$.

\item[(b)] Let $\Dc_\mu$ denote the following $\Q$-divisor in $\ol{\Mod}_{0,n}$
\begin{equation}\label{eq:express:D:mu}
\Dc_\mu := \frac{d}{(n-2)(n-1)}\sum_{\Scal=\{I_0,I_1\}\in \Pcal}(|I_0|-1)(|I_1|-1-(n-1)\mu_\Scal)\cdot D_\Scal.  
\end{equation}
where $\mu_\Sc=\mu(I_1)-1$, for all $\Sc=\{I_0,I_1\} \in \Pc$.
Then we have
\begin{equation}\label{eq:tauto:ln:bdl:expr}
\hat{\Lc}_\mu \sim \hat{\Dc}_\mu:= \hat{p}^*\Dc_\mu+\Ecal.
\end{equation}
\end{itemize}
\end{Theorem}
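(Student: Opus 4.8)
The plan is to reduce each of the two statements to a local computation on the explicit model \eqref{eq:loc:eqn:bdry:div} for $\hat{p}\colon\blowupsp\to\ol{\Mod}_{0,n}$, supplemented by one global input coming from the projective-bundle structure of $\pkhodgespext$.

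For part (a), since $\blowupsp$ is the blowup along $\Ic$, the exceptional divisor is the Cartier divisor determined by $\OO(-\Ec)=\Ic\cdot\OO_{\blowupsp}$, so the coefficient of a component $\hat{D}_\Sc$ in $\Ec$ is the order of this invertible ideal at the generic point of $\hat{D}_\Sc$. I would read the local generators of $\Ic$ off \eqref{eq:loc:eqn:bdry:div}: near a point of $D^*_\Sc$ the ideal is $(t^{\beta_1},\dots,t^{\beta_r})$, and in the affine chart $\{v_1\neq 0\}$ that meets the generic point of $\hat{D}_\Sc$ the relations $t^{\beta_k}=v_k\,t^{\beta_1}$ make $\Ic\cdot\OO_{\blowupsp}$ principal, generated by $t^{\beta_1}=\prod_{l\geq 2}t_l^{m_l}$. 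The crucial point is that $\blowupsp$ is \emph{not} normal along $\hat{D}_\Sc$ (its singular locus has codimension one there), so the coefficient must be computed as the length $\mathrm{length}\big(\OO/(t^{\beta_1})\big)$ at the generic point rather than as a naive valuation. Setting $a:=t_1^{m_1}$, the chart relations give $t_k^{m_k}=v_k^{-1}a$ for every $k$, so $t^{\beta_1}$ equals $a^{r-1}$ up to a unit; reducing each monomial $\prod t_i^{a_i}$ to the form $(\text{unit})\cdot a^{\sum q_i}\prod_i t_i^{s_i}$ with $0\le s_i<m_i$ and using $a^{r-1}=0$ exhibits a basis of the quotient indexed by $\{0,1,\dots,r-2\}\times\prod_i\{0,\dots,m_i-1\}$. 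Its length is thus $(r-1)\prod_{i=1}^r m_i=(|\Sc|-2)\,m(\Sc)$, which is \eqref{eq:except:Weil:div}; partitions of type (a), where $r=1$, contribute $0$, as they must since they are not exceptional.

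For part (b), over the interior $\pstratesp\cong\Mod_{0,n}$ the bundle $\hat{\Lc}_\mu$ is trivial (as $\mathrm{Pic}(\Mod_{0,n})=0$), so $c_1(\hat{\Lc}_\mu)$ is boundary supported and the content is to pin it down. I would argue in two steps. First, each exceptional component $\hat{D}_\Sc$ sits in a single fibre of $\pkhodgespext\to\ol{\Mod}_{0,n}$ and is \emph{linearly} embedded there (the $\CP^{r-1}$ of \eqref{eq:loc:eqn:bdry:div} records the relative scalings of the limit $d$-differentials on $C^1,\dots,C^r$), so $\OO(-1)_{\pkhodgespext}$ restricts to the tautological $\OO_{\CP^{r-1}}(-1)$ on it; since the exceptional divisor of a blowup also restricts to $\OO(-1)$ on its projective fibres, $\hat{\Lc}_\mu-\Ec$ has degree $0$ on every curve contracted by $\hat{p}$, whence $\hat{\Lc}_\mu-\Ec=\hat{p}^*\Dc_\mu$ for a $\Q$-divisor class $\Dc_\mu$ on $\ol{\Mod}_{0,n}$. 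Second, I would identify $\Dc_\mu=\hat{p}_*\hat{\Lc}_\mu$ explicitly: over the interior the tautological line is $\C\cdot q$ with $\mathrm{div}(q)=\sum_i k_i\,x_i$, so taking leading coefficients at $x_i$ identifies $\hat{\Lc}_\mu$ with the $(k_i+d)$-th power of the cotangent line at $x_i$, and on $\ol{\Mod}_{0,n}$ this gives $\Dc_\mu\sim (k_i+d)\,\psi_i-B_i$, where $B_i\geq 0$ is the boundary locus along which the leading coefficient of the limiting twisted differential at $x_i$ degenerates. Computing $B_i$ from the boundary classification of Theorem~\ref{th:boundary:comp}, symmetrizing over $i$, and rewriting the $\psi_i$ through the standard genus-$0$ expression of cotangent classes as sums of boundary divisors should yield the closed formula \eqref{eq:express:D:mu}.

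The main obstacle is the second step of (b): correctly determining the discrepancy divisors $B_i$, equivalently fixing the rational coefficients in \eqref{eq:express:D:mu}. This demands a precise description of the leading behaviour at each marked point of the object parametrized by $\clpstratesp$ as one approaches a stratum $D^*_\Sc$ — which component $C^j$ carries the relevant piece of the differential, and with what power of the node parameters $t_j$ — and then an organization of all contributions over $\{I_0,I_1\}\in\Pcal$ into the symmetric combination \eqref{eq:express:D:mu}. The non-normality of $\blowupsp$ along the deep strata, already visible in part (a), makes this bookkeeping delicate, and it is here that the explicit local model \eqref{eq:loc:eqn:bdry:div} is indispensable.
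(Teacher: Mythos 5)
A preliminary remark: the paper does not prove this theorem at all — it is quoted from \cite{Ng23:a} — so the only material in the paper your proposal can be checked against is the local model \eqref{eq:loc:eqn:bdry:div} and the two places where closely related computations are actually carried out: the meromorphic-section order computations of \textsection\ref{sec:tauto:ln:bdl:bdry}, and Lemma~\ref{lm:self:inters:nb:ex:div} of Appendix~\ref{sec:low:dim:calcul}, which reproves the special case $n=5$, $r=2$ of part (a). With that caveat, your part (a) takes the right approach and reaches the right number: the coefficient of $\hat{D}_\Sc$ in the Weil divisor of the exceptional Cartier divisor is the length of $\OO_\eta/(t^{\beta_1})$ at the generic point $\eta$ of $\hat{D}_\Sc$, and this length is indeed $(r-1)\prod_{j=1}^r m_j=(|\Sc|-2)\,m(\Sc)$. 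But two steps are asserted rather than proved. First, you tacitly use that the chart ring is presented exactly by the ``cancelled'' relations $t_1^{m_1}=w_k t_k^{m_k}$; note that the displayed equations $v_jt^{\beta_k}=v_kt^{\beta_j}$ of \eqref{eq:loc:eqn:bdry:div}, taken scheme-theoretically, cut out something strictly larger than the blowup, so one must argue (e.g.\ by a dimension count plus Cohen--Macaulayness) that the complete intersection of the cancelled relations is irreducible and reduced and hence equals the closure of the graph. Second, your monomial reduction only shows that the elements $a^{q}\prod_i t_i^{s_i}$, $0\le q\le r-2$, $0\le s_i<m_i$, \emph{span} the quotient, i.e.\ it gives the upper bound $\mathrm{length}\le (r-1)\prod m_i$; their linear independence (the lower bound, which is the actual content) is not proved. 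This is exactly the point where the paper's own argument in Lemma~\ref{lm:self:inters:nb:ex:div} passes to the normalization, where lengths become sums of valuations along branches and the count drops out; your computation can be completed that way (or by a weighted-Bézout/Hilbert-series argument for the weighted-homogeneous complete intersection), but as written it is incomplete.

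Part (b) contains the genuine gap. Your step 1 — that $\hat{\Lc}_\mu\otimes\OO(-\Ec)$ is a pullback — is only checked on the fibres $\CP^{r-1}$ over the open strata $D^*_\Sc$; curves inside the fibres over deeper strata are not limits of these, and the implication ``numerically trivial on all contracted curves $\Rightarrow$ pullback'' would in any case invoke a negativity-lemma type statement that requires normality, which you yourself point out fails for $\blowupsp$. More importantly, step 2 — determining the correction divisors $B_i$, equivalently the coefficients $(|I_0|-1)(|I_1|-1-(n-1)\mu_\Sc)$ in \eqref{eq:express:D:mu} — is precisely the quantitative content of part (b), and your proposal explicitly stops there (``should yield the closed formula''). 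What is missing is the order-of-vanishing computation of the leading coefficient of the limiting differential along each divisor $D_\Sc$, $\Sc\in\Pc$, which is the same kind of explicit meromorphic-section/weight bookkeeping that the paper performs in Lemmas~\ref{lm:div:P0:sigma:order:zero} and~\ref{lm:div:P0:order:sigma:non:0} (and that \cite{Ng23:a} performs globally), followed by the symmetrization over $i$ using the genus-zero boundary expression of $\psi_i$. Your identification of the interior restriction of $\hat{\Lc}_\mu$ with $\psi_i^{\otimes(k_i+d)}$ is correct and is the right starting point, but without the boundary orders neither \eqref{eq:tauto:ln:bdl:expr} nor the specific divisor \eqref{eq:express:D:mu} is established; so part (b) remains a plausible plan rather than a proof.
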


\section{Tautological line bundle on the boundary of $\blowupsp$}\label{sec:tauto:ln:bdl:bdry}
In this section, we investigate the restriction of the tautological line bundle $\hat{\Lcal}_\mu$ to irreducible components of  $\partial\blowupsp$.
The results in this section will allow us to derive the recursive formula in Theorem~\ref{th:recursive:1}.

By Theorem~\ref{th:boundary:comp}, we know that each irreducible component of $\partial\blowupsp$ corresponds uniquely to a partition $\Sc$ in $\hat{\Pc}(\mu)$.
We divide $\hat{\Pcal}(\mu)$ into two subsets: $\hat{\Pcal}_0(\mu)$ is the set of partitions $\Sc=\{I_0,I_1\}\in \Pcal$ such that $\mu(I_0)=\mu(I_1)=1$, and $\hat{\Pcal}_1(\mu)=\hat{\Pcal}(\mu)\setminus\hat{\Pcal}_0(\mu)$.
Throughout this section, given $\Sc\in \hat{\Pc}(\mu)$, we denote by $D^*_\Scal$ the stratum of $\ol{\Mod}_{0,n}$ associated to $\Scal$.
Note that we have $\hat{D}_\Sc=\ol{\hat{p}^{-1}(D^*_\Sc)}$ (cf. Theorem~\ref{th:boundary:comp}).

\subsection{Tautological line bundle on boundary divisors in $\hat{\Pcal}_1(\mu)$}\label{subsec:tauto:l:bdl:on:div:P1}
Consider a partition $\Scal=\{I_0,\dots,I_r\} \in \hat{\Pcal}_1(\mu)$.
Let $n_j=|I_j|, \; j=0,\dots,r$.
Recall that the stable curves parametrized by  $D^*_\Sc$ have $r+1$ components each of which corresponds to a subset $I_j$ in the partition $\Scal$.
Since the component associated to $I_0$ is adjacent to all the other components, we have
$$
D^*_\Sc \simeq \Mod_{0,n_0+r}\times\Mod_{0,n_1+1}\times\dots\times\Mod_{0,n_r+1}.
$$
For each $j=1,\dots,r$, we define a vector $\kappa^{(j)}\in \Z^{n_j+1}$ by setting the first coordinate of $\kappa^{(j)}$ to be
$$
k^{(j)}_{1}:=-2d-\sum_{i\in I_j}k_i=\sum_{i\not\in I_j}k_i
$$
and  the remaining coordinates of $\kappa^{(j)}$ are $\{k_i, \;i \in I_j\}$ (the ordering of $\{k_i, \;i \in I_j\}$ does not matter).
By definition, we have  $k^{(j)}_{1}>-d$.
Consider the stratum of $d$-differentials $\Omega^d\Mod_{0,n_j+1}(\kappa^{(j)})$.
Let $\widehat{\Mod}_{0,n_j+1}(\kappa^{(j)})$ be the incidence variety compactification of $\Pb\Omega^d\Mod_{0,n_j+1}(\kappa^{(j)})$.
Define
$$
\Mod_\Scal:=\ol{\Mod}_{0,n_0+r}\times\left(\prod_{j=1}^r\widehat{\Mod}_{0,n_j+1}(\kappa^{(j)})\right).
$$
For $j=1,\dots,r$, let $p_{\Scal,j}: \Mod_\Scal \to \widehat{\Mod}_{0,n_j+1}(\kappa^{(j)})$ be the canonical projection.
Let $\hat{\Lc}_j$ denote the tautological line bundle on  $\widehat{\Mod}_{0,n_j+1}(\kappa^{(j)})$.
For simplicity, we will denote by $\OO(-1)$ the tautological line bundle on $\Pb\ol{\Hcal}^{(d)}_{0,n}$.

\begin{Proposition}\label{prop:bdry:div:bl:P1}
  Let $E_\Scal$ denote the vector bundle $ p_{\Scal,1}^*\hat{\Lcal}_1\oplus\dots\oplus p_{\Scal,r}^*\hat{\Lcal}_r$ over $\Mcal_\Scal$.
  Then there is a birational surjective morphism $i_\Scal: \Pb E_\Scal \to \hat{D}_\Scal$ such that $i_\Scal^*\OO(-1)\sim \OO(-1)_{\Pb E_\Scal}$.
\end{Proposition}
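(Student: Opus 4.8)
The plan is to realize $\hat{D}_\Scal$ as the image of $\Pb E_\Scal$ under the morphism that glues the differentials living on the ``outer'' components of the star-shaped stable curves parametrized by $D^*_\Scal$. Recall that for $\Scal=\{I_0,\dots,I_r\}\in\hat{\Pcal}_1(\mu)$ one has $\mu(I_0)<1$ and $\mu(I_j)>1$ for $j\geq 1$ (after reordering), so that a limit $d$-differential parametrized by a point of $\hat{D}_\Scal$ vanishes identically on the central component $C^0$ and restricts on each outer component $C^j$ to a $d$-differential of type $\kappa^{(j)}$; this is the content of the BCGGM description of the incidence variety compactification recalled in \cite{Ng23:a}. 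A point of $\Mcal_\Scal$ records the central curve $C^0\in\ol{\Mod}_{0,n_0+r}$ together with the projectivized outer differentials $(C^j,[q_j])\in\widehat{\Mod}_{0,n_j+1}(\kappa^{(j)})$, and a point of $E_\Scal$ above it is a genuine tuple $(a_1q_1,\dots,a_rq_r)$. First I would define a clutching morphism $\bar{i}_\Scal:\Mcal_\Scal\to\ol{\Mod}_{0,n}$ by gluing $C^0$ to $C^1,\dots,C^r$ at the $r$ nodes, and then define $i_\Scal$ on $\Pb E_\Scal$ by sending $\bigl(C^0,(C^j,[q_j]),[a_1q_1:\cdots:a_rq_r]\bigr)$ to the point of $\clpstratesp$ given by the glued curve together with the projectivized global section that is zero on $C^0$ and equals $a_jq_j$ on $C^j$.

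The core verification is that this gluing produces a well-defined section of $d\,\omega_C+\sum_i(d-1)x_i$ and varies algebraically, i.e. defines a fiberwise-injective morphism of vector bundles
\[
\gamma:E_\Scal\longrightarrow \bar{i}_\Scal^{\,*}\khodgespext
\]
over $\Mcal_\Scal$. Near the node $p_j=C^0\cap C^j$ the sheaf $d\,\omega_C$ is generated by $(\D x/x)^d$, and writing $q_j=y^{k_1^{(j)}}(\mathrm{unit})(\D y)^d$ in a local coordinate $y$ on the $C^j$-branch, one gets $q_j=y^{\,k_1^{(j)}+d}(\mathrm{unit})(\D y/y)^d$ with $k_1^{(j)}+d=d(\mu(I_j)-1)=m_j>0$. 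Hence $a_jq_j$ vanishes at $p_j$ and so matches the zero germ coming from $C^0$, and the tuple indeed glues to a global section; injectivity of $\gamma$ on fibers is clear, since a nonzero tuple gives a nonzero section. Taking $\Pb(-)$, the map $\gamma$ induces $i_\Scal:\Pb E_\Scal\to\Pb(\bar{i}_\Scal^{\,*}\khodgespext)$, whose composition with the projection to $\pkhodgespext$ is the morphism above and lands in $\hat{D}_\Scal$ by the description of its points. Because the projectivization of a fiberwise-injective bundle map pulls back the relative $\OO(-1)$ of the target to that of the source, this at once yields $i_\Scal^*\OO(-1)\sim\OO(-1)_{\Pb E_\Scal}$.

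It then remains to check that $i_\Scal$ is birational and surjective. A dimension count gives $\dim\Pb E_\Scal=\bigl(\dim\ol{\Mod}_{0,n_0+r}+\sum_{j=1}^r\dim\widehat{\Mod}_{0,n_j+1}(\kappa^{(j)})\bigr)+(r-1)=(n-r-3)+(r-1)=n-4=\dim\hat{D}_\Scal$. Over the open stratum $D^*_\Scal$ the map is bijective: from a smooth star curve carrying its global differential one recovers $C^0$, each $(C^j,[q_j])$, and the relative scaling $[q|_{C^1}:\cdots:q|_{C^r}]$, which is exactly the fiber coordinate of $\Pb E_\Scal$, and conversely these data reconstruct the point. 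Generic injectivity together with the equality of dimensions shows that $i_\Scal$ is birational. Finally, $\Pb E_\Scal$ is proper (a projective bundle over the proper space $\Mcal_\Scal$) and $\hat{D}_\Scal$ is irreducible, so the image of $i_\Scal$ is closed and contains the dense open set $i_\Scal(\Pb E_\Scal|_{D^*_\Scal})$; hence $i_\Scal$ is surjective.

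The delicate point is the construction and, above all, the extension of the gluing morphism $\gamma$ over the boundaries of the factors $\ol{\Mod}_{0,n_0+r}$ and $\widehat{\Mod}_{0,n_j+1}(\kappa^{(j)})$, where $C^0$ or an outer component degenerates further; here one must invoke the explicit local model \eqref{eq:loc:eqn:bdry:div} and the results of \cite{Ng23:a} to guarantee that no prong-matching or rescaling twist appears and that the image is precisely $\hat{D}_\Scal$. Once $\gamma$ is known to be a genuine morphism of vector bundles, the identification of tautological line bundles and the birationality are formal.
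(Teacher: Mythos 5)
Your proposal is correct and follows essentially the same route as the paper's proof: define $i_\Scal$ by gluing the zero differential on the central component with the scaled differentials $z_jq_j$ on the outer components, observe that over $D^*_\Scal$ this is an isomorphism onto $\hat{p}^{-1}(D^*_\Scal)$ (since on a smooth outer component the differential of type $\kappa^{(j)}$ is unique up to scale), and conclude surjectivity and birationality by properness and density, with the pullback of $\OO(-1)$ being immediate from the construction. Your extra verifications (the node-order computation showing the glued object is a genuine section of $d\,\omega_C+\sum_i(d-1)x_i$, the fiberwise-injective bundle map $\gamma$, and the dimension count) are elaborations of the same argument rather than a different approach, and the "delicate" boundary extension you flag at the end is in fact automatic, since any $q_j$ in the fiber of $\ol{\Hcal}^{(d)}_{0,n_j+1}$ has pole order at most $d-1<d$ at the node, so the matching condition at the node holds with both $d$-residues equal to zero.
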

\begin{proof}
Locally on $\widehat{\Mcal}_{0,n_j+1}(\kappa^{(j)})$, a trivializing section of $\hat{\Lc}_j$ is given by a family of pairs $(C^j, q_j)$, where $C^j$ is a stable curve in $\ol{\Mod}_{0,n_j+1}$ and $q_j$ a non-zero $d$-differential on $C^j$.
Therefore, locally an element of $E_\Scal$ can be identified with a tuple $((C^0,0),(C^1,q_1),\dots,(C^r,q_r),z)$, where
\begin{itemize}
\item[$\bullet$] $(C^0,0)$  is a stable curve parametrized by  $\ol{\Mod}_{0,n_0+r}$ equipped with the zero $d$-differential,

\item[$\bullet$]$(C^j,q_j)$ is a stable curve $C^j$ parametrized by $\ol{\Mod}_{0,n_j+1}$ equipped with a $d$-differential $q_j \not\equiv 0$ such that   $(C^j,q_j)\in \Omega^d\ol{\Mod}_{0,n_j+1}(\kappa^{(j)})$ for $j=1,\dots,r$,

\item[$\bullet$] $z=(z_1,\dots,z_r)\in \C^r$.
\end{itemize}
Such a tuple defines an element of $\khodgespext$ as follows: let $C$ be the stable curve represented by a point in $S$, which is obtained by identifying the first marked point on $C^j$ with a marked point on $C^0$ to form a node for $j=1,\dots,r$.
We consider the $C^j$'s as subcurves of $C$. Then the family $\{z_1q_1,\dots,z_rq_r\}$ gives a $d$-differential $q$ on $C$ ($q$ vanishes identically on $C^0$). The pair $(C,q)$ is clearly an element of $\khodgespext$.
Thus, by taking the projectivization, we get a map $i_\Scal : \Pb E_\Scal \to \pkhodgespext$.

Let $\Pb E^*_\Sc$ be the restriction of $\Pb E_\Sc$ to $D^*_\Sc=\Mod_{0,n_0+r}\times\Mod_{0,n_1+1}\times\dots\times\Mod_{0,n_r+1}\subset \Mcal_\Sc$.
If $C^j$ is a curve parametrized by a point in $\Mod_{0,n_j+1}$, then up to a multiplication by $\C^*$, there is a unique $d$-differential $q_j$ on $C^j$ such that $(C^j,q_j)\in \Omega^d \Mod_{0,n_j+1}(\kappa^{(j)})$.
Thus the restriction of $i_\Sc$ to $\Pb E^*_\Sc$ is an isomorphism between $\Pb E^*_\Scal$ and $\hat{p}^{-1}(D^*_\Sc) \subset \blowupsp \subset \pkhodgespext$.
It follows that $i_\Sc(\Pb E_\Sc)=\ol{\hat{p}^{-1}(D^*_\Sc)}=\hat{D}_\Sc$, and $i_\Sc: \Pb E_\Sc \to \hat{D}_\Sc$ is birational.
It is also clear from the definition that $i_\Sc^*\OO(-1)\sim \OO(-1)_{\Pb E_\Sc}$.
\end{proof}

\begin{Remark}\label{rk:div:bl:P1:bir:map}
The map $i_\Scal$ is not one-to-one in general. For instance, let $q_1$ and $q'_1$ be two $d$-differentials on the same curve $C^1$ such that both $(C^1,q_1)$ and $(C^1,q'_1)$ are elements of $\Omega^d\ol{\Mod}_{0,n_1+1}(\kappa^{(1)})$, but $q_1$ and $q'_1$ are not proportional (such $d$-differentials exist only if $C^1$ is reducible). Then the tuples $\allowbreak ((C^0,0),(C^1,q_1), \dots, (C^r,q_r), (0,\dots,0,1))$ and $((C^0,0),(C^1,q'_1),\dots,(C^r,q_r),(0,\dots,0,1))$ give the same element of $\Hcal^{(d)}_{0,n}$, that is a $d$-differential on $C$ which is equal to $q_r$ on $C^r$ and vanishes identically on the other subcurves.
\end{Remark}

\subsection{Tautological line bundle on boundary divisors in $\hat{\Pcal}_0(\mu)$}\label{subsec:tauto:l:bdl:div:P0}
Consider now a divisor $\hat{D}_\Sc$ in $\blowupsp$ with $\Sc=\{I_0,I_1\} \in \hat{\Pcal}_0(\mu)$.
In this case $\hat{D}_\Sc$ is the proper transform of the boundary divisor $D_\Sc$ in $\partial\ol{\Mod}_{0,n}$ associated to $\Sc$.
Since $D_\Sc$ is not contained in the support of $\Ical$, $\hat{D}_\Sc$ is actually the blow-up of $D_\Sc$ along the ideal sheaf $\Ical_{|D_\Sc}$ (see \cite[Chap.II, Cor.7.15]{Hart}). Note also that the local generating sets of the ideal sheaf $\Ic$ do not involve the defining function of $D_\Scal$.

\subsubsection{Boundary of $\hat{D}_{\Sc}$}\label{sec:bdry:of:div:P0}
Recall that for all $\Sc' \in \hat{\Pc}(\mu)$, $D^*_{\Sc'}$ is the stratum of $\ol{\Mod}_{0,n}$ associated with $\Sc'$ and $D_{\Sc'}$ is the closure of $D^*_{\Sc'}$ in $\ol{\Mod}_{0,n}$.
\begin{Definition}\label{def:part:div:in:div:P0}
Let $\hat{\Pcal}(\mu,\Sc)$ denote the set of partitions $\Sc'$ in $\hat{\Pcal}(\mu)\setminus \{\Sc\}$ such that $D_{\Sc'}$  intersects $D_\Scal$.
For all $\Sc'\in \hat{\Pc}(\mu,\Sc)$, we denote  the intersection $D_{\Sc'}\cap D_\Sc$ by $D_{\Sc'\cdot\Sc}$.
The stratum of $\ol{\Mod}_{0,n}$ whose closure is equal to $D_{\Sc'\cdot\Sc}$ will be denoted by $D^*_{\Sc'\cdot\Sc}$.
\end{Definition}

By definition, $D_\Sc$ is the closure of the stratum $D^*_\Sc \simeq \Mod_{0,n_0+1}\times \Mod_{0,n_1+1}$ in $\ol{\Mod}_{0,n}$, where $n_j=|I_j|,  \; j=0,1$. One can consider $D^*_\Sc$ as an open dense subset in $\hat{D}_\Sc$. Denote by $\partial\hat{D}_\Sc$ the complement of $D^*_\Sc$ in $\hat{D}_\Sc$. Since the defining function of $D_\Sc$ in $\ol{\Mod}_{0,n}$ is not involved in the ideal sheaf $\Ic$, the same arguments as in the proof of \cite[Th. 4.1]{Ng23:a} yield
\begin{Proposition}\label{prop:bdry:div:bl:P0}
Each irreducible component of $\partial\hat{D}_\Scal$ is a divisor in $\hat{D}_\Scal$,
which corresponds to the intersection $\hat{D}_\Scal \cap \hat{D}_{\Scal'}$, where $\Scal'$ is an element of $\hat{\Pcal}(\mu,\Sc\}$.
Moreover, let $D^*_{\Sc'\cdot \Sc}$ be the stratum of $\ol{\Mod}_{0,n}$ such that $D_{\Sc'}\cap D_{\Sc}=\ol{D^*_{\Sc'\cdot\Sc}}$. Then we have
$$
\hat{D}_{\Sc'\cdot \Sc}:=\hat{D}_{\Sc'}\cap\hat{D}_\Sc=\ol{\hat{p}^{-1}(D^*_{\Scal'\cdot \Scal})}.
$$
\end{Proposition}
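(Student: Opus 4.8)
The plan is to reduce the statement to Theorem~\ref{th:boundary:comp} applied to the divisor $D_\Sc$ in place of the whole space $\ol{\Mod}_{0,n}$. The two observations recorded just before the statement are what make this possible: since $D_\Sc$ is not contained in the support of $\Ic$, the proper transform $\hat{D}_\Sc$ equals the blowup of $D_\Sc$ along the restricted ideal $\Ic_{|D_\Sc}$, and since the defining function of $D_\Sc$ does not occur among the local generators of $\Ic$, that restricted ideal is again generated by the same monomials in the node-smoothing parameters.

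First I would make the product structure of $D_\Sc$ explicit. As $\Sc=\{I_0,I_1\}\in\Pcal$, the divisor $D_\Sc$ is isomorphic to $\ol{\Mod}_{0,n_0+1}\times\ol{\Mod}_{0,n_1+1}$, with dense open stratum $D^*_\Sc$ parametrizing a two-component curve $C^0\cup C^1$ glued at a single node. Its boundary $\partial D_\Sc$ is the locus where $C^0$ or $C^1$ degenerates further, so the irreducible components of $\partial D_\Sc$ are exactly the intersections $D_\Sc\cap D_{\Sc'}$ with $\Sc'\in\hat{\Pcal}(\mu,\Sc)$; each such intersection is the closure of a single stratum $D^*_{\Sc'\cdot\Sc}$, by the compatibility (nestedness) condition on boundary divisors of $\ol{\Mod}_{0,n}$.

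Second, using the two observations above, I would establish that near a generic point of each stratum $D^*_{\Sc'\cdot\Sc}$ the blowup $\hat{p}\colon\hat{D}_\Sc\to D_\Sc$ is given by local equations of exactly the form \eqref{eq:loc:eqn:bdry:div}: because the function cutting out $D_\Sc$ does not appear among the generators of $\Ic$, restricting those generators to $D_\Sc$ (setting that function to zero) leaves them unchanged, so $\mathrm{Bl}_{\Ic_{|D_\Sc}}(D_\Sc)$ inherits the same chart, with only the directions transverse to $D_\Sc$ removed. This compatibility of restriction with blowing up is the crux of the argument and the step I expect to require the most care.

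Third, once this local model is in place, the argument from the proof of \cite[Th.~4.1]{Ng23:a} (that is, the proof of Theorem~\ref{th:boundary:comp}) applies verbatim over $D_\Sc$: it shows that every irreducible component of $\partial\hat{D}_\Sc$ is a divisor, that these components are indexed by the partitions $\Sc'\in\hat{\Pcal}(\mu,\Sc)$, and that the component attached to $\Sc'$ is $\ol{\hat{p}^{-1}(D^*_{\Sc'\cdot\Sc})}$. To conclude, I would identify this component with $\hat{D}_{\Sc'}\cap\hat{D}_\Sc$ by comparing the two sets over the dense stratum $D^*_{\Sc'\cdot\Sc}$, where both are described by the same fiber product of local charts, and then taking closures inside $\hat{D}_\Sc$, using that $D^*_{\Sc'\cdot\Sc}$ is dense in $D_{\Sc'}\cap D_\Sc$. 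With the second step secured, the bookkeeping of which partitions $\Sc'$ occur and the final closure argument are routine.
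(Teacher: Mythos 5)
Your proposal is correct and follows essentially the paper's own route: the paper's entire proof consists of the two observations you lead with (that $\hat{D}_\Sc$ is the blowup of $D_\Sc$ along $\Ic_{|D_\Sc}$, and that the local generators of $\Ic$ do not involve the defining function of $D_\Sc$), followed by the remark that the arguments proving Theorem~\ref{th:boundary:comp}, i.e.\ \cite[Th.~4.1]{Ng23:a}, then apply verbatim to the blowup $\hat{p}\colon \hat{D}_\Sc \to D_\Sc$. The one imprecision is in your first step: downstairs, the irreducible components of $D_\Sc\setminus D^*_\Sc$ correspond only to the two-part partitions in $\hat{\Pc}(\mu,\Sc)$, since for a multi-part $\Sc'$ the intersection $D_{\Sc'}\cap D_\Sc$ has codimension at least $2$ in $D_\Sc$ and contributes a divisor only after blowing up; this does not harm the argument, as your third step states the upstairs indexing correctly.
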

Our goal now is to prove the following

\begin{Theorem}\label{th:tauto:l:bdl:on:div:P0}
Let $\hat{\Pc}^*(\mu,\Sc)$ be the set of $\Sc'=\{I'_0,\dots,I'_r\}\in \hat{\Pc}(\mu,\Sc)$, such that $\Sc'\in \hat{\Pc}_1(\mu)$ and the curves parametrized by $D^*_{\Sc'\cdot\Sc}$ are obtained from the ones parametrized by $D^*_{\Sc'}$ by pinching a loop in the component associated to $I'_0$.
Then the restriction of the tautological line bundle $\OO(-1)$ to $\hat{D}_\Scal$ is represented by the Weil divisor
$$
W_\Scal:=-\sum_{\Scal' \in \hat{\Pcal}^*(\mu,\Scal)}m({\Scal'})\cdot \hat{D}_{\Scal'\cdot\Scal}.
$$
\end{Theorem}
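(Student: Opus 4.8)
The plan is to produce a distinguished rational section of $\OO(-1)|_{\hat{D}_\Scal}$ that is regular and nowhere vanishing on the open dense stratum $D^*_\Scal$, and then to read off its divisor, which is necessarily supported on $\partial\hat{D}_\Scal$. By Proposition~\ref{prop:bdry:div:bl:P0}, the irreducible components of $\partial\hat{D}_\Scal$ are exactly the $\hat{D}_{\Scal'\cdot\Scal}$ with $\Scal'\in\hat{\Pcal}(\mu,\Scal)$, so the divisor of such a section is automatically of the form $\sum_{\Scal'}a_{\Scal'}\,\hat{D}_{\Scal'\cdot\Scal}$; the entire content of the theorem is the determination of the multiplicities $a_{\Scal'}$.

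First I would describe the section. Over $D^*_\Scal\simeq\Mod_{0,n_0+1}\times\Mod_{0,n_1+1}$ the limiting $d$-differential splits as a pair $(q_0,q_1)$, where $q_j$ is a $d$-differential on the component $C^j\simeq\CP^1$ with $\mathrm{div}(q_j)=\sum_{i\in I_j}k_i\,x_i-d\,p_j$, $p_j$ being the node; the pole of order exactly $d$ at the node reflects the defining condition $\mu(I_0)=\mu(I_1)=1$ of $\hat{\Pcal}_0(\mu)$. Since a meromorphic $d$-differential on $\CP^1$ is determined up to a scalar by its divisor, each $q_j$ is unique up to scale, and after fixing the relative normalization forced by the matching of the $d$-th order parts (the $d$-residues) at the node, the pair $(q_0,q_1)$ defines a nowhere-zero holomorphic section $s$ of $\OO(-1)$ over $D^*_\Scal$. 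Because $\mathrm{Pic}(\Mod_{0,m})=0$, the line bundle is trivial on $D^*_\Scal$, so $s$ extends to a rational section of $\OO(-1)|_{\hat{D}_\Scal}$ whose divisor is supported on $\partial\hat{D}_\Scal$.

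The core of the argument is then the local order computation along each component $\hat{D}_{\Scal'\cdot\Scal}$. Fixing $\Scal'=\{I'_0,\dots,I'_r\}\in\hat{\Pcal}(\mu,\Scal)$, I would work in the plumbing coordinates of \eqref{eq:loc:eqn:bdry:div} adapted to $\Scal'$: smoothing the $r$ nodes by parameters $t=(t_1,\dots,t_r)$, the limiting differential is rescaled on the outer components by the monomials $t^{\beta_j}$, whose exponents encode the numbers $m_j=d(\mu(I'_j)-1)$. Tracking how the normalized pair $(q_0,q_1)$ transforms under this rescaling, and comparing with a frame of $\OO(-1)$ regular across $\hat{D}_{\Scal'\cdot\Scal}$, yields the order of $s$ along that divisor. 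The expected outcome is that $s$ acquires the monomial weight $m(\Scal')=d^r\prod_{j=1}^r(\mu(I'_j)-1)$ precisely when the extra node realizing the $\Scal$-separation is carried by the central ($I'_0$) component — i.e. when $\Scal'\in\hat{\Pcal}^*(\mu,\Scal)$ — while in every other case the rescalings on the two sides of the $\Scal$-node balance and $s$ has order $0$. Keeping track of signs (the tautological bundle being negative) gives the coefficient $-m(\Scal')$, which is the asserted relation $\OO(-1)|_{\hat{D}_\Scal}\sim W_\Scal$.

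The main obstacle is exactly this last local analysis: one must show that the two families of rescaling monomials entering $q_0$ and $q_1$ cancel along $\hat{D}_{\Scal'\cdot\Scal}$ whenever the extra degeneration takes place inside an outer component (so that $\Scal'\notin\hat{\Pcal}^*(\mu,\Scal)$), and fail to cancel, leaving the net weight $m(\Scal')$, precisely when it takes place on the $I'_0$-component. An alternative, more computational route is to restrict the representative $\hat{\Lcal}_\mu\sim\hat{p}^*\Dcal_\mu+\Ecal$ of \eqref{eq:tauto:ln:bdl:expr} to $\hat{D}_\Scal$: since $|\Scal|=2$, the coefficient of $\hat{D}_\Scal$ in $\Ecal$ vanishes by \eqref{eq:except:Weil:div}, so $\Ecal|_{\hat{D}_\Scal}=\sum_{\Scal'}(|\Scal'|-2)m(\Scal')\,\hat{D}_{\Scal'\cdot\Scal}$, while $\hat{p}^*\Dcal_\mu|_{\hat{D}_\Scal}=(\hat{p}|_{\hat{D}_\Scal})^*(\Dcal_\mu|_{D_\Scal})$ is computed from the intersection theory of boundary classes on $\ol{\Mod}_{0,n}$ restricted to $D_\Scal\simeq\ol{\Mod}_{0,n_0+1}\times\ol{\Mod}_{0,n_1+1}$ (self-intersection giving the $\psi$-classes at the node, the cross terms giving the codimension-two strata). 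Here the bulk of the work is to show that these two contributions combine, after pulling back through the blow-up, to cancel all terms except those indexed by $\hat{\Pcal}^*(\mu,\Scal)$, again leaving the coefficient $-m(\Scal')$.
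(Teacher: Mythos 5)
Your proposal follows essentially the same route as the paper: the paper constructs exactly this residue-normalized section $\Phi$ over $D^*_\Scal$ (unique differentials $q_0,q_1$ with pole of order $d$ at the node, rescaled so both residues equal $1$), extends it meromorphically to $\hat{D}_\Scal$, and computes its order along each $\hat{D}_{\Scal'\cdot\Scal}$, obtaining order $0$ when the pinched loop lies in an outer component and order $-m(\Scal')$ when it lies in the $I'_0$-component (Lemmas~\ref{lm:div:P0:sigma:order:zero} and~\ref{lm:div:P0:order:sigma:non:0}). The local analysis you flag as the main obstacle is handled in the paper by comparing $\Phi$ with the trivializing sections $\Phi_0,\dots,\Phi_{r+1}$ of \cite{Ng23:a}: in the central case the identity $\Phi_0/\Phi_r=t_r^{-m_r}$, combined with the fact that $t_r$ has order $\prod_{j=1}^{r-1}m_j$ along $\hat{D}_{\Scal'}$ (\cite[Lem.~6.1]{Ng23:a}), yields exactly the weight $-m(\Scal')$, while in the outer case $\Phi_{r+1}/\Phi_r$ is a unit, giving order $0$.
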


\subsubsection{Meromorphic section of $\OO(-1)$ over $\hat{D}_\Sc$}\label{sec:sect:tauto:ln:bdl:div:P0}
To prove Theorem~\ref{th:tauto:l:bdl:on:div:P0}, we single out a meromorphic section of $\OO(-1)_{|\hat{D}_\Sc}$ which does not vanish in $D^*_\Sc$ and  compute its vanishing orders at the components of $\partial\hat{D}_\Sc$.
Let $q$ be a meromorphic $d$-differential on $\CP^1$ which has a pole of order $d$ at $x_0 \in \CP^1$.
The residue of $q$ at $x_0$ is defined as follows: let $(\hat{C}, \homg, \tau)$ be the canonical cyclic cover of $(\CP^1,q)$.
Let $\hat{x}_0$ be a point in $\hat{C}$ which is mapped to $x_0$.
The preimage of $x_0$ in $\hat{C}$ consists of $\{\tau^k(\hat{x}_0), \; k=0,\dots,d-1\}$.
The Abelian differential $\homg$ has a simple pole at $\hat{x}_0$.
Let $c$ be the residue of $\homg$ at $\hat{x}_0$.
Then the residue of $\homg$ at any other point in the preimage of $x_0$ is equal to $\lambda\cdot c$, where $\lambda$ is a $d$-th root of unity. Thus $c^d$ does not depend on the choice of $\hat{x}_0$, we will call it the residue of $q$ at $x_0$.
Concretely, if $z_0=0$, and $q=\frac{f(z)(dz)^d}{z^d}$, where $f(z)$ is a holomorphic function in a neighborhood of $0$ such that $f(0)\neq 0$, then the residue of $q$ at $0$ is simply $f(0)$.

We construct a trivializing section of $\OO(-1)_{|D^*_\Sc}$ as follows: for any $\xx\in D^*_\Sc$,  the curve $C_\xx$ has two components $C^0_\xx$ and $C^1_\xx$. For each $j\in \{0,1\}$, up to a scalar in  $\C^*$ there is a unique meromorphic $d$-differential $q_j(\xx)$ which vanishes to the order $k_i$ at the $i$-th marked point in $C^j_\xx$ (with $i\in I_j$) and to the order $-d$ at the unique node of $C_\xx$.
By construction $q_0(\xx)$ and $q_1(\xx)$ have  non-zero residues at the node of $C8\xx$. By multiplying $q_0(\xx)$ and $q_1(\xx)$ by a non-vanishing regular  function, we can assume that their residues at the node are both equal to $1$. This normalization allows us to patch the local sections $(q_0(\xx),q_1(\xx))$ together to obtain a trivializing section $\Phi$ of $\OO(-1)$ over $D^*_\Sc$.
The section $\Phi$ extends naturally to a meromorphic section of $\OO(-1)$ over $\hat{D}_\Sc$. 
We will compute the vanishing order of $\Phi$ along each of the irreducible components of $\partial\hat{D}_\Sc$.

\medskip

Let $\hat{\xx}$ be a point in $\partial\hat{D}_\Scal$, and $\xx$ its image in $D_\Sc$. We  suppose that $\hat{\xx}$ is a generic point in $\hat{D}_{\Scal'\cdot\Scal}$ which means that $\xx\in D^*_{\Sc'\cdot\Sc}$, for some $\Scal'=\{I'_0,\dots,I'_r\}\in \hat{\Pcal}(\mu,\Scal)$.
Note that $\mathrm{codim}D^*_{\Sc'\cdot \Sc}=\mathrm{codim}D^*_{\Sc'}+1=r+1$. Therefore, the pointed curve $(C_\xx,x_1,\dots,x_n)$ parametrized by $\xx$ has $r+1$ nodes and $r+2$ irreducible components. 
By definition, $C_\xx$ has a node $\alpha$ such that the two subcurves $\hat{C}^0_{\xx,\alpha}, \hat{C}^1_{\xx,\alpha}$ separated by $\alpha$ satisfy $x_i \in \hat{C}^j_{\xx,\alpha}$ if and only if $i\in I_j, \; j=0,1$.
Smoothening the node $\alpha$ of $C_\xx$, we obtain a pointed curve $C_{\xx'}$ parametrized by a point $\xx' \in D^*_{\Sc'}$.
Denote the irreducible components of $C_{\xx'}$ by $C^0_{\xx'},\dots,C^r_{\xx'}$, where $C^j_{\xx'}$  contains the marked points with index in $I'_j$.
Recall that for each $j\in \{1,\dots,r\}$,  there is a node between $C^0_{\xx'}$ and $C^j_{\xx'}$.
Topologically, $C_\xx$ is obtained from $C_{\xx'}$ by pinching a simple closed curve on an irreducible component $C^i_{\xx'}$.
Let us choose a numbering of the irreducible components of $C_\xx$ as follows: for $j\in\{0,\dots,r\}$ and $j\neq i$, $C^j_\xx$ is the component of $C_\xx$ which corresponds to $C^j_{\xx'}$.
The component $C^i_{\xx'}$ corresponds to the union of two components of $C_\xx$. We denote one of the two components by $C^i_\xx$ and the other one by $C^{r+1}_\xx$.

\medskip

We can identify a neighborhood of $\xx$ in $\ol{\Mod}_{0,n}$ with $\Uc_\xx=\Delta^{r+1}\times U$ where $\Delta\subset\C$ is a small disc about $0$, and $U$ is an open subset of $\C^{n-4-r}$. Let $t=(t_1,\dots,t_{r+1})$ be the coordinate functions on $\Delta^{r+1}$, where $t_j$ is the coordinate associated to the node between $C^0_{\xx'}$ and $C^j_{\xx'}$ for $j=1,\dots,r$ (here we identify a node of $C_{\xx'}$ with a node of $C_\xx$), and $t_{r+1}$ corresponds to the node $\alpha$.
By definition  $D_{\Scal'}\cap\Ucal_\xx$ is defined by the equations $\{t_j=0, \; j=1,\dots,r\}$ and $D_\Sc\cap \Uc_\xx$ is defined by $t_{r+1}=0$.

For $j=1,\dots,r$, set $m_j:=d(\mu(I'_j)-1)\in \Z_{>0}$. Define
$$
t^{\beta_0}=t_1^{m_1}\cdots t_r^{m_r}, \quad t^{\beta_k}=\frac{t^{\beta_0}}{t_k^{m_k}}, \quad k\in \{1,\dots,r\}, \quad \text{ and } t^{\beta_{r+1}}=t^{\beta_i}.
$$
Then $\widehat{\Uc}_\xx=\hat{p}^{-1}(\Uc_\xx)$ is defined by the equations $v_jt^{\beta_k}=v_kt^{\beta_j}, \; \forall j,k \in \{1,\dots,r\}$
in $\Uc_\xx\times\Pb^{r-1}_\C$. Observe that $\widehat{\Uc}_\xx$ is covered by the open subsets $\widehat{\Uc}^k_\xx=\{(t,u,[v_1:\dots:v_r]) \in \widehat{\Uc}_\xx, \; v_k\neq 0\}$, for $k=1,\dots,r$.
%
\begin{Lemma}\label{lm:div:P0:sigma:order:zero}
If $i>0$  then the order of $\Phi$ along $\hat{D}_{\Sc'\cdot\Sc}$ is zero.
\end{Lemma}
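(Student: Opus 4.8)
The plan is to prove that $\Phi$ extends across the generic point $\hat{\xx}$ of $\hat{D}_{\Sc'\cdot\Sc}$ as a nonvanishing holomorphic frame of $\OO(-1)_{|\hat{D}_\Sc}$, so that its divisor has coefficient $0$ along $\hat{D}_{\Sc'\cdot\Sc}$. To this end I would first choose, via the plumbing description underlying \eqref{eq:loc:eqn:bdry:div}, a local holomorphic frame $\Psi$ of $\OO(-1)_{|\hat{D}_\Sc}$ near $\hat{\xx}$; by construction $\Psi$ is a holomorphically varying family of $d$-differentials whose value at $\hat{\xx}$ is a nonzero multiple of the limit differential $q(\hat{\xx})$ singled out by the incidence variety compactification. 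Since $\Phi$ and $\Psi$ take values on the same tautological line at each point of $D^*_\Sc$, and since $\Phi$ is normalized to have residue $1$ at the node $\alpha$, we have $\Phi=\Psi/\mathrm{res}_\alpha(\Psi)$ there, whence $\mathrm{ord}_{\hat{D}_{\Sc'\cdot\Sc}}(\Phi)=-\,\mathrm{ord}_{\hat{D}_{\Sc'\cdot\Sc}}(\mathrm{res}_\alpha(\Psi))$. As $\mathrm{res}_\alpha(\Psi)$ is a holomorphic function on $\hat{D}_\Sc$ near $\hat{\xx}$, the claim reduces to showing that it does not vanish at $\hat{\xx}$, i.e. that $q(\hat{\xx})$ has a pole of order exactly $d$ with nonzero residue at $\alpha$.

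The decisive point, and the place where the hypothesis $i>0$ is used, is the identification of $q(\hat{\xx})$. Regarding the order-$d$ pole at $\alpha$ as a marked point of curvature $\mu=1$ (since $-(-d)/d=1$), the degeneration of the component of the $\Sc$-curve carrying the points $I_0$ is governed by a partition of type $\hat{\Pc}_1(\mu)$: its central component $C^0_\xx$ lies at the top level and carries the zero differential, while every outer component lies at the bottom level and carries a nonzero $d$-differential. When $i>0$, the node $\alpha$ lies on the outer (hence bottom-level) component $C^i_\xx$, and $\alpha$ joins $C^i_\xx$ to the leaf $C^{r+1}_\xx$ carrying the points $I_1$ with $\mu(I_1)=1$; thus $\alpha$ is a \emph{horizontal} node joining two bottom-level components on both of which $q(\hat{\xx})$ is nonzero. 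A degree count on $C^i_\xx$ — with zeros of order $k_\ell$ at the points of $I'_i\smin I_1$ and order $k^{(i)}_1=d(\mu(I'_i)-2)$ at the node joining $C^i_\xx$ to $C^0_\xx$ — then forces the order at $\alpha$ to be exactly $-d$. Hence $q(\hat{\xx})$ has a genuine order-$d$ pole with nonzero residue at $\alpha$, so $\mathrm{res}_\alpha(\Psi)(\hat{\xx})\neq0$ and the order of $\Phi$ along $\hat{D}_{\Sc'\cdot\Sc}$ is $0$.

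The main obstacle will be the bookkeeping in the coordinates $(t,u,[v])$ of \eqref{eq:loc:eqn:bdry:div}: one must verify that the plumbing family $\Psi$ really restricts to the stated limit differential at $\hat{\xx}$, so that no additional power of the smoothing parameters is concealed in passing from $\Psi$ to the residue-normalized $\Phi$, and that $\mathrm{res}_\alpha(\Psi)$ is indeed holomorphic and correctly computed on the bottom-level component $C^i_\xx$. This is precisely the step that separates the two cases: were $i=0$, the node $\alpha$ would lie on the top-level component $C^0_\xx$, where $q(\hat{\xx})$ vanishes identically, so $\mathrm{res}_\alpha(\Psi)$ would vanish along $\hat{D}_{\Sc'\cdot\Sc}$ and the order of $\Phi$ would be nonzero (and equal to $m(\Sc')$); the hypothesis $i>0$ is exactly what prevents this vanishing.
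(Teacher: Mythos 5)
Your proposal is correct and takes essentially the same route as the paper: both proofs express $\Phi$ near the generic point of $\hat{D}_{\Sc'\cdot\Sc}$ as a local trivializing section of the tautological bundle divided by its residue at the node $\alpha$, and reduce the claim to the non-vanishing of that residue function along $\hat{D}_{\Sc'\cdot\Sc}$, which holds precisely because for $i>0$ the limit differential is nonzero on both branches at $\alpha$ and has there a pole of order exactly $d$ (a weight-zero, ``horizontal'' node). The only difference is one of packaging: the paper uses the explicit section $\Phi_{r+1}$ of \cite{Ng23:a} as its frame (with $\Phi_{r+1}/\Phi_r$ non-vanishing since the node has weight zero), whereas you take an abstract plumbing frame $\Psi$ and justify the non-vanishing of $\mathrm{res}_\alpha(\Psi)$ by the degree count on $C^i_\xx$ --- the same key fact in different clothing.
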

\begin{proof}
Without loss of generality, we can suppose that $i=r$.
We can choose the numbering of $C^0_\xx,\dots,C^{r+1}_\xx$ such that $C^r_\xx$ is adjacent to both $C^0_\xx$ and $C^{r+1}_\xx$. Recall that $\Sc=\{I_0,I_1\}$ is the partition associated with the node between $C^r_\xx$ and $C^{r+1}_\xx$.

In \cite[\textsection 2]{Ng23:a}, we have constructed the meromorphic sections $\Phi_0,\dots,\Phi_{r+1}$ of the bundle $K^{\otimes d}_{\ol{\Cc}_{0,n}/\ol{\Mod}_{0,n}}$ over $\ol{\Cc}_{0,n|\Uc_\xx}$. It was shown in \cite[\textsection 5]{Ng23:a} that $\Phi_k$ gives a trivializing section of the line bundle $\hat{\Lc}_\mu$ on the open subset $\widehat{\Uc}_\xx^k$, for all $k=1,\dots,r$. Since the node between $C^r_\xx$ and $C^{r+1}_\xx$ has weight zero, the ratio $\Phi_{r+1}/\Phi_r$ is a non-vanishing holomorphic function on $\Uc_\xx$ (hence on $\widehat{\Uc}_\xx$). In particular,  $\Phi_{r+1}$ gives a trivializing section of $\hat{\Lc}_\mu$ over $\widehat{\Uc}^r_{\xx}$.

Consider now a point $\yy \in D^*_\Sc\cap \Uc_\xx$.  The curve $C_\yy$ has two components $C^0_\yy$ and $C^1_\yy$ that are joined by a node corresponding to $\alpha$ (which means that the associated partition of $\{1,\dots,n\}$ is $\Sc$). Without loss of generality, we can assume that $C^1_\yy$ is the component that degenerates to $C^{r+1}_\xx$.  By construction, the restriction of $\Phi_{r+1}$ to $C^1_\yy$ has zeros of order $k_i$ at the $i$-marked point, with $i\in I_1$, and a pole of order $d$ at the node. The residue of $\Phi_{r+1}$ at the node gives a well defined non-vanishing holomorphic function $f$ on $\widehat{\Uc}^r_\xx\cap D^*_\Sc$. This function extends to non-vanishing function on $\widehat{\Uc}^r_\xx\cap \hat{D}_\Sc$, which will be also denoted by $f$. We now observe that $f^{-1}\cdot\Phi_{r+1}$ coincides with the section $\Phi$ in $D^*_\Sc\cap\widehat{\Uc}^r_\xx$. Since $f^{-1}\cdot\Phi_{r+1}$ is a trivializing section of $\hat{\Lc}_\mu$ on $\hat{D}_\Sc\cap\widehat{\Uc}^r_\xx$, we conclude that the vanishing order of $\Phi$ along $\hat{D}_{\Sc'\cdot\Sc}=\hat{D}_\Sc\cap\hat{D}_{\Sc'}$ is zero.
\end{proof}

\begin{Lemma}\label{lm:div:P0:order:sigma:non:0}
If $i=0$, then the vanishing order of $\Phi$ along $\hat{D}_{\Sc'\cdot\Sc}$ is $-m(\Sc')$, where
$$
m(\Sc')=d^r\prod_{j=1}^r(\mu(I'_j)-1)=m_1\cdots m_r.
$$
\end{Lemma}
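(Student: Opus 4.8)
The plan is to follow the same scheme as in the proof of Lemma~\ref{lm:div:P0:sigma:order:zero}, the essential difference being that now (the case $i=0$) the node $\alpha$ lies on the central component, so that smoothing $\alpha$ separates the two pieces $C^0_\xx,C^{r+1}_\xx$ into which $C^0_{\xx'}$ degenerates rather than joining two top-level pieces. As there, I would work on a chart $\widehat{\Uc}^k_\xx$ (for a fixed $k\in\{1,\dots,r\}$) on which the trivializing section $\Phi_k$ of $\hat{\Lc}_\mu=\OO(-1)$ produced in \cite[\textsection 2,\textsection 5]{Ng23:a} is defined. Since $\Phi_k$ is nowhere vanishing on $\widehat{\Uc}^k_\xx$, its restriction to $\hat{D}_\Sc\cap\widehat{\Uc}^k_\xx$ generates $\OO(-1)_{|\hat{D}_\Sc}$. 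Writing $g:=\mathrm{res}_\alpha\Phi_k$ for the residue of $\Phi_k$ at $\alpha$ (a well-defined function on $\hat{D}_\Sc$, since on $D^*_\Sc$ the differential has a pole of order $d$ at the weight-zero node $\alpha$), the normalization defining $\Phi$ gives $\Phi=g^{-1}\cdot\Phi_k$, and therefore
\[
\mathrm{ord}_{\hat{D}_{\Sc'\cdot\Sc}}\Phi=-\,\mathrm{ord}_{\hat{D}_{\Sc'\cdot\Sc}}(g).
\]
Everything thus reduces to the vanishing order of the single residue $g=\mathrm{res}_\alpha\Phi_k$ along $\hat{D}_{\Sc'\cdot\Sc}$.

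Next I would identify, in the limit, the component of $C_\xx$ carrying $\Phi_k$. Because each $\mu(I'_j)>1$, the only way the tautological $d$-differential can stay a section of $d\omega_{C_\xx}$ with poles of order at most $d$ at the nodes is to be supported on the peripheral components; the chart $\widehat{\Uc}^k_\xx$ is precisely the one on which the top-level differential sits on $C^k_\xx$, with orders $k_i$ at the points of $I'_k$ and $m_k-d$ at its node to the central piece. Since $i=0$, the node $\alpha$ lies on the central component, which is at a strictly lower level than $C^k_\xx$; this is exactly what forces $g$ to vanish, in contrast with the case $i>0$ of Lemma~\ref{lm:div:P0:sigma:order:zero}, where $\alpha$ lies on a top-level component and the corresponding residue is already a unit. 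Restoring the node between $C^k_\xx$ and the central piece by the plumbing parameter $t_k$ and inserting the explicit form of $\Phi_k$ from \cite{Ng23:a}, the restriction of $\Phi_k$ to the central component equals $t_k^{m_k}$ times a $d$-differential whose residue at $\alpha$ is a nowhere-vanishing holomorphic function near $\hat{\xx}$. Hence $g$ agrees, up to a unit, with the monomial $t_k^{m_k}=t^{\beta_0}/t^{\beta_k}$ of \eqref{eq:loc:eqn:bdry:div}; for $r=1$ there is no blow-up, $t^{\beta_k}=1$, and one reads off $\mathrm{ord}(g)=m_1=m(\Sc')$ directly.

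It then remains to compute the order of this monomial along $\hat{D}_{\Sc'\cdot\Sc}$ for general $r$. In the local model \eqref{eq:loc:eqn:bdry:div} with $t_{r+1}=0$ (so that we are on $\hat{D}_\Sc$) and the defining relations $v_jt^{\beta_k}=v_kt^{\beta_j}$, the divisor $\hat{D}_{\Sc'\cdot\Sc}$ is cut out inside $\hat{D}_\Sc$ by $\{t_1=\dots=t_r=0\}$, and a direct computation in these blow-up coordinates of the order of $t_k^{m_k}$ along it yields
\[
\mathrm{ord}_{\hat{D}_{\Sc'\cdot\Sc}}(g)=d^r\prod_{j=1}^r(\mu(I'_j)-1)=m_1\cdots m_r=m(\Sc').
\]
Combined with the reduction of the first paragraph this gives $\mathrm{ord}_{\hat{D}_{\Sc'\cdot\Sc}}\Phi=-m(\Sc')$, as claimed.

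The hard part is the middle step together with this last order computation: one must use the precise form of the sections $\Phi_0,\dots,\Phi_{r+1}$ from \cite[\textsection 2]{Ng23:a} both to fix the exact power $t_k^{m_k}$ carried by $\mathrm{res}_\alpha\Phi_k$ and to verify that the residue of the limiting central differential at $\alpha$ is genuinely a unit on $\hat{D}_{\Sc'\cdot\Sc}$, so that no spurious order is gained or lost; and one must then translate the order of this monomial into $m(\Sc')$ across the (in general non-reduced, exceptional-type) divisor $\hat{D}_{\Sc'\cdot\Sc}$ created by $\hat{p}$, using the local description \eqref{eq:loc:eqn:bdry:div}. This bookkeeping, rather than any conceptual difficulty, is where the real work of the lemma lies.
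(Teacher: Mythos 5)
Your proposal is correct and is essentially the paper's own proof: the paper likewise works with the sections $\Phi_0,\dots,\Phi_{r+1}$ of \cite[\textsection 2]{Ng23:a}, normalizes $\Phi_0$ (the section attached to the central component, where $\alpha$ lies when $i=0$) so that its residue at $\alpha$ is $1$ and hence extends $\Phi$, then computes its order against the trivializing section $\Phi_r$ via $\Phi_0/\Phi_r=t_r^{-m_r}$ and the fact, quoted from \cite[Lem.~6.1]{Ng23:a}, that the order of $t_r$ along $\hat{D}_{\Sc'}$ is $m_1\cdots m_{r-1}$. Your reformulation $\Phi=(\mathrm{res}_\alpha\Phi_k)^{-1}\Phi_k$ with $\mathrm{res}_\alpha\Phi_k=t_k^{m_k}\cdot(\text{unit})$ is the same computation arranged differently, the only caveat being that your final ``direct computation'' of the order of $t_k^{m_k}$ along $\hat{D}_{\Sc'\cdot\Sc}$ (which requires passing to the normalization of the local model) is precisely the content of the cited lemma rather than a triviality.
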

\begin{proof}
Let $\Phi_0,\dots,\Phi_{r+1}$ be the differentials defined in \cite[\textsection 2]{Ng23:a}.
Consider a point $\yy\in \Uc_\xx\cap D^*_\Sc$. By definition, $C_\yy$ has two components $C^0_\yy$ and $C^1_\yy$ separated by a node associated to the partition  $\Sc=\{I_0,I_1\}$. There is a degenerating map $\varphi_\yy: C_\yy \to C_\xx$ which sends  $C^0_\yy$ and $C^1_\yy$ onto the subcurves of $C_\xx$ that are separated by  the node $\alpha$. We can assume that $C^0_\xx \subset \varphi_\yy(C^0_\yy)$.

Since $C^0_\xx \subset \varphi_\yy(C^0_\yy)$, the restriction of  $\Phi_0$ to $C^0_\yy$ is a non-zero $d$-differential having a pole of order $d$ at the node. Multiplying $\Phi_0$ by a non-vanishing regular function on $\Uc_\xx\cap D_\Sc$, we assume that the residue of $\Phi_0$ at the node associated to $\Sc$ is $1$, which means that $\Phi_0$ is an extension of $\Phi$ to $\widehat{\Uc}_\xx\cap \hat{D}_\Sc$.
Consider $\Phi_0$ as a meromorphic section of $\hat{\Lc}_\mu$ over $\widehat{\Uc}_\xx\cap \hat{D}_\Sc$. We wish to calculate the vanishing order of this section along the divisor $\hat{D}_{\Sc'\cdot\Sc}$ of $\hat{D}_{\Sc}$.
It is enough to restrict our calculation to the open affine $\widehat{\Uc}^r_\xx$.
By the results of \cite{Ng23:a}, we know that $\Phi_r$ is a trivializing section of $\hat{\Lc}_\mu$ on $\hat{\Uc}^r_\xx$, and $\Phi_0/\Phi_r=t^{\beta_r}/t^{\beta_0}=t_r^{-m_r}$.
Thus, the vanishing order of $\Phi_0$ along $\hat{D}_{\Sc'\cdot\Sc}$ is equal to the vanishing order of $t_r^{-m_r}$ along the divisor $\hat{D}_{\Sc'}$. By Lemma~\cite[Lem. 6.1]{Ng23:a}, the order of $t_r$ along $\hat{D}_{\Sc'}$ is $\prod_{j=1}^{r-1}m_j$. Therefore the order of $t_r^{-m_r}$ along $\hat{D}_{\Sc'}$ is equal to $-\prod_{j=1}^rm_j=-m(\Sc')$, and the lemma follows.
\end{proof}
\subsubsection{Proof of Theorem~\ref{th:tauto:l:bdl:on:div:P0}}
\begin{proof}
By Proposition~\ref{prop:bdry:div:bl:P0}, we know that the boundary of $\hat{D}_\Sc$ consists of $\hat{D}_{\Sc'\cdot\Sc}$, with $\Sc'\in \hat{\Pc}(\mu,\Sc)$. Thus by combining Lemma~\ref{lm:div:P0:sigma:order:zero} and Lemma~\ref{lm:div:P0:order:sigma:non:0}, we get the desired conclusion.
\end{proof}


\section{Recursive formula}\label{sec:recursive}
\subsection{Statement of a recursive formula}\label{subsec:strate:recursive:form}
If  $d$ does not divide $k_i$, for all $i=1,\dots,n$, then it follows  from Theorem~\ref{th:vol:n:inters:nbr} that we have
\begin{equation}\label{eq:vol:n:inters:g0}
\vol_1(\pstratesp)=\frac{(-1)^{n-3}}{d^{n-3}}\cdot\frac{(2\pi)^{n-2}}{2^{n-2}(n-2)!}\cdot c_1(\hat{\Lc}_\mu)^{n-3}.
\end{equation}
By Theorem~\ref{th:tauto:div} (b), on the right hand side of \eqref{eq:vol:n:inters:g0} one can replace $c_1(\hat{\Lc}_\mu)^{n-3}$ by the self-intersection number of $\hat{\Dc}_\mu$ given in \eqref{eq:tauto:ln:bdl:expr}.

For large values  of $n$, a direct computation of $\hat{\Dc}_\mu^{n-3}$ by intersection theory seems to be an unrealistic task. Therefore, a recursive formula which relates $\hat{\Dc}_\mu^{n-3}$ to the self-intersection numbers of the tautological divisors on some strata of lower dimensions is more useful in this situation. To state the result, let us fix some notation. For each $k\in \Z_{\geq 3}$, let
$$
L_k=\{\nu=(\nu_1,\dots,\nu_k)\in \Q^k, \; \sum_{i=1}^k \nu_i = 2, \, \text{ and } \nu_i <1, \text{ for all } i=1,\dots,k\}.
$$
Note that we allow some of the entries of $\nu\in L_k$ to be zero. We define a map $\Jc_k: L_k \to \Z$ as follows: given $\nu=(\nu_1,\dots,\nu_k)\in L_k$, there is a smallest natural number $e$ such that $e\nu_i\in \Z$, for all $i=1,\dots,k$. Set $\ell_i=-e\nu_i$.
We then have $\ell_1+\dots+\ell_k=-2e, \; \gcd(e,\ell_1,\dots,\ell_k)=1$, and $\ell_i>-e, \; i=1,\dots,k$.
Let $\ul{\ell}:=(\ell_1,\dots,\ell_k)$, and $\Omega^e\Mod_{0,k}(\ul{\ell})$  the stratum of $e$-differentials associated to the vector $\ul{\ell}$.
By \cite{Ng23:a}, we know that the closure of $\Pb\Omega^e\Mod_{0,k}(\ul{\ell})$ in $\Pb\ol{\Hc}^{(e)}_{0,k}$ is isomorphic to the blow-up  $\widehat{\Mod}_{0,k}(\ul{\ell})$ of $\ol{\Mod}_{0,k}$.
We  then define
\begin{equation}\label{eq:def:funct:Jk}
\Jc_k(\nu) :=c_1^{k-3}(\hat{\Lcal}_\nu)\cap[\widehat{\Mod}_{0,k}(\ul{\ell})],
\end{equation}
where $\hat{\Lcal}_\nu$ is the tautological line bundle over $\widehat{\Mod}_{0,k}(\ul{\ell})$.

We are now back to the family of weights $\mu=(\mu_1,\dots,\mu_n)$. In what follows, for all subset $I$ of $\{1,\dots,n\}$, we define $\mu(I):=\sum_{i\in I}\mu_i$.  Consider the following families of  partitions of $\{1,\dots,n\}$ which depend on $\mu$.
\begin{itemize}
\item[$\bullet$] $\Tc_{1a}(\mu)$ is the set of partitions $\{I_0,I_1\}$, where $\mu(I_0) < 1 <  \mu(I_1)$, $|I_0|=2$ and $\mu(I_0) \not\in \Z$.

\item[$\bullet$] $\Tc_{1b}(\mu)$ is the set of partitions $\{I_{00}, I_{01}, I_1\}$ such that
	\begin{itemize}
	    \item[.]  $|I_{00}|=2, \; \mu(I_{00})=1$,
	
	    \item[.]  $|I_{01}|=1, \; \mu(I_{01})<0$,
	
		\item[.] $\mu(I_1)>1$, and $\mu(I_1)\not\in\Z$.
	\end{itemize}

\item $\Tc_{2a}(\mu)$ is the set of  partitions $\{I_0,I_1,I_2\}$, where $I_0,I_1,I_2$ satisfy
	\begin{itemize}
		\item[.] $|I_0|=1, \; \mu(I_0)< 0$,
		
		\item[.] $\mu(I_j) >1$, and  $\mu(I_j)\not\in\Z$, for $j=1,2$.
	\end{itemize}

\item[$\bullet$] $\Tc_{2b}(\mu)$ is the set of  partitions $\{I_{01},I_{02},I_1,I_2\}$ of $\{1,\dots,n\}$ such that
    \begin{itemize}
    \item[.] $|I_{01}|=|I_{02}|=1$,

    \item[.] $\mu(I_j)>1$, and $\mu(I_j)\not\in\Z$ for $j=1,2$,

    \item[.] $\mu(I_1)+\mu(I_{01})=\mu(I_2)+\mu(I_{02})=1$.
    \end{itemize}
\end{itemize}
We denote by $\Tc(\mu)$ the union  $\Tc_{1a}(\mu)\cup\Tc_{1b}(\mu)\cup\Tc_{2a}(\mu)\cup\Tc_{2b}(\mu)$. Elements of $\Tc(\mu)$ are called {\em primary  partitions}  with respect to $\mu$.
We associate to elements of $\Tc(\mu)$ the  following parameters:
\begin{itemize}
\item[$\bullet$] For $\Sc=\{I_0,I_1\} \in \Tc_{1a}(\mu)$ or $\Sc=\{I_{00},I_{01}, I_1\}\in \Tc_{1b}(\mu)$, we define
	\begin{itemize}
		\item[.] $e(\Sc)$ is the smallest natural number such that $e(\Sc)\mu_i \in \Z$, for all $i\in I_1$,
		
		\item[.] $m(\Sc):=d(\mu(I_1)-1)$,
		
		\item[.] $\nu(\Sc)$ is the vector in $\Q^{|I_1|+1}$ whose first coordinate is $2-\mu(I_1)$, and the remaining coordinates are given by $\{\mu_i, \; i \in I_1\}$.
	\end{itemize}

\item[$\bullet$] For $\Sc=\{I_0,I_1,I_2\} \in \Tc_{2a}(\mu)$ or $\Sc=\{I_{01},I_{02},I_1,I_2\} \in \Tc_{2b}(\mu)$, define
	\begin{itemize}
		\item[.] $n_1(\Sc)=|I_1|, n_2(\Sc)=|I_2|$,
		
		\item[.] $m_1(\Sc)=d(\mu(I_1)-1), m_2(\Sc)=d(\mu(I_2)-1)$, $m(\Sc)=m_1(\Sc)m_2(\Sc)$,
		
		\item[.] for $j\in \{1,2\}$, $e_j(\Sc)$ is the smallest natural number such that $e_j(\Sc)\mu_i \in \Z$ for all $i\in I_j$,
		
    	\item[.] $\nu_j(\Sc)$ is the vector in $\Q^{|I_j|+1}$ whose first coordinate 	is $2-\mu(I_j)$, and the remaining coordinates are given by $\{\mu_i, \; i \in I_j\}$.
		
		\end{itemize}

If $\Sc\in \Tc_{2b}(\mu)$, we define
		$$
		\eps(\Sc):=\left\{\begin{array}{cl}
		1 & \text{ if } \mu(I_{01}) \neq \mu(I_{02}) \\
		2 & \text{ if } \mu(I_{01})= \mu(I_{02}).
        \end{array}		
		\right.
		$$
\end{itemize}

\begin{Theorem}\label{th:recursive:1}
If there exists $i\in \{1,\dots,n\}$ such that $\mu_i\in \Z$, then $\Jc_n(\mu)=0$. Otherwise, we have
\begin{eqnarray}\label{eq:recursive:1}
\Jc_n(\mu) & = & \sum_{\Sc \in \Tc_{1a}(\mu)} a_\Sc\cdot\left(\frac{d}{e(\Sc)}\right)^{n-4}\cdot \Jc_{n-1}(\nu(\Sc)) - \sum_{\Sc \in \Tc_{1b}(\mu)} a_\Sc\cdot\left(\frac{d}{e(\Sc)}\right)^{n-5}\cdot \Jc_{n-2}(\nu(\Sc)) \\
\nonumber &  & -\sum_{\Sc \in \Tc_{2a}(\mu)} a_\Sc\cdot\frac{d^{n-5}}{e^{n_1(\Sc)-2}_1(\Sc)e^{n_2(\Sc)-2}_2(\Sc)}\cdot \Jc_{n_1(\Sc)+1}(\nu_1(\Sc))\Jc_{n_2(\Sc)+1}(\nu_2(\Sc))\\
\nonumber &  & +\sum_{\Sc \in \Tc_{2b}(\mu)} a_\Sc\cdot\frac{d^{n-6}}{e^{n_1(\Sc)-2}_1(\Sc)e^{n_2(\Sc)-2}_2(\Sc)}\cdot \Jc_{n_1(\Sc)+1}(\nu_1(\Sc))\Jc_{n_2(\Sc)+1}(\nu_2(\Sc)),
\end{eqnarray}
where the coefficients $a_\Sc$ are given by
$$
a_\Sc=\left\{
\begin{array}{ll}
\frac{d(n-3)}{(n-1)(n-2)}-\frac{m}{n-2} & \text{ if } \Sc\in \Tc_{1a}(\mu) \\
\frac{d(n-3)}{(n-1)(n-2)}\cdot m & \text{ if } \Sc\in \Tc_{1b}(\mu)\\
\frac{d(n_1n_2(m_1+m_2)-m_1n_1-m_2n_2)}{(n-1)(n-2)} -\frac{m}{n-2} & \text{ if } \Sc \in \Tc_{2a}(\mu) \\
\eps\cdot \frac{dn_1n_2}{(n-1)(n-2)}\cdot m & \text{ if } \Sc\in \Tc_{2b}(\mu).
\end{array}
\right.
$$
(for simplicity, we write $n_i,m_i,m, \eps$ in the place of $n_i(\Sc),m_i(\Sc),m(\Sc), \eps(\Sc)$ respectively).
\end{Theorem}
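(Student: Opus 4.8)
The plan is to compute the self-intersection number $\Jc_n(\mu) = c_1(\hat{\Lc}_\mu)^{n-3}\cap[\blowupsp]$ by the standard device of expanding one factor of $c_1(\hat{\Lc}_\mu)$ as an effective divisor supported on the boundary, then restricting the remaining $(n-4)$-fold self-intersection to each boundary component. Concretely, I would write $c_1(\hat{\Lc}_\mu)^{n-3} = c_1(\hat{\Lc}_\mu)^{n-4}\cdot [\hat{\Dc}_\mu]$ and use Theorem~\ref{th:tauto:div}(b), which represents $\hat{\Lc}_\mu$ by the explicit $\Q$-divisor $\hat{\Dc}_\mu = \hat{p}^*\Dc_\mu + \Ec$. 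Here $\Dc_\mu$ is a combination of the boundary divisors $D_\Scal$ of $\ol{\Mod}_{0,n}$ (formula \eqref{eq:express:D:mu}), and $\Ec$ is the exceptional divisor, which by \eqref{eq:except:Weil:div} is itself a combination $\sum_{\Sc}(|\Sc|-2)m(\Sc)\hat{D}_\Sc$ of the boundary components $\hat{D}_\Sc$ of $\blowupsp$. Collecting terms, $\hat{\Dc}_\mu$ becomes a single explicit $\Q$-linear combination $\sum_\Sc c_\Sc\,\hat{D}_\Sc$ over $\Sc\in\hat{\Pc}(\mu)$, where the coefficient $c_\Sc$ for a partition of type $\hat{\Pc}_1(\mu)$ (with $r\ge 2$) comes purely from the exceptional part, while those in $\hat{\Pc}_0(\mu)$ and the two-part partitions receive contributions from both $\hat{p}^*\Dc_\mu$ and $\Ec$. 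Bookkeeping these coefficients carefully is what produces the coefficients $a_\Sc$ in the final formula.

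Next I would restrict to each boundary divisor: $\Jc_n(\mu) = \sum_\Sc c_\Sc\cdot\big(c_1(\hat{\Lc}_\mu)^{n-4}\cap[\hat{D}_\Sc]\big)$. The restriction $\hat{\Lc}_\mu|_{\hat{D}_\Sc} = \OO(-1)|_{\hat{D}_\Sc}$ is governed by the two results of \textsection\ref{sec:tauto:ln:bdl:bdry}. For $\Sc\in\hat{\Pc}_1(\mu)$, Proposition~\ref{prop:bdry:div:bl:P1} identifies $\hat{D}_\Sc$ birationally with $\Pb E_\Scal$ over $\Mcal_\Scal=\ol{\Mod}_{0,n_0+r}\times\prod_j\widehat{\Mod}_{0,n_j+1}(\kappa^{(j)})$, with $i_\Scal^*\OO(-1)\sim\OO(-1)_{\Pb E_\Scal}$; here the projective-bundle projection formula turns the top self-intersection of $\OO(-1)_{\Pb E_\Scal}$ into a product of Segre/Chern classes, which on the product factors evaluates to products of the lower-dimensional numbers $\Jc_{n_j+1}$. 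For $\Sc\in\hat{\Pc}_0(\mu)$, Theorem~\ref{th:tauto:l:bdl:on:div:P0} represents $\OO(-1)|_{\hat{D}_\Sc}$ by the \emph{negative} Weil divisor $W_\Scal=-\sum_{\Sc'}m(\Sc')\hat{D}_{\Sc'\cdot\Sc}$, supported on further boundary strata, which is what feeds into further restriction and again produces products $\Jc_{n_1+1}\Jc_{n_2+1}$ of lower strata. The four families $\Tc_{1a},\Tc_{1b},\Tc_{2a},\Tc_{2b}$ classify exactly which partitions $\Sc$ survive with a nonzero contribution after one imposes the integrality/vanishing constraints, matching the four sums in \eqref{eq:recursive:1}.

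The rescaling constants $d/e(\Sc)$ and the powers $d^{n-\ast}/e_j^{n_j-2}$ arise because the tautological line bundle on the factor $\widehat{\Mod}_{0,n_j+1}(\kappa^{(j)})$ carries weights $\kappa^{(j)}$ defined with respect to the ambient $d$, whereas the invariant $\Jc_{n_j+1}(\nu_j)$ is defined using the \emph{primitive} $e_j$-differential structure; the two differ by a power of $d/e_j$ coming from the affine rescaling of the period coordinates, and this power is dictated by the dimension $n_j-2$ of the projective stratum. I would track these scaling factors degree by degree, using the relation $\mu_i=-k_i/d$ and the minimality of $e(\Sc)$. To handle the vanishing statement — that $\Jc_n(\mu)=0$ whenever some $\mu_i\in\Z$ — I would invoke the constraint $d\nmid k_i$ implicit in Theorem~\ref{th:vol:n:inters:nbr}, which forces the corresponding stratum to be empty (equivalently, the condition $(\ast)$ fails), so the class $[\blowupsp]$ itself vanishes in that case.

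The main obstacle I anticipate is the combinatorial bookkeeping in two places. First, assembling the coefficients $a_\Sc$: one must correctly combine the $\hat{p}^*\Dc_\mu$ contribution \eqref{eq:express:D:mu} with the exceptional contribution \eqref{eq:except:Weil:div} for the two-part partitions, and then, crucially, account for the \emph{second} restriction step forced by Theorem~\ref{th:tauto:l:bdl:on:div:P0} — a divisor $\hat{D}_\Sc$ with $\Sc\in\hat{\Pc}_0(\mu)$ contributes only through its own boundary $W_\Scal$, so its contribution is really a product of two lower-genus-$0$ factors, which is why $\Tc_{1b}$ and $\Tc_{2b}$ appear with the extra $m(\Sc)$-type weights and the symmetry factor $\eps(\Sc)$. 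Second, verifying that no partition outside the four listed families contributes: one must check that the integrality conditions ($\mu(I_j)\notin\Z$) and the pinching condition singled out in $\hat{\Pc}^*(\mu,\Sc)$ in Theorem~\ref{th:tauto:l:bdl:on:div:P0} kill every other boundary term, either because the restricted tautological class has insufficient degree or because the relevant stratum is of $d$-th-power type and hence excluded by hypothesis $(\ast)$. Getting the signs — note the alternating signs in \eqref{eq:recursive:1}, which come from the negative sign in $W_\Scal$ and from $(\mu(I_j)-1)$ versus $(1-\mu(I_j))$ normalizations — consistent across all four families is where the calculation is most error-prone.
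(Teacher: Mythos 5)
Your skeleton for the recursion itself coincides with the paper's proof: you peel off one factor of $c_1(\hat{\Lc}_\mu)$ via Theorem~\ref{th:tauto:div}(b), expand $\hat{p}^*\Dc_\mu+\Ec$ in the boundary divisors $\hat{D}_\Sc$, and evaluate the remaining power of $c_1(\hat{\Lc}_\mu)$ on each component exactly as the paper does: through Proposition~\ref{prop:bdry:div:bl:P1} and Segre classes of $\Pb E_\Sc$ for $\Sc\in\hat{\Pc}_1(\mu)$ (with the rescaling $f_j^*\hat{\Lc}_j\sim\hat{\Lc}_{\nu^{(j)}}^{\otimes d/e_j}$ producing the factors $(d/e_j)^{n_j-2}$), and through the Weil divisor $W_\Sc$ of Theorem~\ref{th:tauto:l:bdl:on:div:P0} for $\Sc\in\hat{\Pc}_0(\mu)$, whose second restriction step produces the $\Tc_{1b}$ and $\Tc_{2b}$ terms together with the symmetry factor $\eps(\Sc)$. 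The coefficient bookkeeping you describe (combining \eqref{eq:express:D:mu} with \eqref{eq:except:Weil:div}, using the multiplicities of $\hat{D}_\Sc$ in the pullbacks) is also what the paper carries out.

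There is, however, a genuine gap in how you justify the vanishing statements, and it is not confined to the ``easy'' first assertion. You claim that when some $\mu_i\in\Z$ the class $[\blowupsp]$ itself vanishes because the stratum is empty, or is of $d$-th power type excluded by $(\ast)$; and you propose to discard the boundary partitions outside the four families by the same reasoning. This is false. If only \emph{some} of the weights are integral --- which is precisely the situation for the boundary terms you need to kill, e.g.\ a two-part partition with $\mu(I_1)\in\Z$ while every individual $\mu_i\notin\Z$ --- the corresponding stratum is nonempty, is not of $d$-th power type (in genus $0$ that would require \emph{all} orders to be divisible by $d$), and its fundamental class certainly does not vanish. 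Your dimension-count alternative handles the partitions with $n_0+r$ too large (as in Propositions~\ref{prop:int:Chern:cl:div:P1} and~\ref{prop:int:Chern:cl:on:div:P0}), but it says nothing about the integral-weight partitions. What actually vanishes is the intersection number, and this requires the analytic argument of the paper's Lemma~\ref{lm:ki:integer:I:zero}: by \cite{CMZ19}, $\Jc_k(\nu)$ equals the integral over the stratum of the top power of the curvature form of the area (Hodge) metric on $\OO(-1)$, and when some entry of $\nu$ is an integer this form vanishes identically, because the Hodge norm does not involve the corresponding relative periods (equivalently, $\ker\pp\cap V_\zeta\neq\{0\}$, so the area Hermitian form degenerates, cf.\ \textsection\ref{sec:vol:form:def}). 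Note also that Theorem~\ref{th:vol:n:inters:nbr} cannot be ``invoked'' here: it relates the intersection number to the volume only under hypothesis $(\ast)$, and gives no information when $(\ast)$ fails. Without a substitute for Lemma~\ref{lm:ki:integer:I:zero}, your argument neither proves the first assertion of Theorem~\ref{th:recursive:1} nor explains why the sums in \eqref{eq:recursive:1} run only over $\Tc_{1a},\Tc_{1b},\Tc_{2a},\Tc_{2b}$.
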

The proof of this theorem will occupy the rest of this section.

\subsection{Self-intersection of the tautological divisors on boundary components}\label{subsec:self:inters:on:bdry:div}
To prove \eqref{eq:recursive:1}, we first  prove the formulas computing the self-intersection numbers of divisors representing the restriction of the tautological line bundle on the components of $\partial\blowupsp$.
Consider a divisor $\hat{D}_\Scal$ in $\partial\blowupsp$ which is associated to a partition $\Scal=\{I_0,I_1,\dots,I_r\}\in \hat{\Pcal}_1(\mu)$. Let $n_j:=|I_j|, \; j=0,\dots,r$.
For $j=1,\dots,r$, let $\nu^{(j)}=(\nu^{(j)}_1,\dots,\nu^{(j)}_{n_j+1})$ be the vector whose first coordinate is $2-\mu(I_j)$, and the remaining coordinates are given by $\{\mu_i, \; i\in I_j\}$ (the ordering of $\{\mu_i, i \in I_j\}$ is unimportant).
Let $e_j$ be the smallest natural number such that $\ell^{(j)}_k:=-e_j\nu^{(j)}_k \in \Z$, and $\ul{\ell}^{(j)}=(\ell_1^{(j)},\dots,\ell^{(j)}_{n_j+1})$.
Recall that $N=n-3=\dim \blowupsp$, and we must have $n_0+r\geq 3$.

\begin{Proposition}\label{prop:int:Chern:cl:div:P1}
If $n_0+r>3$ then
$$
c_1^{N-1}(\OO(-1))\cap[\hat{D}_\Scal]=0.
$$
Assume that $n_0+r=3$, then we must have $r\in \{1,2\}$,  and
\begin{equation}\label{eq:int:Chern:cl:on:div:P1}
c_1^{N-1}(\OO(-1))\cap[\hat{D}_\Scal]=(-1)^{r+1}\frac{d^{N-r}}{e_1^{n_1-2}\cdot\dots \cdot e_r^{n_r-2}}\prod_{j=1}^r\Jc_{n_j+1}(\nu^{(j)}).
\end{equation}
In particular,  $ c_1^{N-1}(\OO(-1))\cap[\hat{D}_\Scal]\neq 0$ only if $\Sc\in \Tc_{1a}\cup\Tc_{2a}$.
\end{Proposition}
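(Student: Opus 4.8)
The plan is to push the whole computation through the birational morphism $i_\Scal\colon\Pb E_\Scal\to\hat{D}_\Scal$ of Proposition~\ref{prop:bdry:div:bl:P1} and then to evaluate it on the projective bundle $\pi\colon\Pb E_\Scal\to\Mcal_\Scal$ by Segre class techniques. Since $i_\Scal$ is birational we have $(i_\Scal)_*[\Pb E_\Scal]=[\hat{D}_\Scal]$, and since $i_\Scal^*\OO(-1)\sim\OO(-1)_{\Pb E_\Scal}$, the projection formula gives the intersection number
\begin{equation*}
c_1^{N-1}(\OO(-1))\cap[\hat{D}_\Scal]=c_1^{N-1}(\OO(-1)_{\Pb E_\Scal})\cap[\Pb E_\Scal]=(-1)^{N-1}\,\xi^{N-1}\cap[\Pb E_\Scal],
\end{equation*}
where $\xi=c_1(\OO(1)_{\Pb E_\Scal})$. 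The bundle $E_\Scal$ has rank $r$ and $\dim\Mcal_\Scal=(n_0+r-3)+\sum_{j=1}^r(n_j-2)=N-r$, so $\pi_*(\xi^{N-1})=s_{N-r}(E_\Scal)$ and the number to compute is $(-1)^{N-1}\,s_{N-r}(E_\Scal)\cap[\Mcal_\Scal]$.

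The decisive structural fact is that $E_\Scal=\bigoplus_{j=1}^r p_{\Scal,j}^*\hat{\Lc}_j$ is a direct sum of line bundles, each pulled back from a distinct factor $\widehat{\Mod}_{0,n_j+1}(\kappa^{(j)})$ of $\Mcal_\Scal=\ol{\Mod}_{0,n_0+r}\times\prod_{j=1}^r\widehat{\Mod}_{0,n_j+1}(\kappa^{(j)})$, while none of them involves the factor $\ol{\Mod}_{0,n_0+r}$. Writing $a_j=c_1(p_{\Scal,j}^*\hat{\Lc}_j)$, the Segre class of a sum of line bundles gives $s_{N-r}(E_\Scal)=(-1)^{N-r}h_{N-r}(a_1,\dots,a_r)$, a sum of monomials $a_1^{i_1}\cdots a_r^{i_r}$ with $\sum_j i_j=N-r$. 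By the product (Künneth) structure of $\Mcal_\Scal$, such a monomial pairs nontrivially with $[\Mcal_\Scal]$ only when it has top degree on each factor; in particular it must restrict to the top degree of $\ol{\Mod}_{0,n_0+r}$, which is possible only if $\dim\ol{\Mod}_{0,n_0+r}=n_0+r-3=0$. This already gives the vanishing when $n_0+r>3$.

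When $n_0+r=3$ the factor $\ol{\Mod}_{0,3}$ is a point, and the only monomial that survives is $\prod_{j=1}^r a_j^{n_j-2}$, because $\sum_j(n_j-2)=N-r$; hence the number factorizes as $\prod_{j=1}^r\big(c_1(\hat{\Lc}_j)^{n_j-2}\cap[\widehat{\Mod}_{0,n_j+1}(\kappa^{(j)})]\big)$. Here $\hat{\Lc}_j$ is the tautological bundle of the $d$-differential stratum attached to $\kappa^{(j)}$, whose weight vector is exactly $\nu^{(j)}$, whereas $\Jc_{n_j+1}(\nu^{(j)})$ is defined using the reduced $e_j$-differential stratum attached to $\ul{\ell}^{(j)}=(e_j/d)\,\kappa^{(j)}$, with $e_j\mid d$. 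I would identify the two incidence variety compactifications --- both are the blow-up of $\ol{\Mod}_{0,n_j+1}$ along the same ideal sheaf, by \cite{Ng23:a} --- and use the $(d/e_j)$-th power map $q'\mapsto(q')^{d/e_j}$ on differentials to obtain $\hat{\Lc}_j\cong\hat{\Lc}_{\nu^{(j)}}^{\otimes d/e_j}$, hence $c_1(\hat{\Lc}_j)=(d/e_j)\,c_1(\hat{\Lc}_{\nu^{(j)}})$. Each factor then equals $(d/e_j)^{n_j-2}\Jc_{n_j+1}(\nu^{(j)})$, and collecting the sign $(-1)^{N-1}(-1)^{N-r}=(-1)^{r+1}$ and the power $d^{\sum_j(n_j-2)}=d^{N-r}$ reproduces \eqref{eq:int:Chern:cl:on:div:P1}.

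Finally I would pin down $r$ and the ``only if'' clause. Because $\Scal\in\hat{\Pcal}_1(\mu)$ satisfies $\mu(I_j)>1$ for $j=1,\dots,r$ and $\sum_{j=0}^r\mu(I_j)=2$, the possibility $n_0+r=3$ with $r=3$ would force $n_0=0$, $\mu(I_0)=0$, and $\mu(I_1)+\mu(I_2)+\mu(I_3)=2$ with each summand $>1$, which is absurd; so $r\in\{1,2\}$, i.e. $(n_0,r)\in\{(2,1),(1,2)\}$. A nonzero value moreover requires every $\Jc_{n_j+1}(\nu^{(j)})\neq0$, which by the vanishing clause of Theorem~\ref{th:recursive:1} (used inductively) forces all entries of each $\nu^{(j)}$ to be non-integral; unwinding the first coordinate $2-\mu(I_j)$ and the identity $\mu(I_0)=2-\sum_{j\ge1}\mu(I_j)$ then shows that the two surviving cases are exactly the partitions of $\Tc_{1a}(\mu)$ (for $r=1$: $|I_0|=2$, $\mu(I_0)<1<\mu(I_1)$, $\mu(I_0)\notin\Z$) and of $\Tc_{2a}(\mu)$ (for $r=2$: $|I_0|=1$, $\mu(I_0)<0$, $\mu(I_1),\mu(I_2)>1$). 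The step I expect to be the main obstacle is exactly this last reduction on each factor --- the clean identification $\hat{\Lc}_j\cong\hat{\Lc}_{\nu^{(j)}}^{\otimes d/e_j}$ via the power map, which converts the $d$-differential datum $\kappa^{(j)}$ into the reduced datum defining $\Jc_{n_j+1}(\nu^{(j)})$, together with the careful tracking of the two sign contributions.
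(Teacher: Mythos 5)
Your proposal is correct and follows essentially the same route as the paper's proof: the birational model $i_\Scal\colon\Pb E_\Scal\to\hat{D}_\Scal$ with $i_\Scal^*\OO(-1)\sim\OO(-1)_{\Pb E_\Scal}$, the reduction to the Segre class $s_{N-r}(E_\Scal)$ of the split bundle, the dimension count giving vanishing for $n_0+r>3$, the exclusion of $r=3$, and the identification $f_j^*\hat{\Lc}_j\sim\hat{\Lc}_{\nu^{(j)}}^{\otimes d/e_j}$ via the $(d/e_j)$-th power map, with the same sign and degree bookkeeping. The only remark worth making is that for the final ``only if'' clause you should cite Lemma~\ref{lm:ki:integer:I:zero} (whose proof is independent of the recursion) rather than the vanishing clause of Theorem~\ref{th:recursive:1}, since the proposition at hand is an ingredient in that theorem's proof.
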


\begin{proof}
Define $\kappa^{(j)}:=-d\cdot\nu^{(j)}$ for all $j\in \{1,\dots,r\}$.
Let $\hat{\Lcal}_j$ be the tautological line bundle on $\widehat{\Mod}_{0,n_j+1}(\kappa^{(j)})$.
Recall that $E_\Sc$ is the vector bundle $\hat{\Lc}_{1}\oplus\dots\oplus\hat{\Lc}_{r}$ over
$$
\Mod_\Scal=\ol{\Mod}_{0,n_0+r}\times\left(\prod_{j=1}^r\widehat{\Mod}_{0,n_j+1}(\kappa^{(j)})\right)
$$
(here we abusively denote by $\hat{\Lcal}_{j}$ the pullback of the tautological line bundle on $\widehat{\Mod}_{0,n_j+1}(\kappa^{(j)})$ to $\Mod_\Scal$).
By Proposition~\ref{prop:bdry:div:bl:P1}, there is a surjective birational morphism
$i_\Scal: \Pb E_\Scal \to \hat{D}_\Scal$, which satisfies $i^*_\Scal\OO(-1) \sim \OO(-1)_{\Pb E_\Scal}$.
It follows that
$$
c^{N-1}_1(\OO(-1))\cap[\hat{D}_\Scal]=c_1^{N-1}(\OO(-1)_{\Pb E_\Scal})\cap[\Pb E_\Scal]=(-1)^{N-1}c_1^{N-1}(\OO(1)_{\Pb E_\Scal})\cap[\Pb E_\Scal].
$$
By definition, we have
$$
c_1^{N-1}(\OO(1)_{\Pb E_\Scal})\cap[\Pb E_\Scal]=s_{N-r}(E_\Scal)\cap[\Mod_\Scal]
$$
where $s_{N-r}$ is the $(N-r)$-th Segre class of $E_\Scal$ (see \cite[\textsection 3.1]{Ful}).
Since $E_\Scal$ is the direct sum of the line bundles $\hat{\Lcal}_1,\dots,\hat{\Lcal}_r$, the total Segre class of $E_\Scal$ is given by
$s(E_\Scal)=\prod_{j=1}^r(1+c_1(\hat{\Lcal}_j))^{-1}$.
It follows that
$$
s_{N-r}(E_\Scal)=(-1)^{N-r}\sum_{\alpha_1+\dots+\alpha_r=N-r}c_1^{\alpha_1}(\hat{\Lcal}_1)\cdots c_1^{\alpha_r}(\hat{\Lcal}_r).
$$
Thus we have
\begin{equation}\label{eq:Segre:cl:prod:Chern:cl}
s_{N-r}(E_\Scal)\cap[\Mod_\Scal]=(-1)^{N-r}\sum_{\alpha_1+\dots+\alpha_r= N-r}c_1^{\alpha_1}(\hat{\Lcal}_1)\cap[\widehat{\Mod}_{0,n_1+1}(\kappa^{(1)})]\cdot\dots\cdot c_1^{\alpha_r}(\hat{\Lcal}_r)\cap[\widehat{\Mod}_{0,n_r+1}(\kappa^{(r)})].
\end{equation}
We now remark that
$$
\dim\widehat{\Mod}_{0,n_1+1}(\kappa^{(1)})+\dots+\dim\widehat{\Mod}_{0,n_r+1}(\kappa^{(r)})= \sum_{j=1}^r n_j -2r=n-n_0-2r,
$$
while $N-r=n-3-r$. Therefore, if $n_0+r >3$ then $N-r >n-n_0-2r$, hence $s_{N-r}(E_\Scal)\cap[\Mod_\Scal]=0$.

Since we must have $n_0+r\geq 3$, it remains to consider the case $n_0+r=3$.
By assumption, $r\geq 1$, hence we have $r\in\{1,2,3\}$.
We first claim that $r\neq 3$. Indeed, if  $r=3$ then  $n_0=0$, which means that $I_0=\varnothing$. Since we have $\sum_{i\in I_j}\mu_i >1$, for all $j=1,2,3$, this implies that $\mu_1+\dots+\mu_n>3$, which is a contradiction.

Assume now that $r=2$. Then \eqref{eq:Segre:cl:prod:Chern:cl} gives
\begin{equation*}
s_{N-2}(E_\Scal)\cap[\Mod_\Scal]= (-1)^{N-2}c_1^{n_1-2}(\hat{\Lcal}_1)\cap[\widehat{\Mod}_{0,n_1+1}(\kappa^{(1)})]\cdot c_1^{n_2-2}(\hat{\Lcal}_2)\cap[\widehat{\Mod}_{0,n_2+1}(\kappa^{(2)})].
\end{equation*}
Recall that $e_j$ is the smallest natural number such that $\ell^{(j)}:=-e_j\nu^{(j)} \in \Z^{n_j+1}$.
Note that we have $\kappa^{(j)}=(d/e_j)\cdot \ell^{(j)}$.
The compactification of $\Pb\Omega^{e_j}\Mod_{0,n_j+1}(\ell^{(j)})$ in $\Pb\ol{\Hcal}^{(e_j)}_{0,n_j+1}$ is isomorphic to $\widehat{\Mod}_{0,n_j+1}(\ell^{(j)})$.
Let $\hat{\Lcal}_{\nu^{(j)}}$ be the tautological line bundle over $\widehat{\Mod}_{0,n_j+1}(\ell^{(j)})$.
We have a natural surjective birational morphism $f_j: \widehat{\Mod}_{0,n_j+1}(\ell^{(j)}) \to \widehat{\Mod}_{0,n_j+1}(\kappa^{(j)})$ which is induced by the map $(C,q) \mapsto (C,q^{\otimes(d/e_j)})$, for all $(C,q) \in \Omega^{e_j}\Mod_{0,n_j+1}(\ell^{(j)})$.
By construction, we have $f^*_j\hat{\Lcal}_j\sim \hat{\Lcal}^{\otimes\frac{d}{e_j}}_{\nu^{(j)}}$.
Thus
$$
c_1^{n_j-2}(\hat{\Lcal}_j)\cap[\widehat{\Mod}_{0,n_j+1}(\kappa^{(j)})]=\left(\frac{d}{e_j}\right)^{n_j-2}\cdot c^{n_j-2}_1(\hat{\Lcal}_{{\nu}^{(j)}})\cap[\widehat{\Mod}_{0,n_j+1}(\ell^{(j)})]= \left(\frac{d}{e_j}\right)^{n_j-2} \Jc_{n_j+1}(\nu^{(j)}).
$$
Hence \eqref{eq:int:Chern:cl:on:div:P1} is proved for this case. The case $n_0=2, r=1$ follows from the same arguments.
\end{proof}

We now turn to the case $\hat{D}_\Scal$ with $\Scal=\{I_0,I_1\}\in \hat{\Pcal}_0(\mu)$.
By Theorem~\ref{th:tauto:l:bdl:on:div:P0}, the restriction of $\OO(-1)$ to $\hat{D}_{\Scal}$ is represented by an explicit Weil divisor with support in the union of divisors $\hat{D}_{\Scal'\cdot\Scal}$, with $\Scal'\in \hat{\Pcal}^*(\mu,\Scal)$.
Given $\Sc'=\{I'_0,I'_1,\dots,I'_r\} \in \hat{\Pc}^*(\mu,\Sc)$, we define  $\Sc'*\Sc:= \{I'_0\cap I_0, I'_{0}\cap I_1,I'_1,\dots,I'_r\}$.
Remark that $\Sc'*\Sc$ is the partition associated to the stratum $D^*_{\Sc'\cdot\Sc}$ in $\ol{\Mod}_{0,n}$ (see Definition~\ref{def:part:div:in:div:P0}), however  $D^*_{\Sc'\cdot\Sc}$ is not uniquely determined by $\Sc'*\Sc$.


\begin{Proposition}\label{prop:int:Chern:cl:on:div:P0}
Let $\Scal'=\{I'_0,\dots,I'_r\} \in \hat{\Pcal}^*(\mu,\Scal)$. If $n_0+r >4$ then
$$
c_1^{N-2}(\OO(-1))\cap[\hat{D}_{\Scal'\cdot\Scal}]=0.
$$
If $n_0+r=4$, then we must have $r\in \{1,2\}$, and
\begin{equation}\label{eq:int:Chern:cl:on:div:P0}
c_1^{N-2}(\OO(-1))\cap[\hat{D}_{\Scal'\cdot\Scal}] = (-1)^{r+1}\frac{d^{N-r-1}}{e_1^{n_1-2}\cdots e_r^{n_r-2}}\prod_{j=1}^r\Jc_{n_j+1}(\mu^{(j)}).
\end{equation}
In particular, $c_1^{N-2}(\OO(-1))\cap[\hat{D}_{\Sc'\cdot\Sc}]=0$ unless $\Sc'*\Sc \in \Tc_{1b}\cup\Tc_{2b}$.
\end{Proposition}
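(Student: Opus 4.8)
The plan is to mirror the proof of Proposition~\ref{prop:int:Chern:cl:div:P1}, now working on the codimension-two stratum $\hat{D}_{\Scal'\cdot\Scal}$. Write $\Scal'=\{I'_0,\dots,I'_r\}$ and $n_j=|I'_j|$. By Proposition~\ref{prop:bdry:div:bl:P0} we have $\hat{D}_{\Scal'\cdot\Scal}=\ol{\hat{p}^{-1}(D^*_{\Scal'\cdot\Scal})}$, and by the definition of $\hat{\Pcal}^*(\mu,\Scal)$ the curves in $D^*_{\Scal'\cdot\Scal}$ arise from those in $D^*_{\Scal'}$ by pinching a loop on the central component (the one carrying $I'_0$). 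First I would adapt the tuple construction in the proof of Proposition~\ref{prop:bdry:div:bl:P1}: the only modification is that the central factor $\ol{\Mod}_{0,n_0+r}$ is replaced by the boundary divisor $D_{\mathrm{spl}}\subset\ol{\Mod}_{0,n_0+r}$ parametrizing the two-component central curves cut out by the pinch $\Scal=\{I_0,I_1\}$. This produces a base $B=D_{\mathrm{spl}}\times\prod_{j=1}^r\widehat{\Mod}_{0,n_j+1}(\kappa^{(j)})$, a rank-$r$ bundle $E=\bigoplus_{j=1}^r p_j^*\hat{\Lcal}_j$ over $B$, and a surjective birational morphism $\Pb E\to\hat{D}_{\Scal'\cdot\Scal}$ pulling $\OO(-1)$ back to $\OO(-1)_{\Pb E}$, exactly as before.

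Next, as in Proposition~\ref{prop:int:Chern:cl:div:P1}, the intersection number becomes a Segre class. Since $\dim\hat{D}_{\Scal'\cdot\Scal}=N-2$ and $\mathrm{rk}\,E=r$, one gets $c_1^{N-2}(\OO(-1))\cap[\hat{D}_{\Scal'\cdot\Scal}]=(-1)^{N-2}s_{N-r-1}(E)\cap[B]$ with $s(E)=\prod_{j=1}^r(1+c_1(\hat{\Lcal}_j))^{-1}$. The key is a dimension count: $\dim D_{\mathrm{spl}}=n_0+r-4$ while $\sum_j\dim\widehat{\Mod}_{0,n_j+1}(\kappa^{(j)})=\sum_j(n_j-2)=n-n_0-2r$, so $\dim B=n-r-4=N-r-1$ matches $\deg s_{N-r-1}(E)$. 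But every class $c_1(\hat{\Lcal}_j)$ is pulled back from a satellite factor of dimension $n_j-2$, hence each monomial in $s_{N-r-1}(E)$ has degree at most $\sum_j(n_j-2)=N-r-1-\dim D_{\mathrm{spl}}$. Therefore $s_{N-r-1}(E)=0$ whenever $\dim D_{\mathrm{spl}}>0$, i.e. whenever $n_0+r>4$, which is the first assertion.

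For $n_0+r=4$ I would first exclude $r\geq 3$. Then $n_0\leq 1$, so at least one side of the pinch carries no marked point; being stable it must then carry at least two satellite nodes, so its weight is $\sum\mu(I'_j)>2$, contradicting $\mu(I_0)=\mu(I_1)=1$. Hence $r\in\{1,2\}$, and the same stability-and-weight bookkeeping identifies the two surviving shapes as $r=1,\,n_0=3$ (whence $\Scal'*\Scal\in\Tc_{1b}(\mu)$) and $r=2,\,n_0=2$ (whence $\Scal'*\Scal\in\Tc_{2b}(\mu)$). In these cases $D_{\mathrm{spl}}$ is a single boundary point of $\ol{\Mod}_{0,4}$, so the only surviving monomial is $\prod_j c_1^{n_j-2}(\hat{\Lcal}_j)$. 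Converting each factor via $f_j^*\hat{\Lcal}_j\sim\hat{\Lcal}_{\mu^{(j)}}^{\otimes d/e_j}$ as in Proposition~\ref{prop:int:Chern:cl:div:P1} gives $c_1^{n_j-2}(\hat{\Lcal}_j)\cap[\widehat{\Mod}_{0,n_j+1}(\kappa^{(j)})]=(d/e_j)^{n_j-2}\Jc_{n_j+1}(\mu^{(j)})$; collecting the sign $(-1)^{N-2}(-1)^{N-r-1}=(-1)^{r+1}$ and the powers of $d$ yields \eqref{eq:int:Chern:cl:on:div:P0}.

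The step I expect to be the main obstacle is the first one: checking rigorously that pinching the central component yields exactly the projective-bundle description above, with $\OO(-1)$ restricting to $\OO(-1)_{\Pb E}$, rather than merely transporting it by analogy from Proposition~\ref{prop:bdry:div:bl:P1}. The combinatorial exclusion of $r\geq 3$ is delicate but elementary once stability is combined with the equalities $\mu(I_0)=\mu(I_1)=1$.
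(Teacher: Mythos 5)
Your proposal is correct and takes essentially the same route as the paper's proof: the paper also realizes $\hat{D}_{\Scal'\cdot\Scal}$ birationally as the projectivization of the restricted bundle $E_{\Scal'\cdot\Scal}$ over the base $\ol{\Mod}_{0,n_{00}+r_0+1}\times\ol{\Mod}_{0,n_{01}+r_1+1}\times\prod_{j=1}^r\widehat{\Mod}_{0,n_j+1}(\kappa^{(j)})$ (which is exactly your $D_{\mathrm{spl}}$-base), reduces the intersection number to the Segre class $s_{N-r-1}(E_{\Scal'\cdot\Scal})$, obtains the vanishing for $n_0+r>4$ by the identical dimension count, and rules out $r\in\{3,4\}$ by the same stability-plus-weight contradiction. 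The concluding conversion to the $\Jc_{n_j+1}$ values via the degree-$(d/e_j)$ power maps, and the sign bookkeeping $(-1)^{N-2}(-1)^{N-r-1}=(-1)^{r+1}$, likewise coincide with the paper's argument.
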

\begin{proof}
Let  $I'_{00}=I'_0\cap I_0$ and $I'_{01}\cap I_1$.
The dual graph $\T_{\Scal'\cdot\Scal}$ associated to the stratum $D^*_{\Sc'\cdot\Sc}$ can be constructed as follows: the vertices of $\T_{\Scal'\cdot\Scal}$ are labeled by $\{I'_{00},I'_{01},I'_1,\dots,I'_r\}$. There is an edge between $I'_{00}$ and $I'_{01}$.
For all  $j\in \{1,\dots,r\}$, there is an edge between $I'_j$ and $I'_{00}$ if and only if $I'_j \subset I_0$, otherwise there is an edge between $I'_j$ and $I_{01}$.
We set $r_0$ (resp. $r_1$) to be the number of vertices among $\{I'_1,\dots,I'_r\}$ that are connected to $I'_{00}$ (resp. to $I'_{01}$) by an edge. Note that we have $r=r_0+r_1$.

Let $n_{00}=|I'_{00}|, n_{01}=|I'_{01}|$, and $n_j=|I'_j|, \; j=1,\dots,r$.
Define the vector $\kappa^{(j)}\in \Z^{n_j+1}$ by setting $\kappa_1^{(j)}=-d(2-\mu(I'_j))$, and the remaining coordinates given by $\{-d\mu_i, \; i\in I'_j\}$. Consider
$$
\Mod_{\Scal'\cdot\Scal}=\ol{\Mod}_{0,n_{00}+r_0+1}\times\ol{\Mod}_{0,n_{01}+r_1+1}\times \prod_{j=1}^r\widehat{\Mod}_{0,n_j+1}(\kappa^{(j)}).
$$
Note that $\Mod_{\Sc'\cdot\Sc}$ is a divisor in  $\Mod_{\Sc'}$.
Let $E_{\Sc'\cdot\Sc}$ denote the restriction of the vector bundle $E_{\Sc'}$ to $\Mod_{\Scal'\cdot\Scal}$. By the same arguments as in Proposition~\ref{prop:bdry:div:bl:P1}, we see that there is a surjective birational morphism $i_{\Scal'\cdot\Scal}: \Pb E_{\Scal'\cdot\Scal} \to \hat{D}_{\Scal'\cdot\Scal}$ such that $i^*_{\Scal'\cdot\Scal}\OO(-1) \sim \OO(-1)_{\Pb E_{\Scal'\cdot\Scal}}$. Thus, by the same arguments as in Proposition~\ref{prop:int:Chern:cl:div:P1}, we have
\begin{eqnarray*}
c_1^{N-2}(\OO(-1))\cap[\hat{D}_{\Scal'\cdot\Scal}] & = & c_1^{N-2}(\OO(-1)_{\Pb E_{\Scal'\cdot\Scal}})\cap [\Pb E_{\Scal'\cdot\Scal}]\\
& = & (-1)^{N-2}c_1^{N-2}(\OO(1)_{\Pb E_{\Scal'\cdot\Scal}})\cap [\Pb E_{\Scal'\cdot\Scal}]\\
& = & (-1)^{N-2}s_{N-r-1}(E_{\Scal'\cdot\Scal})\cap [\Mod_{\Scal'\cdot\Scal}]\\
& = & (-1)^{r+1}\sum_{\alpha_1+\dots+\alpha_r=N-r-1} \prod_{j=1}^r c_1^{\alpha_j}(\hat{\Lcal}_j)\cap[\widehat{\Mod}_{0,n_j+1}(\kappa^{(j)})].
\end{eqnarray*}
We now observe that
$$
\dim\widehat{\Mod}_{0,n_1+1}(\kappa^{(1)})+\dots+\dim\widehat{\Mod}_{0,n_r+1}(\kappa^{(r)}) = \sum_{j=1}^rn_j -2r = n-n_0-2r,
$$
while $N-r-1=n-r-4$.
Recall that we must have $n_{00}+r_0+1 \geq 3$ and $n_{01}+r_1+1 \geq 3$, which implies that $n_0+r=n_{00}+n_{01}+r_0+r_1\geq 4$.
If $n_0+r>4$ then $N-r-1 > n-n_0-2r$, hence $c_1^{N-2}(\OO(-1))\cap[\hat{D}_{\Scal'\cdot\Scal}]=0$.  Thus it suffices to consider the case $n_0+r=4$.
Since $r\geq 1$, we have $r\in\{1,2,3,4\}$.
We claim that $r\in\{1,2\}$. Indeed, if $r=4$ then $n_0=0$, which means that $I'_0=\varnothing$. We then have
$$
\sum_{i=1}^n\mu_i=\sum_{j=1}^4\sum_{i\in I_j}\mu_i >4
$$
which is impossible. If $r=3$ then $n_0=1$, and we can suppose that $n_{00}=0$ and $n_{01}=1$. Since we must have $n_{00}+r_0+1=3$, it follows that $r_0=2$. Without loss of generality, we can assume that $I_0=I'_1\cup I'_{2}$. But we would have
$$
1=\sum_{i\in I_0}\mu_i = \sum_{j=1}^{2}\sum_{i\in I'_j}\mu_i >2
$$
which is again impossible. Therefore, we must have $r\in \{1,2\}$.
If  $r=2$, then $n_0=2$, and the conditions $n_{00}+r_0+1=n_{01}+r_1+1=3$ implies that $r_0=r_1=1$.
If $r=1$, then $n_0=3$. We can assume that $r_0=1$ and $r_1=0$, which implies that $n_{00}=1$ and $n_{01}=2$.
The rest of the proof follows the same lines as Proposition~\ref{prop:int:Chern:cl:div:P1}.
\end{proof}

\subsection{Proof of Theorem~\ref{th:recursive:1}}\label{subsec:prf:recursive:form}
We first show
\begin{Lemma}\label{lm:ki:integer:I:zero}
Let $\nu:=(\nu_1,\dots,\nu_k)\in L_k$. If there exists $i\in \{1,\dots,k\}$ such that $\nu_i\in \Z$ then $\Jc_k(\nu)=0$.
\end{Lemma}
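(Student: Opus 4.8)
The plan is to show that the top self-intersection number defining $\Jc_k(\nu)$ vanishes because the Hermitian form that computes $c_1(\hat{\Lcal}_\nu)$ degenerates as soon as one of the $\nu_i$ is an integer. First I would record the arithmetic normalization: since $\nu\in L_k$ forces $\nu_i<1$, the hypothesis $\nu_i\in\Z$ gives $\nu_i\leq 0$, so $\ell_i=-e\nu_i\geq 0$ is a non-negative multiple of $e$. Hence at least one of the orders $\ell_j$ is divisible by $e$. Passing to the canonical cyclic cover $(\hX,\homg,\tau)$ of a generic element of $\Omega^e\Mod_{0,k}(\ul{\ell})$ and letting $\pp\colon H^1(\hX,\hZ;\C)\to H^1(\hX;\C)$ be the canonical projection, the formula recalled in \textsection\ref{sec:vol:form:def} gives
\[
\rho:=\dim(\ker\pp\cap V_\zeta)=\card\bigl(\{j : e\mid \ell_j\}\bigr)\geq 1 .
\]

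Next I would use the metric description of the tautological bundle. On the open stratum $\Pb\Omega^e\Mod_{0,k}(\ul{\ell})\simeq\Pb V^+_\zeta$, the bundle $\hat{\Lcal}_\nu=\OO(-1)$ carries the Hermitian metric whose fiber norm at $[v]$ is $(v,v)$, the intersection pairing on $V_\zeta$, and its Chern form $\omega$ (locally $\pm\frac{\imath}{2\pi}\partial\bar\partial\log(v,v)$) represents $c_1(\hat{\Lcal}_\nu)$. Because $(\,.\,,.\,)|_{V_\zeta}$ is the $\pp$-pullback of the nondegenerate form on $\pp(V_\zeta)$, the quantity $(v,v)$ depends only on $\pp(v)$; hence $\omega$ is the pullback of the corresponding Fubini--Study-type form under the linear projection $\Pb V_\zeta\dashrightarrow \Pb\pp(V_\zeta)$. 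Since $\dim\Pb\pp(V_\zeta)=(k-2-\rho)-1=k-3-\rho$, the form $\omega$ has rank at most $k-3-\rho$ everywhere it is defined, and therefore $\omega^{k-3}\equiv 0$ on the open stratum (recall $\dim\widehat{\Mod}_{0,k}(\ul{\ell})=k-3$).

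It then remains to conclude that $\Jc_k(\nu)=c_1^{k-3}(\hat{\Lcal}_\nu)\cap[\widehat{\Mod}_{0,k}(\ul{\ell})]=\int\omega^{k-3}=0$, invoking the integrability and representability results underlying Theorem~\ref{th:vol:n:inters:nbr} (from \cite{Ng19,CMZ19}) to identify the algebraic intersection number with the integral of $\omega^{k-3}$ over the compactification. The main obstacle is precisely this last identification: one must check that passing from the dense open stratum to the blow-up $\widehat{\Mod}_{0,k}(\ul{\ell})$ contributes nothing along the boundary and exceptional divisors, so that the pointwise vanishing of $\omega^{k-3}$ genuinely forces the self-intersection to vanish (this is the step where the nondegeneracy hypothesis $e\nmid\ell_i$ was used in Theorem~\ref{th:vol:n:inters:nbr}, and where care is now required). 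I note that the subcase $\nu_i=0$ avoids the analytic input altogether: then $x_i$ is a regular marked point, $q$ is independent of $x_i$, so the forgetful morphism $\widehat{\Mod}_{0,k}(\ul{\ell})\to\widehat{\Mod}_{0,k-1}(\ul{\ell}')$ pulls back $\hat{\Lcal}_\nu$, and $c_1^{k-3}$ of a line bundle pulled back from a $(k-4)$-dimensional base is zero. The genuinely new situation is $\ell_i>0$ with $e\mid\ell_i$, for which the degeneracy of $(\,.\,,.\,)$ described above is the essential mechanism.
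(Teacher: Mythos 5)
Your proposal is correct and follows essentially the same route as the paper's proof: the paper likewise equips $\OO(-1)$ with the area (Hodge) metric, invokes \cite{CMZ19} to identify $c_1^{k-3}(\hat{\Lcal}_\nu)\cap[\widehat{\Mod}_{0,k}(\ul{\ell})]$ with $\int_{\Pb\Omega^e\Mod_{0,k}(\ul{\ell})}\left(\frac{\imath}{2\pi}\Theta\right)^{k-3}$, and concludes because $\Theta^{k-3}$ vanishes identically on the open stratum when some $\ell_i$ is divisible by $e$. Your rank argument via the projection $\pp$ is a sharper formulation of the paper's remark that some local coordinates do not appear in the Hodge norm, and the ``main obstacle'' you flag is handled in the paper exactly as you propose, namely by citing the currents-level representability of \cite{CMZ19}, which is asserted there without the nondegeneracy hypothesis of Theorem~\ref{th:vol:n:inters:nbr}.
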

\begin{proof}
The tautological line bundle $\OO(-1)_{\Pb\Hc^{(e)}_{0,k}}$ has a natural Hermitian norm given by the area of the associated flat surfaces. We will call this norm the Hodge norm of $\OO(-1)_{\Pb\Hc^{(e)}_{0,k}}$, and denote by $\Theta$ its the curvature form.
The results of \cite{CMZ19} implies that for any $m \in \Z_{\geq 0}$, the restriction of $(\frac{\imath}{2\pi}\Theta)^{m}$ to a certain desingularization of $\widehat{\Mod}_{0,k}(\ul{\ell})$ is a representative in the sense of currents of $c^{m}_1(\hat{\Lcal}_\nu)$. In particular, we have
\begin{equation}\label{eq:inters:numb:n:int:curv:f}
c_1^{k-3}(\hat{\Lc}_\nu)\cap[\widehat{\Mod}_{0,n}(\ul{\ell})]=\int_{\Pb\Omega^e\Mod_{0,k}(\ul{\ell})} \left(\frac{\imath}{2\pi} \Theta\right)^{k-3}.
\end{equation}
If some of the entries of $\nu$ are integers, then the corresponding entries of the vector $\ul{\ell}$ are divisible by $e$.
In this case   $\Theta^{k-3}$ vanishes identically because some of the local coordinates are not involved the local expression of the Hodge norm. This means that the right hand side of \eqref{eq:inters:numb:n:int:curv:f} is zero, and the lemma follows.
\end{proof}

\begin{proof}[Proof of Theorem~\ref{th:recursive:1}]
The first assertion of Theorem~\ref{th:recursive:1} is obviously a consequence of Lemma~\ref{lm:ki:integer:I:zero}. To prove \eqref{eq:recursive:1} we will consider the restriction of the tautological line bundle on boundary divisors of $\blowupsp$.
By Theorem~\ref{th:tauto:div}, we know that $\hat{\Lc}_\mu \sim \hat{p}^*\Dc_\mu+\Ec$. Thus
$$
c_1^{N}(\hat{\Lc}_\mu)\cap[\blowupsp] = c_1^{N-1}(\hat{\Lc}_\mu)\cap[\hat{p}^*\Dc_\mu+\Ec].
$$
Using \eqref{eq:except:Weil:div}, we get an  explicit expression of $\hat{p}^*\Dc_\mu+\Ec$ as a combination of the boundary divisors $\{\hat{D}_{\Sc}, \; \Sc \in \hat{\Pc}(\mu)\}$. If we write $\hat{p}^*\Dc_\mu+\Ec = \sum_{\Sc\in \hat{\Pc}(\mu)} \lambda_\Sc\cdot \hat{D}_\Sc$, then
$$
c_1^{N-1}(\hat{\Lc}_\mu)\cap[\hat{p}^*\Dc_\mu+\Ec]=\sum_{\Sc\in \Pc(\mu)}\lambda_\Sc\cdot c_1^{N-1}(\hat{\Lc}_\mu)\cap[\hat{D}_\Sc].
$$
For $\Sc \in \hat{\Pc}_1(\mu)$, Proposition~\ref{prop:int:Chern:cl:div:P1} implies that $c_1^{N-1}(\hat{\Lc}_\mu)\cap \hat{D}_\Sc=0$ unless $\Sc$ belongs to $\Tc_{1a}$ or $\Tc_{2a}$.
We will calculate $\lambda_\Sc$ for all $\Sc\in \Tc_{1a}\cup\Tc_{2a}$.
\begin{itemize}
\item[$\bullet$] Case $\Sc=\{I_0,I_1\} \in \Tc_{1a}$. Recall that $|I_0|=2$, and $m(\Sc)=d\mu_\Sc$. Since $\hat{D}_\Sc$ is not contained in the support of $\Ec$, the coefficient of $\hat{D}_\Sc$ is given by  the coefficient of $D_\Sc$ in $\Dc_\mu$, that is
\[
\lambda_\Sc  =  \frac{d}{(n-2)(n-1)}(n-3-(n-1)\mu_\Sc) =  \frac{d(n-3)}{(n-2)(n-1)}-\frac{d\mu_\Sc}{n-2}  =  a_\Sc.
\]
\item[$\bullet$] Case $\Sc=\{I_0,I_1,I_2\} \in \Tc_{2a}$. Define $\Sc_1=\{I_0\cup I_2, I_1\}$ and $\Sc_2=\{I_0\cup I_1, I_2\}$. Then the closure $D_\Sc$ of the stratum $D^*_\Sc$ associated to $\Sc$  is the intersection of $D_{\Sc_1}$ and $D_{\Sc_2}$ in $\ol{\Mod}_{0,n}$. Therefore, $\hat{D}_\Sc$ is contained in the inverse images of both $D_{\Sc_1}$ and $D_{\Sc_2}$.
The coefficient of $D_{\Sc_1}$ in $\Dc_\mu$ is
\[
b_{\Sc_1}  = \frac{d}{(n-2)(n-1)}\cdot n_2 \cdot(n_1-1-(n-1)\mu_{\Sc_1}) =  \frac{dn_2(n_1-1)}{(n-1)(n-2)}-\frac{n_2m_1}{n-2}.
\]
Similarly, the coefficient of $D_{\Sc_2}$ in $\Dc_\mu$ is
$$
b_{\Sc_2}=\frac{dn_1(n_2-1)}{(n-1)(n-2)}-\frac{n_1m_2}{n-2}.
$$
Let $\xx$ be a point  in $D^*_\Sc$, and $t_1$ and $t_2$  the coordinate functions that define respectively $D_{\Sc_1}$ and $D_{\Sc_2}$ in the neighborhood  $\Uc_\xx$ of $\xx$.
By \cite[Lem. 6.1]{Ng23:a} we know that the order of $t_1$ along $\hat{D}_\Sc$ is equal to $m_2$. Thus the multiplicity of $\hat{D}_\Sc$ in $\hat{p}^*D_{\Sc_1}$ is $m_2$. Similarly, the multiplicity of $\hat{D}_\Sc$ in $\hat{p}^*D_{\Sc_2}$ is $m_1$.
It follows that the coefficient of $\hat{D}_\Sc$ in $\hat{p}^*\Dc_\mu$ is equal to
$$
m_2b_{\Sc_1}+m_1b_{\Sc_2}=\frac{d(m_2n_2(n_1-1)+m_1n_1(n_2-1))}{(n-1)(n-2)}-\frac{m_1m_2(n-1)}{n-2}
$$
(here we used $n_1+n_2=n-1$).
By \eqref{eq:except:Weil:div}, the coefficient of $\hat{D}_\Sc$ in $\Ec$  is $m_1m_2$. Therefore,
\[
\lambda_\Sc  =  \frac{d(m_2n_2(n_1-1)+m_1n_1(n_2-1))}{(n-1)(n-2)}-\frac{m_1m_2(n-1)}{n-2}+m_1m_2  =  a_\Sc.
\]
\end{itemize}
In conclusion, we have $\lambda_\Sc=a_\Sc$ for all $\Sc\in \Tc_{1a}\cup\Tc_{2a}$.

\medskip

Let us now consider a divisor $\hat{D}_{\Sc_0}$ associated to a partition $\Sc_0=\{I_0,I_1\} \in \hat{\Pc}_0(\mu)$. Recall that we have $\mu(I_0)=\mu(I_1)=1$. This implies that the coefficient of $D_{\Sc_0}$ in $\Dc_\mu$ is $\frac{d(|I_0|-1)(|I_1|-1)}{(n-1)(n-2)}$.
Since $\hat{D}_{\Sc_0}$ is not contained in the support of $\Ec$, we have
$$
\lambda_{\Sc_0}=\frac{d(|I_0|-1)(|I_1|-1)}{(n-1)(n-2)}.
$$
From Theorem~\ref{th:tauto:l:bdl:on:div:P0}, we have
$$
c_1(\hat{\Lc}_\mu)\cap[\hat{D}_{\Sc_0}] \sim \sum_{\Sc_1\in \hat{\Pc}^*(\mu,\Sc_0)} -m(\Sc_1)[\hat{D}_{\Sc_1\cdot\Sc_0}].
$$
Thus
\begin{equation}\label{eq:int:tauto:div:over:P0}
c_1^{N-1}(\hat{\Lc}_\mu)\cap\left(\sum_{\Sc_0\in \hat{\Pc}_0(\mu)}\lambda_{\Sc_0}\cdot[\hat{D}_{\Sc_0}]\right) = c_1^{N-2}(\hat{\Lc}_\mu)\cap\left(\sum_{\Sc_0\in \hat{\Pc}_0(\mu)}\sum_{\Sc_1\in\hat{\Pc}^*(\mu,\Sc_0)}-\lambda_{\Sc_0}\cdot m(\Sc_1)\cdot[\hat{D}_{\Sc_1\cdot\Sc_0}]\right).
\end{equation}
By Proposition~\ref{prop:int:Chern:cl:on:div:P0}, we know that $c_1^{N-2}(\OO(-1))\cap [\hat{D}_{\Sc_1\cdot\Sc_0}]=0$ unless the partition  $\Sc:=\Sc'*\Sc$ associated to $\hat{D}_{\Sc_1\cdot \Sc_0}$ belongs to either $\Tc_{1b}$ or $\Tc_{2b}$.

\begin{itemize}
\item[$\bullet$] If $\Sc=\{I_{00},I_{01},I_1\} \in \Tc_{1b}$, then there is a unique choice for $(\Sc_0,\Sc_1)$, namely $\Sc_0=\{I_{00}, I_{01}\cup I_1\}$, and $\Sc_1=\{I_{00}\cup I_{01}, I_1\}$.
Recall that we have $|I_{00}|=2$ and $|I_{01}|=1$. Therefore
$$
\lambda_{\Sc_0}m(\Sc_1)=\frac{d(n-3)}{(n-1)(n-2)}\cdot m(\Sc)=a_\Sc.
$$

\item[$\bullet$] For $\Sc=\{I_{01},I_{02},I_1,I_2\} \in \Tc_{2b}$, we have two cases: if $\mu(I_{01})\neq \mu(I_{02})$, then $\Sc_0$ and $\Sc_1$ are uniquely determined by $\Sc$, namely $\Sc_0=\{I_{01}\cup I_1, I_{02}\cup I_2\}$ and $\Sc_1=\{I_{01}\cup I_{02},I_1,I_2\}$.  If $\mu(I_{01})=\mu(I_{02})$, then the partition $\Sc$ arises from two pairs $(\Sc_0,\Sc_1)$ and $(\Sc'_0,\Sc_1)$, where $\Sc_0$ and $\Sc_1$ are as above, and $\Sc'_0=\{I_{01}\cup I_2, I_{02}\cup I_1\}$.
In either case, we have $\lambda_{\Sc_0} m(\Sc_1)=\lambda_{\Sc'_0} m(\Sc_1)=\frac{dn_1n_2}{(n-1)(n-2)}\cdot m(\Sc)$  and $ c_1^{N-2}(\hat{\Lc}_\mu)\cap [\hat{D}_{\Sc_1\cdot\Sc_0}]=c_1^{N-2}(\hat{\Lc}_\mu)\cap [\hat{D}_{\Sc'_1\cdot\Sc_0}]$.
\end{itemize}
Therefore
\begin{align*}
\sum_{\Sc_0\in \hat{\Pc}_0(\mu)} \lambda_{\Sc_0}\cdot c_1^{N-1}(\hat{\Lc}_\mu)\cap[\hat{D}_{\Sc_0}] & = -\sum_{\Sc\in\Tc_{1b}\cup\Tc_{2b}}a_\Sc\cdot c_1^{N-2}(\hat{\Lc}_\mu)\cap[\hat{D}_{\Sc_1\cdot\Sc_0}],
\end{align*}
where $\Sc_0$ and $\Sc_1$ are constructed from by $\Sc$ as above.
We can now apply Proposition~\ref{prop:int:Chern:cl:div:P1} and Proposition~\ref{prop:int:Chern:cl:on:div:P0} to conclude.
\end{proof}


\section{Volumes of strata of quadratic differentials}\label{sec:vol:strat:QD}
\subsection{Self-intersection numbers of the tautological divisors on strata of quadratic differentials}\label{subsec:self:inters:nb:QD:g0}
For all $\kappa=(k_1,\dots,k_n) \in (\Z_{\geq -1})^n$ such that $k_1+\dots+k_n=-4$, let us define
\begin{equation}\label{eq:def:vol:quad:str}
V(k_1,\dots,k_n):=\hat{\Dc}^{n-3}_\mu, \qquad \text{ where } \qquad \mu=(-\frac{k_1}{2},\dots,-\frac{k_n}{2}).
\end{equation}
As an application of Theorem~\ref{th:recursive:1}, we will prove
\begin{Theorem}\label{th:formula:d:2}
For all $(k_1,\dots,k_n)\in (\Z_{\geq -1})^n$ such that $k_i$ is odd for all $i=1,\dots,n$, and $k_1+\dots+k_n=-4$, we have
\begin{equation}\label{eq:vol:quad:form}
V(k_1,\dots,k_n)=(-1)^{\frac{n}{2}}\cdot(n-3)!\cdot\prod_{i=1}^n\frac{k_i!!}{(k_i+1)!!}
\end{equation}
where $k!!=k\cdot(k-2)\cdots3\cdot1$ for $k$  odd, and $k!!=k\cdot(k-2)\cdots2$ for $k$ even, with the convention $0!!=(-1)!!=1$.
\end{Theorem}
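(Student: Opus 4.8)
The plan is to prove \eqref{eq:vol:quad:form} by induction on $n$, using the recursive formula of Theorem~\ref{th:recursive:1} specialized to $d=2$. First observe that, with $\mu=(-k_1/2,\dots,-k_n/2)$, the smallest integer $e$ with $e\mu_i\in\Z$ is $e=2=d$ (each $\mu_i$ is a genuine half-integer since $k_i$ is odd), so $\ul{\ell}=\kappa$ and $\widehat{\Mod}_{0,n}(\ul{\ell})=\blowupsp$. Hence $V(k_1,\dots,k_n)=\hat{\Dc}_\mu^{\,n-3}=c_1^{n-3}(\hat{\Lc}_\mu)\cap[\blowupsp]=\Jc_n(\mu)$ by Theorem~\ref{th:tauto:div}(b). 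Because the sum of $n$ odd numbers equal to $-4$ forces $n$ to be even, and because no $\mu_i$ is an integer, Lemma~\ref{lm:ki:integer:I:zero} is not triggered and the right-hand side of \eqref{eq:recursive:1} applies. I will induct over even $n$, taking $n=4$ as the base case (there $N=n-3=1$, and the recursion, whose derivation manipulates $c_1^{N-2}$ via Proposition~\ref{prop:int:Chern:cl:on:div:P0}, requires $N\ge 2$, i.e. $n\ge 6$): for $n=4$ formula \eqref{eq:vol:quad:form} reads $V(k_1,k_2,k_3,k_4)=\prod_{i=1}^4\frac{k_i!!}{(k_i+1)!!}$ and is verified by the explicit computation of Appendix~\ref{sec:low:dim:calcul}.

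The first simplification is to decide which primary partitions survive. For $\Sc=\{I_0,I_1\}\in\Tc_{1a}(\mu)$ one would need $|I_0|=2$ and $\mu(I_0)\notin\Z$; but the sum of two odd $k_i$ is even, so $\mu(I_0)=-\tfrac12\sum_{i\in I_0}k_i\in\Z$ and $\Tc_{1a}(\mu)=\varnothing$. For $\Sc\in\Tc_{2a}(\mu)$ one needs $|I_1|,|I_2|$ both odd (so that $\mu(I_j)\notin\Z$) with $|I_1|+|I_2|=n-1$ odd, which is impossible, so $\Tc_{2a}(\mu)=\varnothing$. Thus only $\Tc_{1b}(\mu)$ and $\Tc_{2b}(\mu)$ contribute. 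In $\Tc_{1b}$ the block $I_{00}$ with $|I_{00}|=2,\ \mu(I_{00})=1$ must consist of two simple poles ($k=-1$), while $I_{01}=\{l\}$ has $k_l\ge 1$; in $\Tc_{2b}$ the singletons $I_{01}=\{a\},I_{02}=\{b\}$ have $k_a,k_b\ge 1$. In all cases $e(\Sc)=e_j(\Sc)=2=d$, so every factor $(d/e)^{\bullet}$ and $d^{\bullet}/(e_1^{\bullet}e_2^{\bullet})$ in \eqref{eq:recursive:1} collapses to $1$, while $m(\Sc)=d(\mu(I_1)-1)=k_l$ for $\Tc_{1b}$ and $m(\Sc)=m_1m_2=k_ak_b$ for $\Tc_{2b}$. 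The reduced vectors $\nu(\Sc),\nu_1(\Sc),\nu_2(\Sc)$ again have all-odd associated $k$-entries (the new first entry being $k_l-2$, resp. $k_a-2$, $k_b-2$), so the induction stays within the all-odd class and every smaller $\Jc$ appearing is a $V$-value of even length $\ge 4$.

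The arithmetic input that closes the induction is the elementary identity
\[
\frac{(k-2)!!/(k-1)!!}{k!!/(k+1)!!}=\frac{k+1}{k}\qquad (k \text{ odd}),
\]
together with the fact that a simple pole contributes the factor $(-1)!!/0!!=1$. Writing $P(\kappa)=\prod_{i=1}^n\frac{k_i!!}{(k_i+1)!!}$ and substituting the inductive formula for each smaller $V$-value, every product of double factorials telescopes: the $I_{00}$-poles drop out, and each distinguished index produces a factor $m(\Sc)\cdot\frac{k+1}{k}=k\cdot\frac{k+1}{k}=k+1$. Tracking signs via $(-1)^{(n-2)/2}=-(-1)^{n/2}$ and $(-1)^{(n_1+1)/2+(n_2+1)/2}=(-1)^{n/2}$, formula \eqref{eq:recursive:1} becomes, after dividing by $(-1)^{n/2}P(\kappa)$,
\[
(n-3)!=\frac{2(n-3)(n-4)\,(n-5)!}{(n-1)(n-2)}\binom{p}{2}+\frac{2}{(n-1)(n-2)}\sum_{\Sc\in\Tc_{2b}(\mu)}\eps(\Sc)\,n_1n_2\,(n_1-2)!\,(n_2-2)!\,(k_a+1)(k_b+1),
\]
where $p$ is the number of simple poles and I used $\sum_{l\,:\,k_l\ge1}(k_l+1)=\sum_i(k_i+1)=n-4$ for the $\Tc_{1b}$ sum.

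The heart of the argument is therefore the purely combinatorial identity displayed above: its right-hand side — a sum over splittings of $\{1,\dots,n\}$ into two even blocks each of total order $-2$, with one distinguished index of positive order in each block — must be independent of $\kappa$ and equal to $(n-3)!$ whenever $\sum_i(k_i+1)=n-4$. I expect this to be the main obstacle. Two features make it delicate: the factor $\eps(\Sc)\in\{1,2\}$, which corrects the overcounting of an $\Sc\in\Tc_{2b}$ arising from two distinct pairs $(\Sc_0,\Sc_1)$ when $\mu(I_{01})=\mu(I_{02})$; and the apparent dependence on the pole number $p$ in the $\Tc_{1b}$ term, which must cancel against the block-size weights $n_1n_2(n_1-2)!(n_2-2)!$ coming from $\Tc_{2b}$. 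The plan is to re-express both sums symmetrically in the variables $k_i+1$ (equivalently, via $\frac{k!!}{(k+1)!!}=4^{-(k+1)/2}\binom{k+1}{(k+1)/2}$, in terms of central binomial coefficients) and to reduce the required cancellation to the symmetric-polynomial identity of Appendix~\ref{sec:sym:poly:append}; feeding that identity back in yields $(n-3)!$ and completes the induction.
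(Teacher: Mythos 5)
Your proposal reproduces the paper's own proof essentially step for step: the specialization of Theorem~\ref{th:recursive:1} to $d=2$ with $\Tc_{1a}(\mu)=\Tc_{2a}(\mu)=\vide$ is the paper's Corollary~\ref{cor:rec:form:quad:diff}, your telescoping of double factorials and sign bookkeeping (including $\sum_{k_l>0}(k_l+1)=n-4$) are exactly Lemmas~\ref{lm:contrib:T1b} and~\ref{lm:contrib:T2b}, the base case $n=4$ from Appendix~\ref{sec:low:dim:calcul} is the same, and your displayed combinatorial identity is precisely the paper's \eqref{eq:identity:(n-1)!}, namely $\sum_{I\in \Pc_0(\kappa)}(|I|-1)!(|I^c|-1)!=(n-1)!$, in partially reduced form. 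The step you defer as "the main obstacle" is handled in the paper by exactly your proposed tool — the regrouping of Lemma~\ref{lm:identity:(n-1)!:a} together with Theorem~\ref{th:sym:poly} — the only ingredient your sketch leaves implicit being that Theorem~\ref{th:sym:poly} yields only independence of $\sum_i k_i$, so the value $(n-1)!$ must still be anchored by an induction on the number $p$ of positive entries of $\kappa$ (with the direct computation at $p=1$ as base case), as the paper does.
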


Theorem~\ref{th:vol:quad:diff:stra} is an immediate consequence of Theorem~\ref{th:formula:d:2}.
\begin{proof}[Proof of Theorem~\ref{th:vol:quad:diff:stra}]
By Theorem~\ref{th:vol:n:inters:nbr}, we have
$$
\vol_1(\Pb\Omega^2\Mod_{0,n}(\kappa)) = \frac{(-1)^{n-3}}{2^{n-3}}\cdot\frac{\pi^{n-2}}{(n-2)!}\cdot V(\kappa) = (-1)^{\frac{n-2}{2}}\cdot\frac{\pi^{n-2}}{2^{n-3}(n-3)}\cdot \prod_{i=1}^n\frac{k_i!!}{(k_i+1)!!} \quad \hbox{ by \eqref{eq:vol:quad:form}}.
$$
\end{proof}

In view of Theorem~\ref{th:recursive:1}, let us define $\Tc_{1b}(\kappa)$ to be the set of partitions $\Sc=\{I_{00},I_{01},I_1\}$ of $\{1,\dots,n\}$ such that
\begin{itemize}
\item[.] $I_{00}=\{i_0,i'_0\}$ such that $k_{i_0}=k_{i'_0}=-1$,

\item[.] $I_{01}=\{i_1\}$ such that $k_{i_1}>0$.
\end{itemize}
We define $m(\Sc):=k_{i_1}$, and $\kappa(\Sc)$ to be the vector in $\Z^{n-2}$ constructed as follows: the  first coordinate of $\kappa(\Sc)$ is $k_{i_1}-2$,  and the remaining coordinates of $\kappa(\Sc)$ are given by the entries of $\{k_i, i\in I_1\}$ (the order of $\{k_i,\; i \in I_1\}$ is unimportant). Observe that  all the entries of $\kappa(\Sc)$ are odd numbers.

Next, we define $\Tc_{2b}(\kappa)$ to be the  set of partitions $\Sc=\{I_{01},I_{02},I_{1},I_2\}$ such that
\begin{itemize}
\item[.] $I_{01}=\{i_1\}, I_{02}=\{i_2\}$ such that $k_{i_1}>0$ and $k_{i_2}>0$,

\item[.] $\sum_{i\in I_{1}}k_i+k_{i_1}=\sum_{i\in I_{2}}k_i+k_{i_2}=-2$.
\end{itemize}
We associate to each partition $\Sc\in \Tc_{2b}(\kappa)$ as above the following parameters
\begin{itemize}
\item[.] $m_1(\Sc):=k_{i_1}, m_2(\Sc):=k_{i_2}$,

\item[.] $n_1(\Sc)=|I_1|, \; n_2(\Sc)=|I_2|$,

\item[.] for $s\in\{1,2\}$, $\kappa_s(\Sc)$ is the vector in $\Z^{n_s+1}$ whose first coordinate is $k_{i_s}-2$, and the remaining $n_s$ coordinates are the entries of $I_s$.

\item[.] $\eps(\Sc):=\left\{
\begin{array}{ll}
1 & \hbox{ if $k_{i_1}\neq k_{i_2}$},\\
2 & \hbox{ if $k_{i_1}=k_{i_2}$}.
\end{array}
\right.
$
\end{itemize}
Let $\mu_i:=-\frac{k_i}{2}, \; i=1,\dots,n$, and $\mu:=(\mu_1,\dots,\mu_n)$. Define
\begin{equation}\label{eq:def:vol:quad:diff}
V(\kappa):=\Jc_n(\mu)=c_1^{n-3}(\OO(-1)_{\Pb\ol{\Hc}^{(2)}_{0,n}})\cap[\Pb\Omega^2\ol{\Mod}_{0,n}(\kappa)].
\end{equation}
Since $n$ must be even, one can easily see that $\Tc_{1a}(\mu)=\Tc_{2a}(\mu)=\vide$.
Moreover, by definition, we have that $\Tc_{1b}(\mu)=\Tc_{1b}(\kappa)$ and $\Tc_{2b}(\mu)=\Tc_{2b}(\kappa)$. Therefore, \eqref{eq:recursive:1} implies

\begin{Corollary}\label{cor:rec:form:quad:diff}
The numbers $V(\kappa)$ satisfy
\begin{eqnarray}
\label{eq:rec:formula:quad:form} V(\kappa) & = & -\sum_{\Sc \in \Tc_{1b}(\kappa)}\frac{2(n-3)}{(n-1)(n-2)}\cdot m(\Sc)\cdot V(\kappa(\Sc))+ \\
\nonumber & & +\sum_{\Sc\in \Tc_{2b}(\kappa)}\eps(\Sc)\frac{2n_1(\Sc)n_2(\Sc)}{(n-1)(n-2)}\cdot m_1(\Sc)\cdot m_2(\Sc)\cdot V(\kappa_1(\Sc))\cdot V(\kappa_2(\Sc)).
\end{eqnarray}
\end{Corollary}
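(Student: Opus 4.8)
The plan is to obtain \eqref{eq:rec:formula:quad:form} as a direct specialization of the general recursive formula \eqref{eq:recursive:1} to $d=2$ and to weights $\mu=(\mu_1,\dots,\mu_n)$ with $\mu_i=-k_i/2$ and all $k_i$ odd. The first thing to record is that $n$ is forced to be even, since $k_1+\dots+k_n=-4$ is even while a sum of $n$ odd integers has the parity of $n$. Granting this, I would first show that the two families $\Tc_{1a}(\mu)$ and $\Tc_{2a}(\mu)$ appearing in \eqref{eq:recursive:1} are empty, so that only the $\Tc_{1b}$ and $\Tc_{2b}$ contributions survive, and then match the surviving data with the combinatorial objects attached to $\kappa$.

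The vanishing of $\Tc_{1a}(\mu)$ and $\Tc_{2a}(\mu)$ rests on the parity principle that $\mu(I)=-\frac{1}{2}\sum_{i\in I}k_i\notin\Z$ if and only if $|I|$ is odd, which holds because each $k_i$ is odd. A partition $\{I_0,I_1\}\in\Tc_{1a}(\mu)$ requires $|I_0|=2$ and $\mu(I_0)\notin\Z$; but $|I_0|=2$ forces $\mu(I_0)\in\Z$, so $\Tc_{1a}(\mu)=\vide$. A partition $\{I_0,I_1,I_2\}\in\Tc_{2a}(\mu)$ requires $|I_0|=1$ and $\mu(I_1),\mu(I_2)\notin\Z$, hence $|I_1|$ and $|I_2|$ are both odd; then $n=|I_0|+|I_1|+|I_2|$ is a sum of three odd numbers and is odd, contradicting $n$ even, so $\Tc_{2a}(\mu)=\vide$.

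Next I would identify the remaining index sets together with all their parameters. For $\Sc=\{I_{00},I_{01},I_1\}\in\Tc_{1b}(\mu)$, the conditions $|I_{00}|=2$ and $\mu(I_{00})=1$ force $k_{i_0}+k_{i_0'}=-2$ with both entries odd and $\geq-1$, hence $k_{i_0}=k_{i_0'}=-1$, while $\mu(I_{01})<0$ reads $k_{i_1}>0$; this is precisely the defining data of $\Tc_{1b}(\kappa)$, and one checks $m(\Sc)=2(\mu(I_1)-1)=k_{i_1}$ and that $\nu(\Sc)$, rescaled by $e(\Sc)=2$, becomes $\kappa(\Sc)$. Likewise the defining equalities for $\Sc\in\Tc_{2b}(\mu)$ become $\sum_{i\in I_s}k_i+k_{i_s}=-2$, recovering $\Tc_{2b}(\kappa)$ with $m_s(\Sc)=k_{i_s}$, $n_s(\Sc)=|I_s|$, $e_s(\Sc)=2$, and $\nu_s(\Sc)$ rescaling to $\kappa_s(\Sc)$. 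Under these identifications, and using the definition \eqref{eq:def:vol:quad:diff}, one has $\Jc_{n-2}(\nu(\Sc))=V(\kappa(\Sc))$ and $\Jc_{n_s+1}(\nu_s(\Sc))=V(\kappa_s(\Sc))$; here it is worth verifying that each $\kappa(\Sc)$ and $\kappa_s(\Sc)$ again has all odd entries summing to $-4$, so that the formula applies recursively.

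The final and most delicate bookkeeping step is to check that the powers of $d/e$ in \eqref{eq:recursive:1} collapse to $1$. Since every relevant $e(\Sc)$ and $e_s(\Sc)$ equals $2=d$, the factor $(d/e(\Sc))^{n-5}$ in the $\Tc_{1b}$ sum equals $1$, and the factor $d^{n-6}/(e_1^{n_1-2}e_2^{n_2-2})$ in the $\Tc_{2b}$ sum simplifies, via $n_1+n_2=n-2$, to $2^{\,n-6-(n_1-2)-(n_2-2)}=2^{0}=1$. Substituting $d=2$ into the coefficients $a_\Sc$ of Theorem~\ref{th:recursive:1} then produces exactly $\frac{2(n-3)}{(n-1)(n-2)}\,m(\Sc)$ for $\Tc_{1b}$ and $\eps(\Sc)\frac{2n_1n_2}{(n-1)(n-2)}\,m_1m_2$ for $\Tc_{2b}$, which are the coefficients in \eqref{eq:rec:formula:quad:form}. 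The main obstacle is not conceptual but organizational: one must keep the parity computation and the uniform rescaling $e=2$ consistent across all four families at once so that the $d$-power prefactors vanish cleanly; once this is arranged, the corollary is a transcription of \eqref{eq:recursive:1}.
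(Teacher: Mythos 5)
Your proposal is correct and follows exactly the paper's route: the paper also derives the corollary by specializing \eqref{eq:recursive:1} to $d=2$, observing that $n$ must be even so that $\Tc_{1a}(\mu)=\Tc_{2a}(\mu)=\vide$, and identifying $\Tc_{1b}(\mu)=\Tc_{1b}(\kappa)$, $\Tc_{2b}(\mu)=\Tc_{2b}(\kappa)$ together with their parameters. Your write-up merely fills in the details the paper leaves as ``one can easily see'' — the parity arguments, the matching $m(\Sc)=k_{i_1}$, $e=e_s=2$, the rescaling $\nu\mapsto\kappa$, and the collapse of the $d/e$ prefactors to $1$ — all of which are verified correctly.
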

We now set
$$
V^*(\kappa):=(-1)^{\frac{n}{2}}\cdot(n-3)!\prod_{i=1}^n\frac{k_i!!}{(k_i+1)!!}
$$
We have $V^*(-1,-1,-1,-1)=1=V(-1,-1,-1,-1)$ (see Appendix \ref{sec:low:dim:calcul}). Thus to prove Theorem~\ref{th:formula:d:2}, it is enough to show that the numbers $V^*(\kappa)$ satisfy the same recursive relation as $V(\kappa)$.
To this purpose, let us set
$$
\Cc_1(\kappa):=-\sum_{\Sc \in \Tc_{1b}(\kappa)}\frac{2(n-3)}{(n-1)(n-2)}\cdot m(\Sc)\cdot V^*(\kappa(\Sc))
$$
and
$$
\Cc_2(\kappa):=\sum_{\Sc\in \Tc_{2b}(\kappa)}\eps(\Sc)\frac{2n_1(\Sc)n_2(\Sc)}{(n-1)(n-2)}\cdot m_1(\Sc)\cdot m_2(\Sc)\cdot V^*(\kappa_1(\Sc))\cdot V^*(\kappa_2(\Sc)).
$$
Denote by $\Pc_0(\kappa)$ the set of $I\subset \{1,\dots,n\}$ such that $\sum_{i\in I} k_i=-2$.
We first show
\begin{Lemma}
\label{lm:contrib:T1b}
We have
\begin{equation}\label{eq:contrib:T1b}
\Cc_1(\kappa)=\sum_{\{i,i'\}\in \Pc_0(\kappa)} \frac{2(n-3)!}{(n-1)!}V^*(\kappa)= \frac{V^*(\kappa)}{(n-1)!}\sum_{\substack{I\in \Pc_0(\kappa), \\ |I|=2 \text{ or } |I^c|=2}}(|I|-1)!(|I^c|-1)!
\end{equation}
\end{Lemma}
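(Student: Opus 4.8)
The plan is to reduce the identity to one explicit computation of the ratio $V^*(\kappa(\Sc))/V^*(\kappa)$ for each $\Sc\in\Tc_{1b}(\kappa)$, and then to perform the summation over $\Tc_{1b}(\kappa)$ using a single arithmetic identity coming from $\sum_i k_i=-4$. Throughout I work with $n\geq 6$ even (the case $n=4$ being the base case treated in Appendix~\ref{sec:low:dim:calcul}), so that no denominator below vanishes.

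First I would record the effect of passing from $\kappa$ to $\kappa(\Sc)$. For $\Sc=\{I_{00},I_{01},I_1\}$ with $I_{00}=\{i_0,i_0'\}$, $k_{i_0}=k_{i_0'}=-1$, and $I_{01}=\{i_1\}$, $k_{i_1}>0$, the vector $\kappa(\Sc)$ is obtained from $\kappa$ by deleting the two entries equal to $-1$ and replacing $k_{i_1}$ by $k_{i_1}-2$; it has length $n-2$, all entries odd and $\geq -1$, and sum $-4$. Using the definition of $V^*$ with the conventions $0!!=(-1)!!=1$, three factors appear: the sign ratio $(-1)^{(n-2)/2}/(-1)^{n/2}=-1$; the prefactor ratio $(n-5)!/(n-3)!=1/\big((n-3)(n-4)\big)$; and a double-factorial ratio in which the two deleted $-1$'s each contribute $(-1)!!/0!!=1$, while the modification of $k_{i_1}$ contributes
\[
\frac{(k_{i_1}-2)!!/(k_{i_1}-1)!!}{k_{i_1}!!/(k_{i_1}+1)!!}=\frac{k_{i_1}+1}{k_{i_1}},
\]
where I use $k_{i_1}!!=k_{i_1}(k_{i_1}-2)!!$ and $(k_{i_1}+1)!!=(k_{i_1}+1)(k_{i_1}-1)!!$ ($k_{i_1}$ odd). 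Hence $V^*(\kappa(\Sc))=-\frac{k_{i_1}+1}{k_{i_1}(n-3)(n-4)}\,V^*(\kappa)$.

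Next I would substitute this into the definition of $\Cc_1(\kappa)$. Since $m(\Sc)=k_{i_1}$, the factor $k_{i_1}$ cancels and each summand becomes $\frac{2(k_{i_1}+1)}{(n-1)(n-2)(n-4)}V^*(\kappa)$, so that
\[
\Cc_1(\kappa)=\frac{2\,V^*(\kappa)}{(n-1)(n-2)(n-4)}\sum_{\Sc\in\Tc_{1b}(\kappa)}(k_{i_1}+1).
\]
To evaluate the remaining sum I would enumerate $\Tc_{1b}(\kappa)$ by the independent choices of the unordered pair $I_{00}\subset P:=\{i:k_i=-1\}$ and of $i_1\in Q:=\{i:k_i>0\}$, so that the sum factors as $\binom{|P|}{2}\sum_{i\in Q}(k_i+1)$. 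The relation $\sum_i k_i=-4$ gives $\sum_{i\in Q}k_i=|P|-4$, whence $\sum_{i\in Q}(k_i+1)=(|P|-4)+|Q|=n-4$. The factor $(n-4)$ therefore cancels, yielding $\Cc_1(\kappa)=\binom{|P|}{2}\frac{2}{(n-1)(n-2)}V^*(\kappa)=\binom{|P|}{2}\frac{2(n-3)!}{(n-1)!}V^*(\kappa)$. Since the two-element members of $\Pc_0(\kappa)$ are precisely the pairs of indices with $k=-1$ (two odd numbers $\geq -1$ sum to $-2$ only if both equal $-1$), this is exactly the middle expression of \eqref{eq:contrib:T1b}.

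Finally, the last equality is pure bookkeeping: $\Pc_0(\kappa)$ is closed under complementation because $\sum_{i\in I^c}k_i=-4-(-2)=-2$. Among the subsets in the right-hand sum, those with $|I|=2$ are the $\binom{|P|}{2}$ pairs of $-1$'s, each contributing $(|I|-1)!(|I^c|-1)!=(n-3)!$, and those with $|I^c|=2$ are their complements, again $\binom{|P|}{2}$ of them with the same contribution; for $n\geq 6$ the two families are disjoint, so the sum is $2\binom{|P|}{2}(n-3)!$, and dividing by $(n-1)!$ reproduces the middle expression. The only delicate point of the whole argument is the double-factorial computation together with the observation that the spurious factor $(n-4)$ produced by the ratio of prefactors is cancelled exactly by the identity $\sum_{i\in Q}(k_i+1)=n-4$; everything else is formal.
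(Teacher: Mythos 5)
Your proof is correct and takes essentially the same route as the paper's: the same double-factorial manipulation (giving $m(\Sc)\cdot V^*(\kappa(\Sc))=-(k_{i_1}+1)\frac{(n-5)!}{(n-3)!}V^*(\kappa)$), the same key cancellation via $\sum_{k_i>0}(k_i+1)=n-4$, and the same count of pairs of $-1$-entries as the two-element members of $\Pc_0(\kappa)$. The only differences are organizational — you factor the sum over $\Tc_{1b}(\kappa)$ as a product of independent choices rather than grouping by $I_{00}$ first, and you make explicit both the final bookkeeping equality and the implicit restriction $n\geq 6$ (at $n=4$ the two displayed expressions would in fact differ, so this restriction is the correct reading) — none of which changes the argument.
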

\begin{proof}
Consider $\Sc=\{I_{00},I_{01},I_1\} \in \Tc_{1b}(\kappa)$. Let us assume that $I_{00}=\{1,2\}$, $I_{01}=\{3\}$. Recall that by definition, we have  $k_1=k_2=-1$, $m(\Sc)=k_3\geq 1$, and $\kappa(\Sc)=(k_3-2,k_4,\dots,k_n)$.
Therefore,
\begin{align*}
 m(\Sc)\cdot V^*(\kappa(\Sc))  &= k_3\cdot(-1)^{\frac{n-2}{2}}\cdot(n-5)!\cdot\frac{(k_3-2)!!}{(k_3-1)!!}\prod_{i=4}^n\frac{k_i!!}{(k_i+1)!!} \\
   &=(-1)^{\frac{n-2}{2}}\cdot (k_3+1)\cdot(n-5)!\prod_{i=3}^n\frac{k_i!!}{(k_i+1)!!}\\
   &=(-1)^{\frac{n-2}{2}}\cdot (k_3+1)\cdot(n-5)!\prod_{i=1}^n\frac{k_i!!}{(k_i+1)!!}\\
   &=-(k_3+1)\cdot \frac{(n-5)!}{(n-3)!}V^*(\kappa),
\end{align*}
where in the last two equalities, we make use of the convention
$$
\frac{k_1!!}{(k_1+1)!!}=\frac{k_2!!}{(k_2+1)!!}=\frac{(-1)!!}{0!!}=1.
$$
Thus, the contribution of $\Sc$ in $\Cc_1(\kappa)$ is equal to
\begin{align*}
\frac{2(n-3)}{(n-1)(n-2)}\cdot (k_3+1)\cdot\frac{(n-5)!}{(n-3)!}\cdot V^*(\kappa)& = (k_3+1)\cdot\frac{2(n-3)(n-5)!}{(n-1)!}\cdot V^*(\kappa).
\end{align*}
It follows that the contribution of the partitions $\Sc'=\{I'_{00},I'_{01},I'_1\}$ such that $I'_{00}=\{1,2\}$ in $\Cc_1(\kappa)$ is equal to
$$
\Cc_1(\kappa,\{1,2\}):=\left(\sum_{\substack{3\leq i \leq n\\ k_i >0}}(k_i+1)\right)\cdot \frac{2(n-3)(n-5)!}{(n-1)!}\cdot V^*(\kappa).
$$
Since for all $i\in \{1,\dots,n\}$, either $k_i=-1$ or $k_i>0$, we have
\begin{align*}
\sum_{\substack{3\leq i \leq n\\ k_i >0}}(k_i+1) & = \sum_{3\leq i \leq n}(k_i+1) = n-2+\sum_{3\leq i \leq n}k_i = n-4.
\end{align*}
Therefore
$$
\Cc_1(\kappa,\{1,2\})=\frac{2(n-3)!}{(n-1)!}\cdot V^*(\kappa).
$$
It follows that
$$
\Cc_1(\kappa)=\sum_{\substack{\{i,i'\}\subset \{1,\dots,n\}\\ k_i+k_{i'}=-2}}\Cc_1(\kappa,\{i,i'\}) = \sum_{\{i,i'\}\in \Pc_0(\kappa)}\frac{2(n-3)!}{(n-1)!}V^*(\kappa).
$$
\end{proof}

\begin{Lemma}\label{lm:contrib:T2b}
We have
\begin{equation}\label{eq:contrib:T2b}
\Cc_2(\kappa)=\frac{V^*(\kappa)}{(n-1)!}\cdot\sum_{\substack{I\in \Pc_0(\kappa),\\ 3 \leq |I| \leq n-3}}(|I|-1)!(|I^c|-1)!
\end{equation}
\end{Lemma}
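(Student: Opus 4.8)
The plan is to mirror the proof of Lemma~\ref{lm:contrib:T1b}, the genuinely new feature being the combinatorial bookkeeping forced by the weight $\eps(\Sc)$. Fix $\Sc=\{I_{01},I_{02},I_1,I_2\}\in\Tc_{2b}(\kappa)$ with $I_{01}=\{i_1\}$, $I_{02}=\{i_2\}$, and set $J_s:=I_s\cup\{i_s\}$ for $s=1,2$. By the defining condition $\sum_{i\in I_s}k_i+k_{i_s}=-2$ the sets $J_1,J_2$ lie in $\Pc_0(\kappa)$ and are complementary, with $|J_s|=n_s(\Sc)+1$; since the inequalities defining $\Tc_{2b}$ force $n_s\geq 2$, we have $3\leq|J_1|\leq n-3$.

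First I would simplify a single factor $m_s(\Sc)V^*(\kappa_s(\Sc))$. As $\kappa_s(\Sc)$ has $|J_s|$ entries, the first being $k_{i_s}-2$ and the rest $\{k_i:i\in I_s\}$, the definition of $V^*$ together with the telescoping identity $k\frac{(k-2)!!}{(k-1)!!}=(k+1)\frac{k!!}{(k+1)!!}$ already used in Lemma~\ref{lm:contrib:T1b} gives
\begin{equation*}
m_s(\Sc)V^*(\kappa_s(\Sc))=(-1)^{|J_s|/2}(n_s-2)!\,(k_{i_s}+1)\prod_{i\in J_s}\frac{k_i!!}{(k_i+1)!!}.
\end{equation*}
Multiplying the two factors, using $|J_1|+|J_2|=n$ for the sign and $\prod_{i\in J_1}\cdot\prod_{i\in J_2}=\prod_{i=1}^n$ for the double factorials, the two copies of $(-1)^{n/2}$ cancel and I obtain
\begin{equation*}
m_1(\Sc)m_2(\Sc)V^*(\kappa_1(\Sc))V^*(\kappa_2(\Sc))=\frac{(n_1-2)!(n_2-2)!(k_{i_1}+1)(k_{i_2}+1)}{(n-3)!}\,V^*(\kappa).
\end{equation*}
Substituting into the definition of $\Cc_2(\kappa)$ and using $(n-1)(n-2)(n-3)!=(n-1)!$ reduces the claim to evaluating $\sum_{\Sc}\eps(\Sc)\,g(\Sc)$, where $g(\Sc):=n_1n_2(n_1-2)!(n_2-2)!(k_{i_1}+1)(k_{i_2}+1)$.

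The core step is to reorganize this weighted sum. I would introduce the set $S$ of ordered tuples $(J_1,i_1,J_2,i_2)$ with $J_1\sqcup J_2=\{1,\dots,n\}$, $J_1,J_2\in\Pc_0(\kappa)$, $|J_s|\geq 3$, and $i_s\in J_s$, $k_{i_s}>0$, together with the forgetful map $S\to\Tc_{2b}(\kappa)$ sending a tuple to $\{\{i_1\},\{i_2\},J_1\setminus\{i_1\},J_2\setminus\{i_2\}\}$. Since $g$ is symmetric in the two blocks it descends to a function on $\Tc_{2b}(\kappa)$, and the crux is the identity $\eps(\Sc)\,g(\Sc)=\tfrac12\sum_{\text{fibre over }\Sc}g$. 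This is verified by inspecting the fibres: when $k_{i_1}\neq k_{i_2}$ the pairing of each singleton with its partner block is rigid, so the fibre consists of the two orderings of the blocks and $\eps=1$; when $k_{i_1}=k_{i_2}$ the two large blocks have equal $k$-sum $-2-k_{i_1}$, either singleton may be paired with either block, the fibre has four elements, and $\eps=2$. In both cases $\eps(\Sc)=\tfrac12\,\#(\text{fibre})$, which is precisely the role $\eps$ plays in Theorem~\ref{th:recursive:1}; hence $\sum_{\Sc}\eps(\Sc)g(\Sc)=\tfrac12\sum_{S}g$.

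Finally I would evaluate $\sum_{S}g$, which factorizes over blocks. For a fixed ordered complementary pair $(J_1,J_2)$, summing over the distinguished elements and using $\sum_{i\in J_s,\,k_i>0}(k_i+1)=\sum_{i\in J_s}(k_i+1)=|J_s|-2=n_s-1$ (the $k_i=-1$ terms drop out, exactly as in Lemma~\ref{lm:contrib:T1b}) yields $(n_1-1)(n_2-1)\,n_1n_2(n_1-2)!(n_2-2)!=n_1!\,n_2!=(|J_1|-1)!(|J_2|-1)!$. Summing over ordered pairs, i.e. over $I:=J_1\in\Pc_0(\kappa)$ with $3\leq|I|\leq n-3$, gives $\sum_{S}g=\sum_{I}(|I|-1)!(|I^c|-1)!$, whence $\Cc_2(\kappa)=\frac{2V^*(\kappa)}{(n-1)!}\cdot\tfrac12\sum_{I}(|I|-1)!(|I^c|-1)!$, which is \eqref{eq:contrib:T2b}. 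I expect the fibre analysis decoding the meaning of $\eps(\Sc)$ to be the only delicate point; the remaining manipulations are the same double-factorial algebra as in Lemma~\ref{lm:contrib:T1b}.
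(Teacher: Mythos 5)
Your proposal is correct and follows essentially the same route as the paper's proof: the same double-factorial telescoping identity giving $m_1m_2V^*(\kappa_1)V^*(\kappa_2)=\frac{(n_1-2)!(n_2-2)!(k_{i_1}+1)(k_{i_2}+1)}{(n-3)!}V^*(\kappa)$, the same extension of the sum over indices with $k_i>0$ to all indices in each block (the $k_i=-1$ terms vanish), and the same double-counting that converts the $\eps$-weighted sum over $\Tc_{2b}(\kappa)$ into a sum over complementary pairs in $\Pc_0(\kappa)$. The only difference is one of bookkeeping style: where you encode $\eps(\Sc)$ as half the cardinality of the fibre of a forgetful map from ordered tuples $(J_1,i_1,J_2,i_2)$, the paper splits the contribution of each partition with $\eps(\Sc)=2$ equally between the two unordered pairs it determines — the same argument, phrased differently.
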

\begin{proof}
Consider a partition $\Sc=\{I_{01},I_{02},I_1,I_2\} \in \Tc_{2b}(\kappa)$. Let $\hat{I}_s=I_{0s}\cup I_s$, for $s=1,2$. Recall that by definition, we have
$$
\sum_{i\in \hat{I}_1}k_i=\sum_{i\in \hat{I}_2}k_i=-2.
$$
To fix ideas, let us assume that $I_{01}=\{1\}$ and $I_{02}=\{2\}$. For simplicity, we will write $n_s,m_s,\kappa_s$ instead of $n_s(\Sc),m_s(\Sc), \kappa_s(\Sc), \; s=1,2$, respectively.
We have
\begin{align*}
m_1\cdot m_2\cdot V^*(\kappa_1)\cdot V^*(\kappa_2) &=  (-1)^{\frac{n_1+n_2+2}{2}}\cdot k_1\cdot k_2\cdot(n_1-2)!\cdot(n_2-2)!\cdot \frac{(k_1-2)!!}{(k_1-1)!!}\cdot \frac{(k_2-2)!!}{(k_2-1)!!}\cdot \prod_{i\geq 3}\frac{k_i!!}{(k_i+1)!!}\\
&= (-1)^{\frac{n}{2}}\cdot(k_1+1)\cdot(k_2+1)\cdot (n_1-2)!\cdot(n_2-2)!\cdot\prod_{i\geq 1}\frac{k_i!!}{(k_i+1)!!}\\
&= (k_1+1)\cdot(k_2+1)\cdot\frac{(n_1-2)!(n_2-2)!}{(n-3)!}V^*(\kappa).
\end{align*}
Let $\Cc_2(\kappa,(\hat{I}_1,\hat{I_2}))$ be the contribution in $\Cc_2(\kappa)$ of all the partitions $\Sc'=\{I'_{01},I'_{02},I'_1,I'_2\} \in \Tc_{2b}(\kappa)$ such that $I'_{01}\cup I'_{02}=\hat{I}_1$.

In the case $k_1=k_2$ we have $\eps(\Sc)=2$.  Since we do not have a preferred ordering of $I_{01},I_{02}$, we set the contribution of $\Sc$  to both $\Cc_2(\kappa,(I_{01}\cup I_1, I_{02}\cup I_2))$ and $\Cc_2(\kappa, (I_{01}\cup I_2, I_{02}\cup I_1))$ to be $\frac{2n_1n_2}{(n-1)(n-2)} m_1m_2V^*(\kappa_1)V^*(\kappa_2)$.
We then have
\begin{align*}
\Cc_2(\kappa,(\hat{I}_1,\hat{I}_2)) & = \frac{2n_1n_2}{(n-1)(n-2)}\cdot\frac{(n_1-2)!(n_2-2)!}{(n-3)!}\cdot V^*(\kappa)\cdot \sum_{\substack{k_{i_1} \in \hat{I}_1\\ k_{i_1}>0}}\sum_{\substack{k_{i_2}\in \hat{I}_2,  \\ k_{i_2}>0}}(k_{i_1}+1)(k_{i_2}+1)\\
&=\frac{2n_1n_2}{(n-1)(n-2)}\cdot\frac{(n_1-2)!(n_2-2)!}{(n-3)!}\cdot V^*(\kappa)\cdot \sum_{\substack{k_{i_1} \in \hat{I}_1}}\sum_{\substack{k_{i_2}\in \hat{I}_2}}(k_{i_1}+1)(k_{i_2}+1)\\
&=\frac{2n_1n_2(n_1-2)!(n_2-2)!V^*(\kappa)}{(n-1)!}\cdot\left(\sum_{\substack{k_{i_1} \in \hat{I}_1}}(k_{i_1}+1)\right)\left(\sum_{\substack{k_{i_2}\in \hat{I}_2}}(k_{i_1}+1) \right)\\
&=\frac{2n_1n_2(n_1-2)!(n_2-2)!V^*(\kappa)}{(n-1)!}\cdot(n_1-1)\cdot(n_2-1)\\
&=\frac{2n_1!n_2!}{(n-1)!}\cdot V^*(\kappa).
\end{align*}
(here we used the fact that either $k_i>0$ of $k_i=-1$ for all $i\in \{1,\dots,n\}$, and that $\sum_{i\in \hat{I}_1}k_i=\sum_{i\in \hat{I}_2}k_i=-2$).
Notice that $n_1=|\hat{I}_1|-1$ and $\hat{I}_2=\hat{I}^c_1$, we obtain
$$
\Cc_2(\kappa) =\sum_{\substack{\{I,I^c\}, \; I\in \Pc_0(\kappa)\\ |I| \geq 3, |I^c|\geq 3}}\Cc_2(\kappa,\{I,I^c\}) =\frac{V^*(\kappa)}{(n-1)!}\cdot\sum_{\substack{I\in \Pc_0(\kappa),\\ 3 \leq |I| \leq n-3}}(|I|-1)!(|I^c|-1)!
$$
\end{proof}

\begin{Proposition}\label{prop:contrib:total:b}
We have
\begin{equation}\label{eq:contrib:total:b}
\Cc_1(\kappa)+\Cc_2(\kappa)=\frac{V^*(\kappa)}{(n-1)!}\cdot \sum_{I\in \Pc_0(\kappa)}(|I|-1)!(|I^c|-1)!
\end{equation}
\end{Proposition}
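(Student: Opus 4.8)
The plan is to deduce the proposition directly from Lemma~\ref{lm:contrib:T1b} and Lemma~\ref{lm:contrib:T2b}: once both contributions are written with the common weight $(|I|-1)!(|I^c|-1)!$, the only thing left to check is that the two index sets fit together into all of $\Pc_0(\kappa)$, without gaps or overlaps. Note throughout that $n$ is even and that the recursion --- hence this proposition --- is only invoked for $n\geq 6$, the case $n=4$ (where necessarily $\kappa=(-1,-1,-1,-1)$) being the base case treated separately in the proof of Theorem~\ref{th:formula:d:2}.

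First I would record the elementary constraint on the cardinalities of sets in $\Pc_0(\kappa)$. If $I\in \Pc_0(\kappa)$ then $I^c\in \Pc_0(\kappa)$ as well, since $\sum_{i\in I^c}k_i=-4-\sum_{i\in I}k_i=-2$. Moreover $I=\varnothing$ is excluded because the empty sum is $0\neq -2$, and $|I|=1$ is excluded because it would force a single $k_i=-2$, which is impossible as every $k_i$ is odd. Applying this to $I^c$ as well, every $I\in \Pc_0(\kappa)$ satisfies $2\leq |I|\leq n-2$; equivalently, the cardinalities occurring in $\Pc_0(\kappa)$ are exactly $\{2,3,\dots,n-2\}$.

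Next I would match this with the two index sets. Lemma~\ref{lm:contrib:T1b} sums over $I$ with $|I|=2$ or $|I^c|=2$, i.e. $|I|\in\{2,n-2\}$, whereas Lemma~\ref{lm:contrib:T2b} sums over $3\leq |I|\leq n-3$. For $n\geq 6$ one has $3\leq n-3$, so these two cardinality sets are disjoint and their union is $\{2,3,\dots,n-2\}$, which by the previous step is precisely the set of cardinalities realized in $\Pc_0(\kappa)$. Consequently the two index sets partition $\Pc_0(\kappa)$.

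Finally I would add the two formulas. Both lemmas carry the same summand $(|I|-1)!(|I^c|-1)!$ and the same prefactor $V^*(\kappa)/(n-1)!$, so
\[
\Cc_1(\kappa)+\Cc_2(\kappa)=\frac{V^*(\kappa)}{(n-1)!}\Bigl(\sum_{\substack{I\in \Pc_0(\kappa)\\ |I|\in\{2,n-2\}}}+\sum_{\substack{I\in \Pc_0(\kappa)\\ 3\leq |I|\leq n-3}}\Bigr)(|I|-1)!(|I^c|-1)!=\frac{V^*(\kappa)}{(n-1)!}\sum_{I\in \Pc_0(\kappa)}(|I|-1)!(|I^c|-1)!,
\]
the last equality being exactly the partition established above. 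I do not expect any real obstacle here: the genuine combinatorial work (the factorial identities relating $V^*(\kappa(\Sc))$ and the products $V^*(\kappa_1)V^*(\kappa_2)$ to $V^*(\kappa)$, together with the block summations such as $\sum(k_i+1)=n-4$) has already been carried out inside the two lemmas, and what remains is only the bookkeeping of cardinalities.
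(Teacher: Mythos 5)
Your proof is correct and takes essentially the same route as the paper: the paper's own proof is a two-line remark that every $I\in \Pc_0(\kappa)$ satisfies $|I|\geq 2$ (hence $2\leq|I|\leq n-2$), so the index sets of Lemma~\ref{lm:contrib:T1b} and Lemma~\ref{lm:contrib:T2b} partition $\Pc_0(\kappa)$ and the two formulas add up to \eqref{eq:contrib:total:b}. Your extra bookkeeping (why $|I|=1$ is impossible, disjointness of the cardinality ranges for $n\geq 6$) merely spells out what the paper leaves implicit.
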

\begin{proof}
Remark that for all $I \subset \{1,\dots,n\}$ such that $\sum_{i\in I}k_i=-2$, we must have $|I|\geq 2$.
Therefore this proposition is an immediate consequence of Lemma~\ref{lm:contrib:T1b} and Lemma~\ref{lm:contrib:T2b}.
\end{proof}

Let $P$ be a subset of $\{1,\dots,n\}$ that contains all the indices $i$ such that $k_i>0$ and at most two indices $i$ such that $k_i=-1$.
Note that $k_i=-1$ for all $i\in \{1,\dots,n\}\setminus P$.
For any subset $J\subset P$, let $s(J):=\sum_{i \in J} k_i$ and denote by $J^c$ the complement of $J$ in $P$.
Remark that we always have $s(J)\geq -2$.
\begin{Lemma}\label{lm:identity:(n-1)!:a}
Let $q:=n-|P|$. Then we have
\begin{align}
\label{eq:identity:(n-1)!:a} \sum_{I\in \Pc_0(\kappa)}(|I|-1)!(|I^c|-1)! & = q!\sum_{\substack{J\subset P}}\frac{(s(J)+|J|+1)!}{(s(J)+2)!}\cdot \frac{(s(J^c)+|J^c|+1)!}{(s(J^c)+2)!}\\
\nonumber & = \frac{q!}{2!(q-2)!}\cdot 2 \cdot (n-3)!+ q!\cdot F_{|P|,2,2}((k_i)_{i\in P}),
\end{align}
where $F_{|P|,2,2}$ is the symmetric function on $|P|$ variables defined in Appendix~\ref{sec:sym:poly:append}.
\end{Lemma}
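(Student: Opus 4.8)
The plan is to prove both equalities at once by a single combinatorial regrouping, sorting the subsets $I\in\Pc_0(\kappa)$ according to their trace $J:=I\cap P$ on $P$. The starting observation is that, since $k_i=-1$ for every $i\notin P$ and $\sum_{i=1}^n k_i=-4$, one has $s(P)=\sum_{i\in P}k_i=q-4$. Writing any $I$ as a disjoint union $I=J\sqcup A$ with $J=I\cap P\subset P$ and $A=I\setminus P\subset\{1,\dots,n\}\setminus P$, the defining condition $\sum_{i\in I}k_i=-2$ becomes $s(J)-|A|=-2$, i.e. $|A|=s(J)+2$. Thus $I$ lies in $\Pc_0(\kappa)$ if and only if $A$ is an arbitrary $(s(J)+2)$-element subset of the $q$ indices outside $P$; in particular $J$ ranges freely over all subsets of $P$ because $-2\le s(J)\le q-2$ always holds (the lower bound because $P$ contains at most two indices with $k_i=-1$, the upper bound applied to $J^c$ via $s(J)=s(P)-s(J^c)=(q-4)-s(J^c)$).

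For the first equality I would then carry out the count. Set $a:=s(J)+2$ and $b:=s(J^c)+2$; since $s(J)+s(J^c)=s(P)=q-4$ we get $a+b=q$. For a fixed $J$ there are exactly $\binom{q}{a}$ admissible sets $A$, and for each of them $|I|=|J|+a$ and $|I^c|=|J^c|+(q-a)=|J^c|+b$. Hence
\[
\sum_{I\in\Pc_0(\kappa)}(|I|-1)!\,(|I^c|-1)!=\sum_{J\subset P}\binom{q}{a}\,(|J|+a-1)!\,(|J^c|+b-1)!.
\]
Writing $\binom{q}{a}=q!/(a!\,b!)$ and using $|J|+a-1=s(J)+|J|+1$, $a!=(s(J)+2)!$ (and the symmetric identities for $J^c$), the right-hand side becomes exactly $q!\sum_{J\subset P}\frac{(s(J)+|J|+1)!}{(s(J)+2)!}\cdot\frac{(s(J^c)+|J^c|+1)!}{(s(J^c)+2)!}$, which is the first claimed expression. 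I would note that no division problems arise at the extreme values $s(J)=-2$ or $s(J^c)=-2$, since then the relevant factorial in the denominator is $0!=1$.

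Finally, for the second equality I would peel off the two extreme terms $J=\emptyset$ and $J=P$. For $J=\emptyset$ one has $s(J)=0$, $|J|=0$, so $a=2$ and the term equals $\binom{q}{2}\,(2-1)!\,(n-3)!$, after using $q+|P|=n$ to rewrite $|J^c|+b-1=n-3$; the term $J=P$ is identical by symmetry. Together they contribute $2\binom{q}{2}(n-3)!=\frac{q!}{2!(q-2)!}\cdot 2\cdot(n-3)!$, which is the first summand on the right. The remaining sum, taken over all proper nonempty subsets $J\subsetneq P$, is precisely what $F_{|P|,2,2}$ is designed to package: each factor $\frac{(s(J)+|J|+1)!}{(s(J)+2)!}$ is the product of the $|J|-1$ consecutive integers $s(J)+3,\dots,s(J)+|J|+1$, hence a polynomial of degree $|J|-1$ in the variables $(k_i)_{i\in J}$, symmetric under permuting the $k_i$. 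The step I expect to be the main obstacle is exactly this last identification: matching the residual sum over proper nonempty subsets term-for-term with the explicit symmetric polynomial $F_{|P|,2,2}$ from Appendix~\ref{sec:sym:poly:append}, and in particular checking that the ``$2,2$'' subscripts correspond to the two shifts $s(J)+2$ and $s(J^c)+2$ appearing above. Everything else is bookkeeping with binomial coefficients, which is why the symmetric-function computation itself is relegated to the appendix.
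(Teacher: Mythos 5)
Your proposal is correct and follows essentially the same route as the paper's own proof: fibering $\Pc_0(\kappa)$ over the trace $J=I\cap P$, counting $\binom{q}{s(J)+2}$ completions inside $P^c$, rewriting $q-s(J)-2=s(J^c)+2$ via $s(J)+s(J^c)=q-4$, peeling off the terms $J=\varnothing$ and $J=P$, and recognizing the remaining sum as $F_{|P|,2,2}$ directly from its definition (each factor being $P_{|J|-1,2}(s(J))$). The final identification you flagged as a potential obstacle is in fact immediate, exactly as you computed, so no gap remains.
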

\begin{proof}
Given  $J\subset P$, since $s(J)\geq -2$, one can always add to $J$ some subset of $P^c$, which can be empty, to get a subset in $\Pc_0(\kappa)$.
There are exactly $\left(\begin{array}{c} q \\ s(J)+2 \end{array}\right)$ subsets $I\in \Pc_0(\kappa)$ such that $J=I\cap P$. Note that for such $I$, we must have $|I|=|J|+s(J)+2$.
Therefore
\begin{align*}
\sum_{I\in \Pc_0(\kappa)}(|I|-1)!(|I^c|-1)! & = \sum_{J\subset P}\frac{q!}{(s(J)+2)!(q-s(J)-2)!}\cdot (|J|+s(J)+1)!(|J^c|+s(J^c)+1)!\\
& = \sum_{J\subset P}\frac{q!}{(s(J)+2)!(s(J^c)+2)!}\cdot (|J|+s(J)+1)!(|J^c|+s(J^c)+1)!\\
& = q!\sum_{J\subset P}\frac{(s(J)+|J|+1)!}{(s(J)+2)!}\cdot \frac{(s(J^c)+|J^c|+1)!}{(s(J^c)+2)!}
\end{align*}
here we used the fact that $s(J)+s(J^c)=q-4$.

Since $\sum_{i=1}^nk_i=s(P)-q=-4$, we have $s(P)=q-4$. Therefore, if $I=\varnothing$ or $I=P$, then the term associated to $I$ in \eqref{eq:identity:(n-1)!:a} is
$$
\frac{q!}{2!(q-2)!}\cdot 1!\cdot (s(P)+p+1)!=\frac{q!}{2!(q-2)!}\cdot (n-3)!
$$
By definition, we have
$$
F_{|P|,2,2}((k_i)_{i\in P})=\sum_{\substack{J\subset P, \\ J\not\in\{\varnothing, P\}}}\frac{(s(J)+|J|+1)!}{(s(J)+2)!}\cdot \frac{(s(J^c)+|J^c|+1)!}{(s(J^c)+2)!}.
$$
The lemma is then proved.
\end{proof}

\subsection{Proof of Theorem~\ref{th:formula:d:2}}
\begin{proof}
Theorem~\ref{th:formula:d:2}  follows from the following identity
\begin{equation}\label{eq:identity:(n-1)!}
\sum_{I\in \Pc_0(\kappa)}(|I|-1)!(|I^c|-1)!=(n-1)!
\end{equation}
Indeed, combined with Proposition~\ref{prop:contrib:total:b}, \eqref{eq:identity:(n-1)!} implies that the numbers $V(\kappa)$ and $V^*(\kappa)$ satisfy the same recursive relations. Since $V(-1^4)=V^*(-1^4)=1$, we must have $V(\kappa)=V^*(\kappa)$ for all $\kappa$.

Let $p$ be the number of positive entries in $\kappa$, and $q:=n-p$.
We will prove \eqref{eq:identity:(n-1)!} by induction on $p$.
If $p=1$, we can assume that $k_1$ is the unique positive entry of $\kappa$.
By taking $P=\{1\}$, we have $q=n-1$, and it follows from \eqref{eq:identity:(n-1)!:a} that
$$
\sum_{I\in \Pc_0(\kappa)}(|I|-1)!(|I^c|-1)!  =\frac{(n-1)!}{2!(n-3)!}\cdot2\cdot(n-3)!=(n-1)!
$$
Thus \eqref{eq:identity:(n-1)!} is true in this case.

\medskip

Assume now that \eqref{eq:identity:(n-1)!}  holds for some $\kappa'=(k'_1,\dots,k'_n)$ where $\kappa'$ has exactly $p-1$ positive entries.
We can assume that $k_i>0$ if and only if  $i\in \{1,\dots,p\}$, and that $k'_i>0$ if and only if $i\in\{1,\dots,p-1\}$.
For any $J\subset\{1,\dots,p\}$, let us write
$$
s(J):=\sum_{i\in J}k_i \quad \text{ and } \quad s'(J):=\sum_{i\in J}k'_i.
$$
Apply Lemma~\ref{lm:identity:(n-1)!:a} to $\kappa'$ with  the set $P=\{1,\dots,p\}$, we get
\begin{equation}\label{eq:identity:(n-1)!:b}
(n-1)!= q!\sum_{\substack{J\subset P}}\frac{(s'(J)+|J|+1)!}{(s'(J)+2)!}\cdot \frac{(s'(J^c)+|J^c|+1)!}{(s'(J^c)+2)!}=\frac{2q!(n-3)!}{2!(q-2)!}+F_{p,2,2}(k'_1,\dots,k'_p).
\end{equation}
where $F_{p,2,2}$ is the function defined in Theorem~\ref{th:sym:poly}.
We now remark that
$$
k'_1+\dots+k'_p=k_1+\dots+k_p=q-4,
$$
where $q=n-p$. Therefore, Theorem~\ref{th:sym:poly} implies that $F_{p,2,2}(k'_1,\dots,k'_p)=F_{p,2,2}(k_1,\dots,k_p)$.
It follows  that
\begin{align*}
(n-1)! &= \sum_{I\in \Pc_0(\kappa')}(|I|-1)!(|I^c|-1)! \quad (\text{ by assumption})\\
  &=\frac{2q!(n-3)!}{2!(q-2)!}+ q!F_{p,2,2}(k'_1,\dots,k'_p) \quad (\hbox{ by Lemma \ref{lm:identity:(n-1)!:a}})\\
&= \frac{2q!(n-3)!}{2!(q-2)!}+ q!F_{p,2,2}(k_1,\dots,k_p) \quad (\hbox{ by Theorem~\ref{th:sym:poly}})\\
&= \sum_{I\in \Pc_0(\kappa)}(|I|-1)!(|I^c|-1)! \quad (\hbox{ by Lemma~\ref{lm:identity:(n-1)!:a}}).
\end{align*}
Thus \eqref{eq:identity:(n-1)!} holds for all $\kappa$ satisfying $(*)$, and the theorem follows.
\end{proof}
\subsection{Comparison with Kontsevich's formula}\label{sec:compare:Kont:form}
Let $V_{MV}(\kappa)$ denote the total volume of $\Pb\Omega^2\Mod_{0,n}(\kappa)$ with respect to the Masur-Veech volume $d\vol^{MV}_1$.
Here the volume forms $d\vol^{MV}$ and $d\vol_1^{MV}$ are normalized as in \cite[\textsection 4.1]{AEZ16}.
In particular,  we have (see \cite[(4.5)]{AEZ16})
\begin{equation}\label{eq:MV:vol:norm:AEZ}
V_{MV}(\kappa)=\dim_\R\Omega^2\Mod_{0,n}(\kappa)\cdot\vol^{MV}(\Omega^2_{1}\Mod_{0,n}(\kappa))=2(n-2)\cdot\vol^{MV}(\Omega^2_1\Mod_{0,n}(\kappa))
\end{equation}
where $\Omega^2_1\Mod_{0,n}(\kappa)$ is the set of $(X,q) \in \Omega^2\Mod_{0,n}(\kappa)$ such that the area of $(X,|q|)$ is at most $1/2$.

\begin{Proposition}\label{prop:compare:Kont:form}
Assume that $\kappa=(k_1,\dots,k_n)$ satisfies $k_1+\dots+k_n=-4$ and all the $k_i$'s are odd. Then the volume forms $d\vol^{MV}$ and $d\vol$ satisfy
\begin{equation}\label{eq:ratio:vol:forms}
d\vol^{MV}=(-1)^{\frac{n-2}{2}}\cdot 2^{n-2}\cdot d\vol.
\end{equation}
As a consequence, we have
\begin{equation}\label{eq:Kont:form:proof}
V_{MV}(\kappa)= 2\cdot\pi^{n-2}\cdot \prod_{i=1}^n\frac{k_i!!}{(k_i+1)!!}.
\end{equation}
\end{Proposition}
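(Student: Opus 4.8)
The plan is to prove \eqref{eq:ratio:vol:forms} by a linear-algebra computation in period coordinates, and then to deduce \eqref{eq:Kont:form:proof} mechanically by chaining that ratio with the value of $\vol_1$ coming from Theorem~\ref{th:vol:quad:diff:stra} together with the normalizations \eqref{eq:rel:vols:proj:n:area:1} and \eqref{eq:MV:vol:norm:AEZ}.

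For the ratio, fix $(X,q)\in\Omega^2\Mod_{0,n}(\kappa)$ with canonical double cover $(\hX,\homg,\tau)$ (so $d=2$ and $\zeta=-1$). Since every $k_i$ is odd we have $2\nmid k_i$, hence $\ker\pp\cap V_\zeta=\{0\}$ and $\pp$ is injective on $V_\zeta$. As $V_\zeta=\ker(\tau^*+\id)$ is defined over $\Q$, the lattice $V_\zeta^\Z:=V_\zeta\cap H^1(\hX,\hZ;\Z)$ has rank $N=\dim_\C\Omega^2\Mod_{0,n}(\kappa)=n-2$ by Theorem~\ref{thm:strata:dim}, and $\Lambda_\zeta=V_\zeta^\Z\oplus\imath V_\zeta^\Z$. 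I would choose a $\Z$-basis $e_1,\dots,e_N$ of $V_\zeta^\Z$ and the associated linear coordinates $z_1,\dots,z_N$ on $V_\zeta$; then $\Lambda_\zeta$ has covolume $1$, so $d\vol^{MV}=\left(\frac{\imath}{2}\right)^N dz_1\wedge d\bar z_1\wedge\dots\wedge dz_N\wedge d\bar z_N$, while \eqref{eq:local:form:vol} gives $d\vol=\det(H)\cdot d\vol^{MV}$ with $H_{jk}=(e_j,e_k)$. Because each $e_j$ is a real class, $(e_j,e_k)=\frac{\imath}{2}\int_{\hX}\pp(e_j)\wedge\pp(e_k)=\frac{\imath}{2}S_{jk}$, where $S=(S_{jk})$ is the integral, antisymmetric intersection matrix of $\pp(e_1),\dots,\pp(e_N)$. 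Thus $\det(H)=\left(\frac{\imath}{2}\right)^N\det(S)$, and since $N=n-2$ is even, $\left(\frac{\imath}{2}\right)^N=(-1)^{(n-2)/2}2^{-(n-2)}$.

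It then remains to show $\det(S)=1$. The cover $\hX\to\CP^1$ is branched exactly at the $n$ marked points (all $k_i$ odd), so $\hX$ has genus $\hat{g}=(n-2)/2$ and $\tau$ fixes each of the $n$ points of $\hZ$; consequently $\tau$ acts trivially on $\tilde H_0(\hZ;\Z)$. Moreover the quotient has genus $0$, so $\tau_*=-\id$ on the torsion-free group $H_1(\hX;\Z)$. Applying this to the exact sequence $0\to H_1(\hX;\Z)\to H_1(\hX,\hZ;\Z)\xrightarrow{\partial}\tilde H_0(\hZ;\Z)\to 0$ shows that any anti-invariant relative class $x$ satisfies $\partial x=-\partial x$, hence $\partial x=0$; together with $\tau_*=-\id$ on $H_1(\hX;\Z)$ this gives $H_1(\hX,\hZ;\Z)^-=H_1(\hX;\Z)$. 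Dually, $\pp$ identifies $V_\zeta^\Z$ with the full integral lattice $H^1(\hX;\Z)$ carrying its cup-product intersection form, which is unimodular; therefore $S$ is the Gram matrix of a unimodular symplectic form in an integral basis and $\det(S)=\mathrm{Pf}(S)^2=1$. This yields $\det(H)=(-1)^{(n-2)/2}2^{-(n-2)}$ and hence \eqref{eq:ratio:vol:forms}. I expect this unimodularity — in particular ruling out an extra power of $2$ in passing between the anti-invariant relative lattice and the absolute integral lattice — to be the only delicate point; the case $n=4$, where $\hX$ is an elliptic curve and $S=\left(\begin{smallmatrix}0&1\\-1&0\end{smallmatrix}\right)$ gives $\det H=-\tfrac14=(-1)^{(n-2)/2}2^{-(n-2)}$, is a reassuring check.

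Finally, \eqref{eq:ratio:vol:forms} propagates to the projectivized and area-normalized forms, giving $\vol_1^{MV}=(-1)^{(n-2)/2}2^{n-2}\vol_1$ on $\Pb\Omega^2\Mod_{0,n}(\kappa)$. Substituting Theorem~\ref{th:vol:quad:diff:stra}, the two factors $(-1)^{(n-2)/2}$ cancel and $2^{n-2}/2^{n-3}=2$, so $\vol_1^{MV}(\Pb\Omega^2\Mod_{0,n}(\kappa))=\frac{2\pi^{n-2}}{n-2}\prod_{i=1}^n\frac{k_i!!}{(k_i+1)!!}$. Combining \eqref{eq:MV:vol:norm:AEZ} with \eqref{eq:rel:vols:proj:n:area:1} for $d=2$ (both valid for $d\vol^{MV}$) gives $V_{MV}(\kappa)=2(n-2)\cdot\tfrac12\,\vol_1^{MV}(\Pb\Omega^2\Mod_{0,n}(\kappa))=(n-2)\,\vol_1^{MV}(\Pb\Omega^2\Mod_{0,n}(\kappa))=2\pi^{n-2}\prod_{i=1}^n\frac{k_i!!}{(k_i+1)!!}$, which is exactly \eqref{eq:Kont:form:proof}.
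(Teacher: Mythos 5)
Your reduction of \eqref{eq:Kont:form:proof} to \eqref{eq:ratio:vol:forms} in the last paragraph is correct and coincides with the paper's, but the proof of \eqref{eq:ratio:vol:forms} itself has a fatal gap: the unimodularity claim is false. Your homological computation $H_1(\hX,\hZ;\Z)^-=H_1(\hX;\Z)$ is right, but the step ``dually, $\pp$ identifies $V_\zeta^\Z$ with the full integral lattice $H^1(\hX;\Z)$'' does not follow: over $\Z$, taking $\tau$-anti-invariants does not commute with dualization. In fact $\pp(V_\zeta^\Z)=2H^1(\hX;\Z)$. The reason is that every anti-invariant integral relative class $f$ takes \emph{even} values on absolute cycles: $H_1(\hX;\Z)$ is generated by cycles of the form $\sigma=\gamma-\tau_*\gamma$ (preimages of arcs joining branch points), and anti-invariance gives $f(\sigma)=f(\gamma)-f(\tau_*\gamma)=2f(\gamma)$. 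Your own ``reassuring check'' at $n=4$ is where this is easiest to see, and it fails: for $\hX=\C/\Z[\imath]$, $\tau(z)=-z$, write the class $a$ of $[0,1]$ as $\gamma+\delta$ with $\gamma$ from $0$ to $1/2$ and $\delta$ from $1/2$ to $1$; then $\tau_*\gamma=-\delta$, so any anti-invariant integral $f$ satisfies $f(\delta)=f(\gamma)$, hence $f(a)=2f(\gamma)\in 2\Z$. Thus the Gram matrix of the intersection form on $\pp(V_\zeta^\Z)$ is $\left(\begin{smallmatrix}0&4\\-4&0\end{smallmatrix}\right)$, i.e.\ $\det S=16$, and in general $\det S=2^{2(n-2)}$, not $1$.

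This error interacts with a normalization issue, and the two exactly cancel -- which is why you land on the stated formula. The $d\vol^{MV}$ of the Proposition is normalized as in \cite{AEZ16}, \textsection 4.1: the relevant lattice consists of forms taking values in $\Z\oplus\imath\Z$ on the \emph{anti-invariant homology} $H_1^-(\hX,\hZ;\Z)$, which by your (correct) lemma equals $H_1(\hX;\Z)$; so AEZ's lattice is the full absolute lattice $H^1(\hX;\Z[\imath])$. That is exactly what the paper's proof uses: it identifies the stratum with $H^1(\hX;\C)$ via $\pp$, observes that the area Hermitian form in a symplectic basis has matrix $\frac{\imath}{2}\left(\begin{smallmatrix}0&I_g\\-I_g&0\end{smallmatrix}\right)$, and reads off $\det H=(\imath/2)^{n-2}$ with no lattice subtlety. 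You instead normalized by the lattice $\Lambda_\zeta$ of \textsection~\ref{sec:vol:form:compare:MV} (anti-invariant classes integral on \emph{all} relative cycles); since $\pp(\Lambda_\zeta)=2H^1(\hX;\Z[\imath])$ has index $2^{2(n-2)}$ in the absolute lattice, this is a genuinely different volume form, and with it the correct computation would give $d\vol^{MV}=(-1)^{\frac{n-2}{2}}2^{-(n-2)}\,d\vol$, which contradicts \eqref{eq:ratio:vol:forms} and would not reproduce Kontsevich's formula. To repair the argument, replace the unimodularity claim by the observation that AEZ's lattice is the $\Z[\imath]$-dual of $H_1^-(\hX,\hZ;\Z)=H_1(\hX;\Z)$, i.e.\ work with $H^1(\hX;\Z[\imath])$ throughout, as the paper does; then $\det S=1$ holds trivially because the cup product is unimodular on $H^1(\hX;\Z)$.
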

\begin{Remark}\label{rk:Kont:form:AEZ}
Formula \eqref{eq:Kont:form:proof} was conjectured by Kontsevich and proved by Athreya-Eskin-Zorich in \cite{AEZ16}.
\end{Remark}
\begin{proof}
We first prove \eqref{eq:ratio:vol:forms}. Consider a point $(\CP^1,x_1,\dots,x_n,q)\in \Omega^2\Mod_{0,n}(\kappa)$, (here $x_1,\dots,x_n$ are $n$ marked points on $\CP^1$, and $q$ is a quadratic differential having a zero of order $k_i$ at $x_i$). Remark that in this case  $n$ must be even and $n\geq 4$.

The canonical double cover of $(\CP^1,x_1,\dots,x_n,q)$ is a hyperelliptic Riemann surface $\hX$ of genus $g$ with $n=2g+2$.
Since all the $k_i$'s are odd, the preimage of $x_i$ consists of a single point in $\hX$, which will be denoted by $\hx_i$.
Let $\tau$ denote the hyperelliptic involution of $\hX$. Note that $\{\hx_1,\dots,\hx_n\}$ is the set of fixed points of $\tau$.
The space $\Omega^2\Mod_{0,n}(\kappa)$ is locally modeled by the eigenspace $H^1(\hX,\{\hx_1,\dots,\hx_n\},\C)^-$ associated to the eigenvalue $-1$ of the action of $\tau$ on  $H^1(\hX,\{\hx_1,\dots,\hx_n\},\C)$. Since $\hX$ is hyperelliptic, we have $\tau^*\eta=-\eta$ for all $\eta \in H^1(\hX,\C)$, which means that $H^1(\hX,\C)=H^1(\hX,\C)^-$. It is a well known fact that in this case the projection $\pp: H^1(\hX,\{\hx_1,\dots,\hx_n\},\C)^- \to H^1(\hX,\C)^-=H^1(\hX,\C)$ is an isomorphism. Thus we can locally identified $\Omega^2\Mod_{0,n}(\kappa)$ with $H^1(\hX,\C)$.

Let $\{a_1,\dots,a_g,b_1,\dots,b_g\}$ be a symplectic basis of $H_1(\hX,\Z)$. Using this basis, we can identify $H^1(\hX,\C)$ with $\C^{2g}=\C^{n-2}$.
The Masur-Veech volume form is normalized such that the lattice $(\Z\oplus \imath\Z)^{n-2}$ has covolume $1$ (see \cite[\textsection 4.1]{AEZ16}).
This means that $d\vol^{MV}$ is the Lebesgue measure of $\C^{n-2}$.
In the standard coordinates $(z_1,\dots,z_{n-2})$ of $\C^{n-2}$, we can write
$$
d\vol^{MV}=\left(\frac{\imath}{2}\right)^{n-2}dz_1\wedge d\bar{z}_1\wedge\dots\wedge dz_{n-2}\wedge d\bar{z}_{n-2}.
$$
In this context, the volume form $d\vol$ is defined as follows: since the intersection form on $H^1(\hX,\C)$ is given by a non-degenerate Hermitian matrix $H$ of signature $(g,g)$, $\vartheta:=\mathrm{Im}(H)$ is a symplectic form on $H^1(\hX,\C)$. Therefore, $\vartheta^{2g}$ is a volume form on $H^1(\hX,\C)$, and $d\vol$ is defined to be the volume form on $\Omega^2\Mod_{0,n}(\kappa)$ that is induced by $\vartheta^{2g}/(2g)!$ (recall that $\Omega^2\Mod_{0,n}(\kappa)$ is locally identified with $H^1(\hX,\C)$).
Since the matrix of the intersection form in the basis $\{a_1,\dots,a_g,b_1,\dots,b_g\}$ is $H=\frac{\imath}{2}\cdot\left(
\begin{array}{cc}
0 & I_g \\
-I_g & 0
\end{array}
\right)
$, we have
$$
d\vol =\frac{\vartheta^{2g}}{(2g)!} = \det(H)\cdot \left(\frac{\imath}{2}\right)^{n-2}dz_1\wedge d\bar{z}_1\wedge\dots\wedge dz_{n-2}\wedge d\bar{z}_{n-2} = \frac{(-1)^{\frac{n-2}{2}}}{2^{n-2}}\cdot d\vol^{MV}
$$
and \eqref{eq:ratio:vol:forms} follows.
We now have
\begin{align*}
V_{MV}(\kappa) &= 2\cdot(n-2)\cdot\vol^{MV}(\Omega^2_1\Mod_{0,n}(\kappa)) \quad (\hbox{ by the normalization \eqref{eq:MV:vol:norm:AEZ}})\\
               &=2\cdot(n-2)\cdot(-1)^{\frac{n-2}{2}}\cdot 2^{n-2}\cdot \vol(\Omega^2_1\Mod_{0,n}(\kappa)) \quad (\hbox{by \eqref{eq:ratio:vol:forms}})\\
               &=2\cdot(n-2)\cdot(-1)^{\frac{n-2}{2}}\cdot 2^{n-2}\cdot\frac{1}{2}\cdot \vol_1(\Pb\Omega^2\Mod_{0,n}(\kappa)) \quad (\hbox{ by  the relation \eqref{eq:rel:vols:proj:n:area:1}})\\
               &=2\cdot (n-2)\cdot(-1)^{\frac{n-2}{2}}\cdot 2^{n-3} \cdot (-1)^{\frac{n-2}{2}}\cdot\frac{\pi^{n-2}}{2^{n-3}(n-2)}\cdot \prod_{i=1}^n\frac{k_i!!}{(k_i+1)!!}  \quad (\hbox{ by Theorem~\ref{th:vol:quad:diff:stra}})\\
               &=2\cdot\pi^{n-2}\cdot\prod_{i=1}^n\frac{k_i!!}{(k_i+1)!!}.
\end{align*}
The proposition is then proved.
\end{proof}


\section{Volume of moduli spaces of flat surfaces}\label{sec:vol:mod:flat:surf}
Throughout this section, $\mu=(\mu_1,\dots,\mu_n)$ will be a vector in $(\R_{<1})^n$ which satisfies $\mu_1+\dots+\mu_n=2$. Recall that $\CC(\mu)$ is the space of flat surfaces of genus $0$, considered up to a rescaling, which have exactly $n$ singular points with cones angles given by $\theta_i:=2\pi(1-\mu_i), \; i=1,\dots,n$.

\subsection{Local coordinates and volume form}
We first briefly recall the construction of local charts and the volume form $\vol_\mu$ on $\CC(\mu)$. The content of this paragraph is well known, we refer to  \cite{Th98} or \cite{Veech:FS} for a more detailed account on the matters.

Consider a flat surface $M$ representing an element of $\CC(\mu)$. Let $\Sig:=\{s_1,\dots,s_n\}$ denote the set of singularities of $M$, where the cone angle at $s_i$ is $\theta_i$.
We pick  a geodesic triangulation $\T$ of $M$ with $\Sig$ being the vertex set. Using the developing map, one can associate to each edge $e$ of $\T$ (with a chosen orientation) a complex number $z(e)$. The collection of complex numbers associated to the edges of $\T$  gives us a vector $Z \in \C^{N_1}$, where $N_1$ is the number of edges of $\T$. The coordinates of $Z$ satisfy some obvious linear relations relative to boundaries of the triangles in $\T$ and the holonomy of the flat metric. This means that $Z$ is contained in a linear subspace $V_\mu$ of $\C^{N_1}$ of dimension $n-2$. Note that if $M'$ is another flat surface close enough to $M$ in $\CC(\mu)$ then the same construction applied to $M'$ yields a vector $Z'\in V_\mu$ close to $Z$.

There is a $N_1\times N_1$ Hermitian matrix $H$ independent of $\mu$ such that
\begin{equation}\label{eq:FS:area:herm:form}
\Aa(M)={}^t\ol{Z}\cdot H \cdot Z.
\end{equation}
In \cite{Veech:FS}, Veech showed that if none of the weights $\mu_i$ is an integer, then the restriction of $H$ to $V_\mu$, which will be denoted by $H_\mu$, is non-degenerate.
In this case, let $\vartheta_\mu$ denote the imaginary part of $H_\mu$. Then
$$
d\Vol_\mu:=\frac{\vartheta^{n-2}_\mu}{(n-2)!}
$$
is a volume form on $V_\mu$.

Let $V_\mu^+$ denote the set $\{Z \in V_\mu, \; {}^t\ol{Z}\cdot H \cdot Z >0\}$, and $\Pb V^+_\mu$ be the projection of $V_\mu^+$ in $\Pb V_\mu$.
We obtain a volume form $d\vol_\mu$ on $\Pb V^+_\mu$ from $d\Vol_\mu$
as follows: for every open subset $B \subset \Pb V^+_\mu$, let
$$
C_1(B):=\{Z \in V_\mu^+, \; \C\cdot Z \in B, {}^t\ol{Z}\cdot H\cdot Z < 1\}.
$$
Then $\vol_\mu(B):=\Vol_\mu(C_1(B))$.
Since  $\CC(\mu)$ is locally modeled by $\Pb V^+_\mu$, $d\vol_\mu$ provides us with a well defined volume form on $\CC(\mu)$. It has been shown by Thurston~\cite{Th98} and Veech~\cite{Veech:FS} that the total volume of $\CC(\mu)$ with respect to $d\vol_\mu$  is finite (see also \cite{Ng12} for an alternate proof).

In the case $\mu\in \left(\frac{1}{d}\Q\right)^n$, the space $\CC(\mu)$ is in fact a stratum of projectivized $d$-differentials in genus $0$, and we have $d\vol_\mu=d\vol_1$, where $\d\vol_1$ is the  volume form defined in \textsection~\ref{sec:vol:form:def}.


Recall that $\Lb_n$ is the set of $\mu=(\mu_1,\dots,\mu_n)\in (\R_{<1})^n$  such that $\mu_1+\dots+\mu_n=2$.
Let $\Lb^*_n$ be the set of $\mu\in\Lb_n$ such that none of the entries of $\mu$ is an integer.
We now show
\begin{Lemma}\label{lm:vol:funct:conti}
There is a continuous function $\AA_n: \Lb_n \to \R$ which satisfies
\begin{itemize}
\item for all $\mu\in \Lb_n^*$, $\vol_\mu(\CC(\mu))=\frac{(-1)^{n-3}\pi^{n-2}}{(n-2)!}\AA_n(\mu)$,

\item if $\mu \in \Lb_n\setminus\Lb_n^*$, then $\AA_n(\mu)=0$,

\item whenever $\mu\in \Q^n\cap\Lb_n$, we have
\begin{equation}\label{eq:vol:mod:FS:rat:weight}
\AA_n(\mu)=\frac{1}{d^{n-3}}\cdot\Jc_n(\mu)=\frac{1}{d^{n-3}}\cdot(c_1^{n-3}(\hat{\Lc}_\mu)\cap[\blowupsp]),
\end{equation}
where $d$ is the smallest positive integer such that $d\cdot\mu_i\in \Z$, for all $i=1,\dots,n$, and $\kappa=d\cdot\mu \in \Z^n$.
\end{itemize}
\end{Lemma}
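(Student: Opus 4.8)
The plan is to run an induction on $n$, using the recursive formula of Theorem~\ref{th:recursive:1} as the engine after re-expressing it in terms of the normalized quantities $\AA_k$. The first step is bookkeeping: for rational $\mu\in\Lb_n$ set $\AA_n(\mu):=\Jc_n(\mu)/d^{n-3}$, where $d$ is the least common denominator of the $\mu_i$, and divide every summand of \eqref{eq:recursive:1} by $d^{n-3}$. Using $\Jc_k(\nu)=e^{k-3}\AA_k(\nu)$ with $e$ the least common denominator of $\nu$, together with the observation that the numbers $e(\Sc),e_1(\Sc),e_2(\Sc)$ are precisely the denominators of the derived weight vectors $\nu(\Sc),\nu_1(\Sc),\nu_2(\Sc)$ (because $e\mu_i\in\Z$ for $i\in I_j$ forces $e\mu(I_j)\in\Z$), I would check that every power of $d$ and of $e$ cancels. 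The upshot is a relation of the schematic form
\[
\AA_n(\mu)=\sum_{\Sc\in\Tc(\mu)}\pm\,b_\Sc(\mu)\prod_i\AA_{k_i}(\nu_i(\Sc)),\qquad b_\Sc(\mu)=\frac{a_\Sc}{d^{c(\Sc)}},\quad c(\Sc)\in\{1,2,3\},
\]
and the key point is that each $b_\Sc(\mu)$ is a polynomial in $\mu$ with rational coefficients, independent of $d$: indeed $m(\Sc)=d(\mu(I)-1)$ and $m_j(\Sc)=d(\mu(I_j)-1)$ each carry the compensating factor of $d$, so e.g. for $\Sc\in\Tc_{1a}(\mu)$ one gets $a_\Sc/d=\tfrac{n-3}{(n-1)(n-2)}-\tfrac{\mu(I_1)-1}{n-2}$. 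One also checks $\nu(\Sc)\in\Lb_{n-1}$ etc. (the new first coordinate $2-\mu(I_1)<1$ exactly because $\mu(I_1)>1$), so the right-hand side is meaningful. This identity holds a priori only on rational $\mu\in\Lb^*_n$, and it is the object I would promote to a continuous function.

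Second, I would set up the induction. The base case $n=3$ is trivial, since no integer weight is possible in $\Lb_3$ (two weights $<1$ cannot sum to $2-\mu_i$ with $\mu_i\in\Z$), so $\Lb_3=\Lb^*_3$ and $\AA_3\equiv 1$; the low cases $n=4$ are supplied by Appendix~\ref{sec:low:dim:calcul}. Assuming continuous functions $\AA_{n-1},\AA_{n-2},\dots$ enjoying the three asserted properties, I define $\AA_n$ on all of $\Lb_n$ by the displayed right-hand side, now read with the inductively continuous $\AA_k$ and with the partition families $\Tc_{\bullet}(\mu)$ varying with $\mu$. Since the coefficients $b_\Sc$ are polynomials, the only possible failures of continuity occur along the walls where some $\mu(I)=1$ or some $\mu_i\in\Z$, i.e. exactly where the membership conditions defining $\Tc_{1a},\Tc_{1b},\Tc_{2a},\Tc_{2b}$ jump.

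Third, I would match contributions across each wall $\mu(J)=1$. When a light two-element part becomes heavy (an $I_0$ with $|I_0|=2$ crosses $\mu(I_0)=1$, equivalently the coordinate $2-\mu(I_1)$ of a $\Tc_{1a}$ summand tends to $1$), that summand leaves the sum, and its limit must be reconciled with the $\Tc_{1b}$ summands that switch on along the wall; analogously $\Tc_{2a}$ is reconciled with $\Tc_{2b}$. Carrying this out relies on the inductive continuity of the lower $\AA_k$ and on their vanishing as a weight tends to an integer. Granting continuity of the resulting $\AA_n$ on $\Lb_n$, I would then identify it with the volume: on starred rationals $\AA_n=\Jc_n/d^{n-3}$ equals $\tfrac{(n-2)!}{(-1)^{n-3}\pi^{n-2}}\vol_\mu(\CC(\mu))$ by Theorem~\ref{th:vol:n:inters:nbr} and $\D\vol_\mu=\D\vol_1$, and since $\mu\mapsto\vol_\mu(\CC(\mu))$ is continuous on $\Lb^*_n$ (the Hermitian form $H_\mu$ and hence the integrand and integration region depend continuously on $\mu$, $H_\mu$ being non-degenerate throughout $\Lb^*_n$ by Veech), two continuous functions agreeing on the dense set of starred rationals agree on all of $\Lb^*_n$. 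This yields properties (a) and the rational case of (c).

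Finally, the vanishing on $\Lb_n\setminus\Lb^*_n$. On integer-locus rationals $\Jc_n(\mu)=0$ by Lemma~\ref{lm:ki:integer:I:zero}, so $\AA_n$ vanishes there, and these points are dense in the integer locus; continuity then forces $\AA_n\equiv 0$ on $\Lb_n\setminus\Lb^*_n$, giving (b) and completing (c). I expect this last point to be the main obstacle, because the naive term-by-term analysis of the recursion does not make it manifest: a $\Tc_{1a}$ summand in which the integer weight $\mu_{i}$ sits inside a light two-element part $I_0$ survives (its weight vector $\nu(\Sc)$ need not have any integer coordinate), so the vanishing must come either from a cancellation among such surviving terms or, more robustly, from the geometric side. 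In the latter approach I would argue that as $\mu$ approaches the integer locus the form $H_\mu$ on $V_\mu$ degenerates, so by \eqref{eq:local:form:vol} the volume form $\vartheta_\mu^{\,n-2}/(n-2)!=\det(H_\mu)\cdot(\text{Lebesgue})$ acquires the vanishing factor $\det(H_\mu)$; the delicate part, and the technical heart of the proof, is to control the region $\{0<{}^t\ol Z H_\mu Z<1\}$ uniformly near the degeneration so that $\det(H_\mu)\to 0$ actually forces $\vol_\mu(\CC(\mu))\to 0$ rather than being compensated by the region spreading out.
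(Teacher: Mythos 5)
Your proposal inverts the logical structure of the paper's argument, and the inversion is where it breaks. The paper does \emph{not} build $\AA_n$ out of the recursion of Theorem~\ref{th:recursive:1}: it defines $\AA_n(\mu)$ for every $\mu\in\Lb_n$ at once, as the integral over $\Mod_{0,n}$ of $\Phi_\mu^*\bigl(\tfrac{\imath}{2\pi}\Theta^\#\bigr)^{n-3}$, where $\Phi(\cdot,\mu)$ is the period map sending a pointed sphere to the line spanned by the periods of $dx/\prod_{i=1}^n(x-x_i)^{\mu_i}$ in $\CP^{N_1-1}$ (jointly continuous in the moduli point and in $\mu$), and $\Theta^\#$ is the curvature of the metric induced by the fixed Hermitian matrix $H$ on the tautological bundle over $\CP^{N_1-1}_+$. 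With this definition, continuity is built into the construction; the vanishing on $\Lb_n\setminus\Lb_n^*$ is immediate because when some $\mu_i\in\Z$ the restriction of $H$ to $V_\mu$ is degenerate, so the restriction of $(\Theta^\#)^{n-3}$ to $\Pb V_\mu^+$ vanishes identically \emph{as a form} (no limiting statement about volumes is needed, so your worry about the region $\{0<{}^t\ol{Z}\cdot H\cdot Z<1\}$ spreading out never arises); and properties (a) and (c) follow pointwise from \cite{Ng22} and \cite{CMZ19}. The recursion only enters afterwards, in the proof of Theorem~\ref{th:vol:funct}, and only on sign domains, where $\Tc_{1b}(\mu)=\Tc_{2b}(\mu)=\vide$ (Claim~\ref{clm:sign:dom:adm:parti}), so the paper never has to evaluate it on a wall.

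Concretely, your route has three unproven load-bearing steps. (i) Wall continuity: the families $\Tc_{1b}(\mu)$, $\Tc_{2b}(\mu)$ are nonempty only \emph{on} the walls $\mu(I)=1$. As you note, the $\Tc_{1a}/\Tc_{2a}$ terms that disappear at such a wall tend to $0$ there (their weight vectors acquire an integer entry, so the inductive vanishing applies), and the surviving terms are continuous; hence the limit of your recursive expression from off the wall contains no trace of the on-wall $\Tc_{1b}/\Tc_{2b}$ terms, which carry nonzero coefficients. Continuity of your function is therefore \emph{equivalent} to the identity that the on-wall $\Tc_{1b}$ and $\Tc_{2b}$ contributions cancel among themselves; you neither state nor prove this identity, it is not a formal consequence of the inductive hypotheses, and in the paper it is a consequence (not an ingredient) of the already-established continuity combined with Theorem~\ref{th:recursive:1}. (ii) The vanishing on the integer locus, which you acknowledge is open: beyond the integer-locus rationals handled by Lemma~\ref{lm:ki:integer:I:zero}, your density argument again presupposes the continuity from (i). (iii) The identification of the recursively defined function with $\vol_\mu(\CC(\mu))$ at irrational $\mu\in\Lb_n^*$ rests on the asserted continuity of $\mu\mapsto\vol_\mu(\CC(\mu))$, which requires uniform control of the $\mu$-dependent volume forms near the ends of $\Mod_{0,n}$; this is exactly the analytic content that the paper's global curvature-form construction is designed to package, and it cannot simply be cited. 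Your renormalization of the recursion (the cancellation of all powers of $d$ and $e(\Sc)$, giving polynomial coefficients $a_\Sc/d^{c(\Sc)}$) is correct and is indeed used in the paper, but only to prove piecewise polynomiality in Theorem~\ref{th:vol:funct}, after Lemma~\ref{lm:vol:funct:conti} is in place; it cannot substitute for the lemma's proof.
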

\begin{proof}[Sketch of proof]
Consider a point $\xx\in \Mod_{0,n}$ that parametrizes a pointed curve $(\CP^1,x_1,\dots,x_n)$.
For all $\mu \in \Lb_n$,  the metric $\frac{\lambda|dx|^2}{\prod_{i=1}^n|x-x_i|^{2\mu_i}}$, for some $\lambda\in \R_{>0}$, defines a flat surface $M$ in $\CC(\mu)$. Up to scalar multiplication, the coordinates of the vector $Z$ above are given by the integrals of the multivalued $1$-form $\frac{dx}{\prod_{i=1}^n(x-x_i)^{\mu_i}}$ along paths with endpoints in the set $\{x_1,\dots,x_n\}$.
It follows that the correspondence $M \mapsto \C\cdot Z\in \CP^{N_1-1}$ provides us with a continuous map $\Phi: U\times W \to  \CP^{N_1-1}$, where $U$ is a neighborhood of $\xx$ in $\Mod_{0,n}$, and $W$ is a neighborhood of $\mu$ in $\Lb_n$. For any fixed $\mu$, $\Phi_\mu:=\Phi(.,\mu): U \to \CP^{N_1-1}$ maps $U$ onto an open subset of $\Pb V_\mu^+$.

Let $\CP^{N_1-1}_+$ be the subset of $\CP^{N_1-1}$ consisting of lines $\C\cdot Z$ such that ${}^t\ol{Z}\cdot H\cdot Z >0$.
By definition,  $H$ provides us with a Hermitian metric on the tautological line bundle over $\CP^{N_1-1}_+$.
Let $\Theta^\#$ be the curvature form of this metric.
Note that for all $\mu \in W$, $\Phi_\mu(U) \subset \CP^{N_1-1}_+$.
Let $d\widetilde{\vol}_\mu$ denote the pullback of $\left(\frac{\imath}{2\pi}\cdot\Theta^\#\right)^{n-3}$ by $\Phi_\mu$. We then define
$$
\AA_n(\mu):=\int_{\Mod_{0,n}}d\widetilde{\vol}_\mu.
$$
Be construction, $\AA_n: \Lb_n \to \R$ is a continuous function.
Now, it follows from \cite[Lem. 6.1]{Ng22}  that if $\mu \in \Lb^*_n$, then
$$
d\vol_\mu=\frac{(-1)^{n-3}\pi^{n-2}}{(n-2)!}d\widetilde{\vol}_\mu,
$$
and
$$
\vol_\mu(\CC(\mu))=\int_{\Mod_{0,n}}d\vol_\mu=\frac{(-1)^{n-3}\pi^{n-2}}{(n-2)!}\int_{\Mod_{0,n}}d\widetilde{\vol}_\mu= \frac{(-1)^{n-3}\pi^{n-2}}{(n-2)!}\AA_n(\mu).
$$
If $\mu \in \Lb_n\setminus\Lb^*_n$, that is $\mu_i\in \Z$ for some $i \in  \{1,\dots,n\}$, then the restriction of $H$ to $V_\mu$ is degenerate. This implies that the restriction of $(\Theta^\#)^{n-3}$ to $\Pb(V_\mu)^+$ vanishes identically, and therefore $\AA_n(\mu)=0$.

Suppose now that $\mu \in \Q^n$, and $d$ is the smallest positive integer such that $\kappa:=d\cdot\mu\in \Z^n$. In this case every flat surface $M$ in $\CC(\mu)$ is defined by a $d$-differential form $(\CP^1,x_1,\dots,x_n,q) \in \stratesp$.
Assume that $\mu \in \Lb^*_n$ then $d\widetilde{\vol}_\mu$ satisfies $d\widetilde{\vol}_\mu = \frac{1}{d^{n-3}}\left(\frac{\imath}{2\pi}\Theta\right)^{n-3}$, where $\Theta$ is the curvature form of the Hodge norm on $\OO(-1)_{\Pb\Omega^d\Mod_{0,n}(\kappa)}$.
It then follows from the results of \cite{CMZ19} and \cite{Ng22} that
$$
\AA_n(\mu)= \int_{\Mod_{0,n}}d\widetilde{\vol}_\mu  = \frac{1}{d^{n-3}}\cdot\int_{\Pb\Omega^d\Mod_{0,n}(\kappa)}(\frac{\imath}{2\pi}\cdot\Theta)^{n-3}
         = \frac{1}{d^{n-3}}\cdot \big(c_1^{n-3}(\hat{\Lc}_\mu)\cdot[\blowupsp]\big)
         =\frac{1}{d^{n-3}}\cdot\Jc_n(\mu).
$$

\end{proof}

\subsection{Proof of Theorem~\ref{th:vol:funct}}\label{subsec:vol:funct:prf}
\begin{proof}
The function $\AA_n$ we look for is defined in Lemma~\ref{lm:vol:funct:conti}.
We already know that $\AA_n$ satisfies (b) and (c).
It remains to show that $\AA_n$ is piecewise polynomial with rational coefficients.

Recall that $\Pc$ is the set of partitions $\Sc=\{I_0,I_1\}$ of $\{1,\dots,n\}$ such that $\min\{|I_0|,|I_1|\}\geq 2$.
For each $\Sc\in \Pc$, let $\Wb_\Sc$ denote the intersection of $\Lb_n$ with the hyperplane defined by the equation $\mu(I_0)-\mu(I_1)=0$.
Define $\Wb:=\bigcup_{\Sc \in \Pc}\Wb_\Sc$.
Let $\Ub$ be a connected component  of $\Lb_n\setminus\Wb$.
By definition, the sign of $\mu(I_1)-\mu(I_0)$ is constant for all $\{I_0,I_1\}\in \Pc$ as $\mu$ varies in $\Ub$.
We will call $\Ub$ a {\em sign domain} of $\Lb_n$.

Clearly the closures of sign domains give a decomposition of $\Lb_n$ into cells.
We claim that the restriction of $\AA$ to each sign domain $\Ub$ is given by a polynomial with rational coefficients of degree at most $n-3$.

For $n=3$, we have $\AA_3(\mu)=1$ for all $\mu \in \Lb_3$, and $\Pc=\vide$. Thus our claim trivially holds.

\medskip

For $n=4$, $\Pc$ contains only 3 partitions. If $\Sc=\{\{i,j\},\{k,\ell\}\}\in \Pc$, then using the condition that $\mu_1+\dots+\mu_4=2$, we can write
$$
\mu_\Sc=\frac{1}{2}|\mu_i+\mu_j-(\mu_k+\mu_\ell)|.
$$
Since $ \AA_4(\mu)=\mathrm{deg}\hat{\Dc}_\mu/d=\mathrm{deg}\Dc_\mu/d$,  we have
\begin{align*}
  \AA_4(\mu) &=\frac{1}{2\cdot 3}\sum_{\Sc\in \Pc}(1-3\mu_\Sc) \\
   &= \frac{1}{2}-\frac{1}{4}\left(|\mu_1+\mu_2-\mu_3-\mu_4|+|\mu_1+\mu_3-\mu_2-\mu_4|+|\mu_1+\mu_4-\mu_2-\mu_3|\right).
\end{align*}
It is straightforward to check that $\AA_4$ satisfies the claim.

\medskip

We now consider a sign domain $\Ub$ in $\Lb_n$ for some $n\geq 5$.
Given $\mu\in \Lb_n$, we define the subsets $\Tc_{x}(\mu)$ with $x \in \{1a,1b, 2a, 2b\}$ as in \ref{sec:state:results}.
\begin{Claim}\label{clm:sign:dom:adm:parti}
For all $\mu \in \Ub$, $\Tc_{1b}(\mu)=\Tc_{2b}(\mu)=\vide$.
\end{Claim}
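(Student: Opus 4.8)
The plan is to exploit the fact that the defining conditions of both $\Tc_{1b}(\mu)$ and $\Tc_{2b}(\mu)$ secretly encode a partition of $\{1,\dots,n\}$ into two blocks, each of size at least $2$, on which $\mu$ sums to exactly $1$; such a configuration is precisely what defines a wall. Recall that on $\Lb_n$ one has $\mu(I_0)+\mu(I_1)=2$ for every partition $\{I_0,I_1\}$, so the wall $\Wb_\Sc$ attached to $\Sc=\{I_0,I_1\}\in\Pc$, defined by $\mu(I_0)=\mu(I_1)$, is exactly the locus where $\mu(I_0)=\mu(I_1)=1$. Since the sign domain $\Ub$ is a connected component of $\Lb_n\setminus\Wb$, no $\mu\in\Ub$ can lie on any such wall. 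Hence it suffices, for each hypothetical element of $\Tc_{1b}(\mu)$ or $\Tc_{2b}(\mu)$, to produce a partition $\Sc\in\Pc$ with $\mu\in\Wb_\Sc$, which yields the desired contradiction.

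For $\Tc_{1b}(\mu)$, I would take a partition $\{I_{00},I_{01},I_1\}\in\Tc_{1b}(\mu)$ and consider $\Sc=\{I_{00},I_{00}^c\}$, where $I_{00}^c=I_{01}\cup I_1$. By definition $|I_{00}|=2$ and $\mu(I_{00})=1$; since $n\geq 5$ we have $|I_{00}^c|=n-2\geq 3$, so both blocks have size at least $2$ and $\Sc\in\Pc$. As $\mu(I_{00})=1$ forces $\mu(I_{00}^c)=1$, we obtain $\mu\in\Wb_\Sc$, contradicting $\mu\in\Ub$. Thus $\Tc_{1b}(\mu)=\vide$.

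For $\Tc_{2b}(\mu)$, given $\{I_{01},I_{02},I_1,I_2\}\in\Tc_{2b}(\mu)$, I would set $\hat{I}_1:=I_{01}\cup I_1$ and $\hat{I}_2:=I_{02}\cup I_2$, and take $\Sc=\{\hat{I}_1,\hat{I}_2\}$. The relations $\mu(I_1)+\mu(I_{01})=\mu(I_2)+\mu(I_{02})=1$ give $\mu(\hat{I}_1)=\mu(\hat{I}_2)=1$. The only point requiring care is the size condition: since $\mu(I_j)>1$ while every entry satisfies $\mu_i<1$, the block $I_j$ must contain at least two indices, so $|\hat{I}_j|\geq 3$ for $j=1,2$; hence $\Sc\in\Pc$. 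Again $\mu\in\Wb_\Sc$, contradicting $\mu\in\Ub$, and so $\Tc_{2b}(\mu)=\vide$.

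The argument is elementary and presents no serious obstacle; the only subtlety is verifying that the partitions built from elements of $\Tc_{1b}$ and $\Tc_{2b}$ genuinely land in $\Pc$, which is exactly where the hypothesis $n\geq 5$ (for $\Tc_{1b}$) and the strict inequalities $\mu_i<1$ together with $\mu(I_j)>1$ (for $\Tc_{2b}$) are invoked to guarantee that both blocks of the derived partition have size at least $2$.
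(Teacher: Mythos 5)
Your proof is correct and follows essentially the same route as the paper: in both cases one merges blocks to produce a two-block partition $\Sc\in\Pc$ on which $\mu$ sums to $1$ on each side, so that $\mu$ lies on the wall $\Wb_\Sc$, contradicting $\mu\in\Ub$. The paper's version is terser (it treats $\Tc_{1b}$ and declares $\Tc_{2b}$ "similar"), while you spell out the size checks ensuring the derived partitions actually lie in $\Pc$; these checks are correct.
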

\begin{proof}
If $\{I_{00},I_{01},I_1\}$ is a partition in $\Tc_{1b}(\mu)$, then $\{I_{00},I_{01}\cup I_1\}$ is a partition in $\Pc$, and we have
$$
\mu(I_{00})-\mu(I_{01}\cup I_1)=0.
$$
But by definition, we must have $\mu(J_1)-\mu(J_0)\neq 0$ for all $\{J_0,J_1\} \in \Pc$.
Thus, $\Tc_{1b}(\mu)=\vide$. The proof of $\Tc_{2b}(\mu)=\vide$ follows from similar arguments.
\end{proof}

\begin{Claim}\label{clm:dom:same:partitions}
For all $\mu, \mu' \in \Ub$, we have $\Tc_x(\mu)=\Tc_x(\mu')$, for  $x\in \{1a,2a\}$.
\end{Claim}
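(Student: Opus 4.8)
The plan is to separate the defining conditions of ``$\Sc\in\Tc_x(\mu)$'' into three independent types and to check each for stability as $\mu$ ranges over the sign domain $\Ub$. Fixing a partition $\Sc$, its membership in $\Tc_{1a}(\mu)$ or $\Tc_{2a}(\mu)$ is controlled by (i) the block sizes $|I_j|$, which are purely combinatorial and independent of $\mu$; (ii) strict inequalities comparing certain subset sums $\mu(J)$ with $1$; and (iii) the non-integrality requirements $\mu(I_j)\notin\Z$. Type (i) is automatic once $\Sc$ is fixed, so the real work lies in types (ii) and (iii).

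For (ii) the main point is that $\sum_{i=1}^n\mu_i=2$ gives $\mu(J)-1=\frac{1}{2}\big(\mu(J)-\mu(J^c)\big)$ for every $J\subset\{1,\dots,n\}$; hence the sign of $\mu(J)-1$ agrees with the sign of $\mu(J)-\mu(J^c)$, which by the very definition of $\Ub$ is constant on $\Ub$ as soon as $\{J,J^c\}\in\Pc$. I would then verify that all subsets occurring in the sign conditions do yield partitions in $\Pc$. For $\Sc=\{I_0,I_1\}\in\Tc_{1a}$ one has $|I_0|=2$ and $|I_1|=n-2\ge 3$ (as $n\ge 5$), so $\{I_0,I_1\}\in\Pc$ and the condition $\mu(I_0)<1<\mu(I_1)$ is a single sign condition, constant on $\Ub$. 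For $\Sc=\{I_0,I_1,I_2\}\in\Tc_{2a}$, each condition $\mu(I_j)>1$ forces $|I_j|\ge 2$, so with $I_j^c=I_0\cup I_{3-j}$ of size $\ge 3$ we get $\{I_j,I_j^c\}\in\Pc$ and both conditions are constant on $\Ub$; moreover the remaining condition $\mu(I_0)<0$ is not independent, since $\mu(I_0)=2-\mu(I_1)-\mu(I_2)<0$ is implied by $\mu(I_1),\mu(I_2)>1$.

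The delicate point, and the step I expect to be the main obstacle, is type (iii). A condition $\mu(I_j)\notin\Z$ excludes a hyperplane $\{\mu(I_j)=m\}$, $m\in\Z$, which is parallel to the wall $\{\mu(I_j)=1\}$ but is not itself one of the walls defining $\Wb$, so a priori it can meet the interior of $\Ub$. I would dispose of this in two steps. First, whenever a block $I_j$ has size $2$ the bound $\mu(I_j)<2$ (each weight being $<1$) together with $\mu(I_j)>1$ already forces $\mu(I_j)\in(1,2)$, so non-integrality is automatic; in particular this settles $\Tc_{2a}$ entirely for $n=5$. In general I would restrict to the dense open subset $\Ub^\circ\subset\Ub$ obtained by deleting the finitely many integrality hyperplanes: on $\Ub^\circ$ every relevant subset sum is non-integral, so $\Tc_{1a}$ and $\Tc_{2a}$ coincide with the fixed, sign-determined families produced in (ii) and the claim holds verbatim. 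Finally, the only partitions lost on $\Ub\setminus\Ub^\circ$ are those for which some relevant subset sum is an integer; for each such partition the vector $\nu(\Sc)$ (resp.\ $\nu_j(\Sc)$) acquires the integral first coordinate $2-\mu(I_j)$, whence by Lemma~\ref{lm:ki:integer:I:zero} the corresponding factor in the recursive formula \eqref{eq:recursive:1} vanishes. Thus one may sum \eqref{eq:recursive:1} over the fixed sign-determined index sets on all of $\Ub$ without changing its value, which together with the continuity of $\AA_n$ (Lemma~\ref{lm:vol:funct:conti}) is exactly what the ensuing polynomiality argument requires.
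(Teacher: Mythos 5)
Your proposal is correct, and its core coincides with the paper's proof: there too, membership in $\Tc_{1a}$ and $\Tc_{2a}$ is reduced to sign conditions attached to partitions in $\Pc$ ($\mu(I_0)<1<\mu(I_1)$ becomes $\mu(I_1)-\mu(I_0)>0$, and $\mu(I_j)>1$ becomes $\mu(I_j)>\mu(I_0\cup I_{3-j})$ for the two $\Pc$-partitions $\{I_1,I_0\cup I_2\}$ and $\{I_2,I_0\cup I_1\}$, the condition $\mu(I_0)<0$ being implied), and these signs are constant on $\Ub$ by definition. Where you genuinely depart from the paper is your point (iii): the paper's proof is completely silent about the non-integrality requirements $\mu(I_0)\notin\Z$ (resp. $\mu(I_j)\notin\Z$) in the definitions of $\Tc_{1a}$ and $\Tc_{2a}$, and you are right that this is the delicate spot, not a pedantic one. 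The integrality hyperplanes are not among the walls $\Wb_\Sc$ (those only encode $\mu(I_0)=\mu(I_1)=1$), so they can cut through the interior of a sign domain; for instance with $n=5$, $\mu=(1/2,-1/2,9/10,9/10,1/5)$ and a small perturbation of it lie in the same sign domain, yet the partition $\{\{1,2\},\{3,4,5\}\}$ belongs to $\Tc_{1a}$ only for the perturbation. Thus the claim as literally stated fails on these slices and the paper's proof has a (small, harmless) gap. Your repair is the correct one: non-integrality is automatic for size-two blocks pinched in $(1,2)$ (settling $\Tc_{2a}$ for $n=5$), and in general one works on the dense subset $\Ub^\circ$ where the claim holds verbatim, noting that the partitions lost on the integrality slices produce weight vectors with an integral entry, so their factors $\Jc_{n-1}(\nu(\Sc))$ or $\Jc_{n_j+1}(\nu_j(\Sc))$ vanish by Lemma~\ref{lm:ki:integer:I:zero} and the sums in \eqref{eq:recursive:1} and \eqref{eq:recursive:vol} may be taken over the fixed sign-determined index sets for every $\mu\in\Q^n\cap\Ub$. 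That is exactly what the polynomiality argument for Theorem~\ref{th:vol:funct} requires, so your version buys a complete justification of how the claim is actually used, at the price of conceding that the literal statement only holds off the integrality hyperplanes.
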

\begin{proof}
Consider a partition $\Sc=\{I_0,I_1\}\in \Tc_{1a}(\mu)$. We have $\mu(I_1) > 1 > \mu(I_0)$, or equivalently $\mu(I_1)-\mu(I_0)>0$ (since $\mu(I_1)+\mu(I_0)=2$).  By the definition of $\Ub$ we also have $\mu'(I_1)-\mu'(I_0)>0$, which means that $\Sc\in \Tc_{1a}(\mu')$ as well.

Consider now a partition $\Sc=\{I_{0},I_{1}, I_2\} \in \Tc_{2a}(\mu)$. We have
$$
\mu(I_1)>\mu(I_0)+\mu(I_2)=\mu(I_0\cup I_2) \quad \text{ and } \quad \mu(I_2) > \mu(I_0)+\mu(I_1)=\mu(I_0\cup I_1).
$$
Since $\Sc_1:=\{I_1, I_0\cup I_2\}$ and $\Sc_2:=\{I_2, I_0\cup I_1\}$ are elements of $\Pc$, we also have
$$
\mu'(I_1)>\mu'(I_0)+\mu'(I_2) \quad \text{ and } \quad \mu'(I_2) > \mu'(I_0)+\mu'(I_1),
$$
which implies that $\Sc\in \Tc_{2a}(\mu')$.
\end{proof}

\begin{Claim}\label{clm:sign:dom:induct}
Let $\mu$ and $\mu'$ are two vectors in the sign domain $\Ub$.
For all $\Sc\in \Tc_{1a}(\mu)=\Tc_{1a}(\mu')$, let $\nu(\Sc)$ (resp. $\nu'(\Sc)$) be the  weight vector associated to $\mu$ (resp. to $\mu'$) defined in \ref{sec:state:results}. Then $\nu(\Sc)$ and $\nu'(\Sc)$ belongs to the same sign domain in $\Lb_{n-1}$.

Similarly, for all $\Sc\in \Tc_{2a}(\mu)$,  denote by $\nu_1(\Sc),\nu_2(\Sc)$ (resp. $\nu'_1(\Sc), \nu'_2(\Sc)$ the weight vectors associated to $\Sc$ that are constructed from $\mu$ (resp. from $\mu'$) as defined in \ref{sec:state:results}. Then $\nu_i(\Sc)$ and $\nu'_i(\Sc)$ belong to the same sign domain in $\Lb_{n_i+1}$, $i=1,2$.
\end{Claim}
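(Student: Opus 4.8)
The plan is to reduce the statement to a comparison of \emph{sign patterns} and then to transport such patterns from $\Lb_n$ to the smaller spaces $\Lb_{n-1}$ and $\Lb_{n_i+1}$ via a merging/un-merging of indices. First I would record a clean description of sign domains valid for any $m\geq 3$. For a partition $\Tb=\{J_0,J_1\}$ of the index set with $\min\{|J_0|,|J_1|\}\geq 2$, set $f_\Tb(w):=w(J_1)-w(J_0)$, a linear functional on $\Lb_m$, whose zero locus is the hyperplane $\Wb_\Tb$. Since $\Lb_m$ is a relatively open convex subset of an affine subspace and $\Wb$ is a finite union of hyperplane sections, the connected components of $\Lb_m\setminus\Wb$ are exactly the chambers of this arrangement: each is the convex (hence connected) locus on which every $f_\Tb$ has a prescribed nonzero sign. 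Consequently, two vectors of $\Lb_m$ lie in the same sign domain if and only if they avoid every $\Wb_\Tb$ and lie strictly on the same side of each of them. Thus it suffices to prove, for every relevant $\Tb$, that $f_\Tb(\nu(\Sc))$ and $f_\Tb(\nu'(\Sc))$ are nonzero with the same sign (and similarly for $\nu_j,\nu'_j$).

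Next I would realize the restricted weight vectors as merges of $\mu$. For $\Sc=\{I_0,I_1\}\in\Tc_{1a}(\mu)$ (where $|I_0|=2$), write $\rho:=\nu(\Sc)$; the identity $2-\mu(I_1)=\mu(I_0)$ shows that $\rho$ is obtained from $\mu$ by replacing the two indices of $I_0$ by a single distinguished index $\star$ of weight $\mu(I_0)$, the indices of $I_1$ keeping their weights. Likewise, for $\Sc=\{I_0,I_1,I_2\}\in\Tc_{2a}(\mu)$ and $j\in\{1,2\}$, write $\rho_j:=\nu_j(\Sc)$; the identity $2-\mu(I_j)=\mu(\{1,\dots,n\}\setminus I_j)$ shows $\rho_j$ is obtained from $\mu$ by keeping the indices of $I_j$ and collapsing their complement into a single distinguished index $\star$ of weight $2-\mu(I_j)$. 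In either case, for a subset $J$ of the small index set let $\hat J$ denote its \emph{un-merge}: replace $\star$ (if present in $J$) by the full set of indices it stands for, and leave the remaining indices untouched. By construction $\rho(J)=\mu(\hat J)$ (resp. $\rho_j(J)=\mu(\hat J)$), so for $\Tb=\{J_0,J_1\}$ one gets $f_\Tb(\rho)=f_{\hat\Tb}(\mu)$, where $\hat\Tb=\{\hat J_0,\hat J_1\}$ is a partition of $\{1,\dots,n\}$.

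The crux of the argument, and the step I expect to require the most care, is to verify that $\hat\Tb$ always lies in $\Pc$, i.e. that both $\hat J_0$ and $\hat J_1$ have at least two elements, so that $f_{\hat\Tb}(\mu)$ is genuinely governed by the sign domain $\Ub\subset\Lb_n$. For $\Tc_{1a}$ the part of $\hat\Tb$ not containing $\star$ is unchanged and already has $\geq 2$ elements, while the part containing $\star$ gains one index (since $|I_0|=2$), hence has $\geq 3$. For $\Tc_{2a}$ I would first note that $|I_j|\geq 2$, which follows from $\mu(I_j)>1$ together with $\mu_i<1$ for all $i$; then the part containing $\star$ un-merges to contain $I_0\cup I_{3-j}$, of cardinality $n_{3-j}+1\geq 3$, and the other part is unchanged with $\geq 2$ elements. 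Hence $\hat\Tb\in\Pc$ in all cases.

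Finally I would conclude. Because $\mu,\mu'\in\Ub$ lie strictly on the same side of every hyperplane $\Wb_{\hat\Tb}$ with $\hat\Tb\in\Pc$, the values $f_\Tb(\rho)=f_{\hat\Tb}(\mu)$ and $f_\Tb(\rho')=f_{\hat\Tb}(\mu')$ are nonzero and of the same sign; in particular each $\rho,\rho'$ avoids every $\Wb_\Tb$ and so lies in a genuine sign domain. As this holds for every admissible $\Tb$, the characterization from the first paragraph shows that $\nu(\Sc)$ and $\nu'(\Sc)$ lie in the same sign domain of $\Lb_{n-1}$, and likewise $\nu_j(\Sc)$ and $\nu'_j(\Sc)$ lie in the same sign domain of $\Lb_{n_j+1}$, which is exactly the asserted statement. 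The only genuine subtlety beyond bookkeeping is the cardinality check of the previous paragraph, which is precisely where the structural constraints $|I_0|=2$ (resp. $\mu(I_j)>1$) are used.
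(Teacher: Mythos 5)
Your proof is correct and follows essentially the same route as the paper: realize $\nu(\Sc)$ (resp. $\nu_j(\Sc)$) as a merge of $\mu$, and transport each sign condition on the smaller space back to a partition in $\Pc$ by un-merging the distinguished index, so that the sign pattern of the merged vector is inherited from that of $\mu$. If anything, your writeup is more complete than the paper's, which proves only the $\Tc_{1a}$ case and leaves the chamber characterization (same sign pattern implies same component, via convexity) and the $\Tc_{2a}$ cardinality checks implicit.
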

\begin{proof}
We will only give the proof for the case $\Sc\in \Tc_{1a}(\mu)$.
Given $\Sc\in \Tc_{1a}(\mu)$, for simplicity we will write $\nu$ and $\nu'$ instead of $\nu(\Sc)$ and $\nu'(\Sc)$.
We can renumber the entries of $\mu$ and $\nu$ such that $\nu_i=\mu_i$, for $i=1,\dots,n-2$, and $\nu_{n-1}=\mu_{n-1}+\mu_n$.
Consider now a partition $\{J_0,J_1\}$ of $\{1,\dots,n-1\}$ such that $\min\{|J_0|,|J_1|\}\geq 2$.
Assume that $n-1\in J_1$. Define $\tilde{J}_0:=J_0$ and $\tilde{J}_1:=J_1\cup\{n\}$. Then $\{\tilde{J}_0,\tilde{J}_1\}$ is a partition of $\{1,\dots,n\}$ which belongs to $\Pc$. By definition, we have $\nu(J_i)=\mu(\tilde{J}_i)$ and $\nu'(J_i)=\mu'(\tilde{J}_i), \; i=0,1$. In particular
$$
\nu(J_1)-\nu(J_0)=\mu(\tilde{J}_1)-\mu(\tilde{J}_0) \quad \text{ and } \nu'(J_1)-\nu'(J_0)=\mu'(\tilde{J}_1)-\mu'(\tilde{J}_0).
$$
Since $\mu(\tilde{J}_1)-\mu(\tilde{J}_0)$ and $\mu'(\tilde{J}_1)-\mu'(\tilde{J}_0)$ have the same sign, so do $\nu(J_1)-\nu(J_0)$ and $\nu'(J_1)-\nu'(J_0)$. This means that $\nu$ and $\nu'$ belong to the same sign domain in $\Lb_{n-1}$.
\end{proof}

We can now conclude the proof by induction on $n$.
Define
$$
a^*_\Sc(\mu):=\left\{
\begin{array}{cl}
a_\Sc(\mu)/d & \text{ for all } \Sc \in \Tc_{1a}(\mu), \\
a_\Sc(\mu)/d^2 & \text{ for all } \Sc \in \Tc_{2a}(\mu).
\end{array}
\right.
$$
Note that $a^*_\Sc$ is a polynomial with rational coefficient in the variables $(\mu_1,\dots,\mu_n)$, which has degree $1$ if $\Sc\in \Tc_{1a}(\mu)$ and degree $2$ if $\Sc\in \Tc_{2a}(\mu)$.
For all $\mu\in \Q^n\cap \Ub$, since $\Tc_{1b}(\mu)=\Tc_{2b}(\mu)=\vide$, the recursive formula \eqref{eq:recursive:1} can be rewritten as
\begin{align}
\label{eq:recursive:vol} \AA_n(\mu) &= \sum_{\Sc\in\Tc_{1a}(\mu)}a^*_\Sc(\mu)\AA_{n-1}(\nu(\Sc)) -\sum_{\Sc\in \Tc_{2a}(\mu)}a^*_\Sc(\mu)\AA_{n_1(\Sc)+1}(\nu_1(\Sc))\AA_{n_2(\Sc)+1}(\nu_2(\Sc)).
\end{align}
Claim~\ref{clm:dom:same:partitions} implies that the  expression on the right hand side of \eqref{eq:recursive:vol} is the same for all $\mu\in \Ub\cap\Q^n$.
The induction hypothesis and Claim \ref{clm:sign:dom:induct} then imply that each summand on the  right hand side of \eqref{eq:recursive:vol} is given by a polynomial function on $\Ub$ with rational coefficients and degree at most $n-3$. By the continuity of $\AA_n$ (c.f. Lemma~\ref{lm:vol:funct:conti}) we get the desired conclusion.
\end{proof}

\appendix

\section{A result on symmetric polynomials}\label{sec:sym:poly:append}
\begin{Theorem}\label{th:sym:poly}
Let $a$ be a real number. For all $k\in \Z_{>0}$ define
$$
P_{k,a}(X):=\prod_{i=1}^k(X+a+i).
$$
By convention, we set $P_{0,a}(X):=1$ for all $a\in \R$.
Given $a,b\in \R$, and $n \in \Z, n \geq 2$, define
\begin{eqnarray*}
F_{n,a,b}(X_1,\dots,X_n) &:= & \sum_{\substack{I \subset \{1,\dots,n\},\\ I\neq \varnothing, I \neq \{1,\dots,n\}}} P_{|I|-1,a}(\sum_{i\in I}X_i)\cdot P_{|I^c|-1,b}(\sum_{i\in I^c}X_i)\\
& = & \sum_{\substack{I \subset \{1,\dots,n\},\\ I\neq \varnothing, I \neq \{1,\dots,n\}}} \left((\sum_{i\in I}X_i+a+1)\dots(\sum_{i\in I}X_i+a+|I|-1)\cdot \right.\\
&  & \left. \cdot (\sum_{i\in I^c}X_i+b+1)\dots(\sum_{i\in I^c}X_i+b+|I^c|-1)\right).
\end{eqnarray*}
Then $F_{n,a,b}(X_1,\dots,X_n)$ depends only on the sum $X_1+\dots+X_n$, that is $F_{n,a,b}(X_1,\dots,X_n)\in \R[\sum_{i=1}^nX_i]$.
\end{Theorem}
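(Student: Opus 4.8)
The plan is first to record that $F:=F_{n,a,b}$ is a symmetric polynomial in $X_1,\dots,X_n$: any permutation of the variables merely permutes the index subsets $I$ in the defining sum, leaving each summand intact. A symmetric polynomial lies in $\R[e_1]=\R[X_1+\dots+X_n]$ precisely when it is constant on the hyperplanes $\{\sum_i X_i = c\}$, which in turn is equivalent to $\partial F/\partial X_i=\partial F/\partial X_j$ for all $i,j$. Because $F$ is symmetric, the single identity $\partial F/\partial X_1=\partial F/\partial X_2$ already forces all of these (apply a variable permutation sending $\{1,2\}$ to $\{i,j\}$ and use $\sigma\cdot F=F$). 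So, writing $S=X_1+\dots+X_n$, it suffices to prove
\[
\frac{\partial F}{\partial X_1}=\frac{\partial F}{\partial X_2}.
\]

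\textbf{A generating-function encoding.} Next I would rewrite $F$ through generating functions that decouple the two factors. Using the rising-factorial identity $(c)_k=k!\,[t^k](1-t)^{-c}$ for $(s_I+a+1)_{|I|-1}$ and $(s_{I^c}+b+1)_{|I^c|-1}$, together with the elementary-symmetric expansion $\sum_{|I|=m}\prod_{i\in I}\alpha_i\prod_{i\notin I}\beta_i=[\xi^m]\prod_{i=1}^n(\beta_i+\alpha_i\xi)$ where $\alpha_i=(1-y)^{-X_i}$ and $\beta_i=(1-z)^{-X_i}$, one obtains the compact formula
\[
F_{n,a,b}=\sum_{m=1}^{n-1}(m-1)!\,(n-1-m)!\,\bigl[y^{m-1}z^{\,n-1-m}\xi^{m}\bigr]\,\mathcal G,\qquad
\mathcal G=(1-y)^{-a-1}(1-z)^{-b-1}\prod_{i=1}^{n}(\beta_i+\alpha_i\xi).
\]
The entire individual-variable dependence now sits inside the product $\prod_i(\beta_i+\alpha_i\xi)$, and the target identity becomes the statement that the same weighted coefficient of $\partial_{X_1}\mathcal G-\partial_{X_2}\mathcal G$ vanishes.

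\textbf{The differentiated generating function and the main obstacle.} I would then differentiate $\mathcal G$. Since only the factors indexed by $1$ and $2$ depend on $X_1,X_2$, a direct computation (using $\partial_{X_i}\alpha_i=-\log(1-y)\,\alpha_i$ and $\partial_{X_i}\beta_i=-\log(1-z)\,\beta_i$) collapses to the clean factorization
\[
\partial_{X_1}\mathcal G-\partial_{X_2}\mathcal G=(1-y)^{-a-1}(1-z)^{-b-1}\,\xi\,\bigl(\log(1-z)-\log(1-y)\bigr)\bigl(\alpha_1\beta_2-\alpha_2\beta_1\bigr)\prod_{i=3}^{n}(\beta_i+\alpha_i\xi).
\]
The antisymmetric prefactor $(\alpha_1\beta_2-\alpha_2\beta_1)$ is exactly what one expects for the derivative of a symmetric function, and the residual product runs only over the $n-2$ indices $3,\dots,n$; this is the structural feature I would exploit. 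The remaining and genuinely delicate step—the main obstacle—is to show that the factorially weighted bidegree-$(m-1,n-1-m)$ coefficient of this expression is identically zero. I expect to close this either by induction on $n$ (the factor $\prod_{i\ge 3}(\beta_i+\alpha_i\xi)$ naturally lowers the number of variables, reducing the vanishing to a smaller instance once the $\{1,2\}$-block is isolated) or by recognizing the resulting sum as a rising-factorial Hurwitz/Hagen–Rothe type convolution identity. The bookkeeping of the weights $(m-1)!\,(n-1-m)!$ against the coefficient extraction is where the real work lies, and checking the base cases $n=2,3,4$ (where the cross-size cancellations, e.g.\ the $e_2$-coefficients $-2,+4,-2$ at $n=4$, sum to zero) will guide the correct formulation of the inductive statement.
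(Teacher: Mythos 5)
Your preparatory steps are all correct: the reduction to $\partial F/\partial X_1=\partial F/\partial X_2$ (valid for any polynomial, with the symmetry of $F$ letting this single identity imply all the others), the generating-function formula for $F_{n,a,b}$, and the factorization
\[
\partial_{X_1}\mathcal G-\partial_{X_2}\mathcal G=(1-y)^{-a-1}(1-z)^{-b-1}\,\xi\,\bigl(\log(1-z)-\log(1-y)\bigr)\bigl(\alpha_1\beta_2-\alpha_2\beta_1\bigr)\prod_{i=3}^{n}(\beta_i+\alpha_i\xi)
\]
all check out. But your proof stops exactly where the content of the theorem begins. The statement you still need,
\[
\sum_{m=1}^{n-1}(m-1)!\,(n-1-m)!\,\bigl[y^{m-1}z^{\,n-1-m}\xi^{m}\bigr]\bigl(\partial_{X_1}\mathcal G-\partial_{X_2}\mathcal G\bigr)=0,
\]
is equivalent to the theorem itself; nothing in the reformulation has made it easier. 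You name two possible closing strategies (an induction on $n$ that is never formulated, and an unspecified Hurwitz/Hagen--Rothe-type convolution identity) but carry out neither, and you yourself flag this as ``where the real work lies.'' The difficulty is genuine: as your own $n=4$ data shows, the three contributions are proportional to $2,-4,2$ times $(X_1-X_2)$, so the vanishing is a cancellation \emph{across} the different sizes $m$, weighted by the factorials $(m-1)!\,(n-1-m)!$; no term-by-term or local argument can produce it, and any induction must explain how those weights recombine when the number of variables drops. As written, the proposal is a correct re-encoding of the problem, not a proof.

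For comparison, the paper's proof avoids this machinery entirely and is complete. It argues by induction on $n$, specializing one variable to $0$: one computes
\[
F_{n+1,a,b}(X_1,\dots,X_n,0)=P_{n-1,a}(S)+P_{n-1,b}(S)+(S+a+b+n)\,F_{n,a,b}(X_1,\dots,X_n),\qquad S=X_1+\dots+X_n,
\]
which lies in $\R[S]$ by the induction hypothesis; then, writing each homogeneous component of $F_{n+1,a,b}$ as $a_d(X_1+\dots+X_{n+1})^d+R_d$ with $R_d$ containing no pure powers $\lambda X_i^d$, the specializations at the various $X_i=0$ force every monomial of $R_d$ to be divisible by $X_1\cdots X_{n+1}$, impossible in degree $d\le n-1$, so $R_d\equiv 0$. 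If you want to salvage your approach, the most promising route to the induction you allude to is this same specialization trick (setting $X_n=0$ turns the factor $\beta_n+\alpha_n\xi$ into $1+\xi$ and should relate your series to the $(n-1)$-variable one), but that step has to be done, not announced.
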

\begin{proof}
Note that $F_{n,a,b}(X_1,\dots,X_n)$ is a symmetric polynomial of degree $n-2$ in $(X_1,\dots,X_n)$.
We have
$$
F_{2,a,b}(X_1,X_2)= 2,
$$
and
$$
F_{3,a,b}(X_1+X_2+X_3)=4(X_1+X_2+X_3)+3(a+b+2).
$$
Assume that the conclusion holds for some $n\geq 3$.
For all $i\in \{1,\dots,n+1\}$,  define
$$
G^{(i)}_{n+1,a,b}(X_1,\dots,\widehat{X}_i,\dots,X_{n+1}):=F_{n+1,a,b}(X_1,\dots,\underset{i}{\underbrace{0}},\dots,X_{n+1}).
$$
Consider
$$
G^{(n+1)}_{n+1,a,b}(X_1,\dots,X_n):=F_{n+1,a,b}(X_1,\dots,X_n,0).
$$
We will show that $G^{(n+1)}_{n+1,a,b}\in \R[\sum_{i=1}^n X_i]$.
Let  $J$ be a subset of $\{1,\dots,n\}$ such that $1 < |J| < n$. Denote by $J^c$ the complement of $J$ in $\{1,\dots,n\}$.
Consider $J$ as a subset of $\{1,\dots,n+1\}$.
The term in $G^{(n+1)}_{n+1,a,b}$ associated to $J$ is
$$
P_{|J|-1,a}(\sum_{i\in J}X_i)\cdot P_{|J^c|,b}(\sum_{i\in J^c}X_i)= (\sum_{i\in J^c}X_i+b+|J^c|)\cdot P_{|J|-1,a}(\sum_{i\in J}X_i)\cdot P_{|J^c|-1,b}(\sum_{i\in J^c}X_i)
$$
Let $\hat{J}=J\sqcup\{n+1\}$. Then the term in $G^{(n+1)}_{n+1,a,b}$ corresponding to $\hat{J}$ is equal to
$$
P_{|J|,a}(\sum_{i\in J}X_i)\cdot P_{|J^c|-1,b}(\sum_{i\in J^c}X_i)= (\sum_{i\in J^c}X_i+a+|J|)\cdot P_{|J|-1,a}(\sum_{i\in J}X_i)\cdot P_{|J^c|-1,b}(\sum_{i\in J^c}X_i)
$$
Thus the sum of the terms in $G^{(n+1)}_{n+1,a,b}$ corresponding to $J$ and $\hat{J}$ is equal to
$$
(\sum_{i=1}^nX_i+a+b+n)\cdot P_{|J|-1,a}(\sum_{i\in J}X_i)\cdot P_{|J^c|-1,b}(\sum_{i\in J^c}X_i)
$$
Consider now  a subset $I$ of $\{1,\dots,n+1\}$ such that $I\neq \vide$ and $I \neq \{1,\dots,n+1\}$.
If $I=\{n+1\}$, then the term corresponding to $I$ in $G^{(n+1)}_{n+1,a,b}$ is
$$
P_{n-1,b}(\sum_{i=1}^n X_i) \in \R[\sum_{i=1}^nX_i],
$$
and if $I=\{1,\dots,n\}$, then the term corresponding to $I$ is
$$
P_{n-1,a}(X_1+\dots+X_n) \in \R[X_1+\dots+X_n].
$$
Otherwise, there is a unique subset $J$ of $\{1,\dots,n\}$ such that $1 < |J| < n$ and either $I=J$ of $I=J\sqcup\{n+1\}$.
Therefore,
$$
G^{(n+1)}_{n+1,a,b}(X_1,\dots,X_n)=P_{n-1,a}(\sum_{i=1}^nX_i)+P_{n-1,b}(\sum_{i=1}^nX_i)+ (\sum_{i=1}^n X_i+a+b+n)\cdot F_{n,a,b}(X_1,\dots,X_n).
$$
By assumption $F_{n,a,b}(X_1,\dots,X_n)\in \R[\sum_{i=1}^nX_i]$, thus we have $G^{(n+1)}_{n+1,a,b}(X_1,\dots,X_n)\in \R[\sum_{i=1}^nX_i]$.
By the same argument, we also have $G^{(j)}_{n+1,a,b} \in \R[\sum_{\substack{1\leq i\leq n+1, i\neq j}}X_i]$
for all $j\in \{1,\dots,n\}$.

For $d\in \{1,\dots,n-1\}$, let $F^{(d)}_{n+1,a,b}$ be the degree $d$ component of $F_{n+1,a,b}$. We can write
$$
F^{(d)}_{n+1,a,b}=a_d(X_1+\dots+X_{n+1})^d+R_d(X_1,\dots,X_{n+1}),
$$
where $R_d$ is a homogeneous symmetric polynomial of degree $d$ that does not contain any term of the form $\lambda\cdot X_i^d$.
We now observe that  $F^{(d)}_{n+1,a,b}(X_1,\dots,X_n,0)$ is the degree $d$ component of $G^{(n+1)}_{n+1,a,b}$.
Since $G^{(n+1)}_{n+1,a,b} \in \R[\sum_{i=1}^nX_i]$, we must have
$$
F^{(d)}_{n+1,a,b}(X_1,\dots,X_n,0)=a_d(X_1+\dots+X_{n})^d
$$
and
$$
R_d(X_1,\dots,X_n,0)=0.
$$
This means that every term of $R_d$ contains $X_{n+1}$.
The same argument applied to $G^{(i)}_{n+1,a,b}$ implies that all the terms of $R_d$ contain $X_1\cdots X_{n+1}$.
But the degree of $R_d$ is $d \leq n-1$. Thus we must have $R_d\equiv 0$. It follows that
$$
F^{(d)}_{n+1,a,b}(X_1,\dots,X_n,X_{n+1})=a_d(X_1+\dots+X_{n+1})^d
$$
and hence $F_{n+1,a,b} \in \R[\sum_{i=1}^{n+1}X_i]$.
\end{proof}


\section{Computations of Masur-Veech volumes in low dimension \\  {\small by Vincent Koziarz and Duc-Manh Nguyen}}\label{sec:low:dim:calcul}
In this section, we compute the self-intersection number $\hat{\Dc}_\mu^{n-3}$, where $\hat{\Dc}_\mu$ is defined in \eqref{eq:tauto:ln:bdl:expr}, and the corresponding Masur-Veech volume of $\Pb\stratesp$, for $n=4,5$,  $d\in \{3,4,6\}$.
It is well known that  the Masur-Veech volume of $\Pb\stratesp$ gives the asymptotics of the number of tilings of the sphere by triangles and squares with prescribed constraints at some vertices (see for instance \cite{Th98, ES18, Engel-I, KN20}).
It is worth noticing that our calculations also show that $\hat{\Dc}_\mu^{n-3}=0$ if there exists $i\in \{1,\dots,n\}$ such that $\mu_i\in \Z_{\leq 0}$, in accordance with Theorem~\ref{th:recursive:1}.

To compute the ratio $\frac{d\vol^{MV}}{d\vol}$, our strategy goes as follows: we first observe that it is enough to compute this ratio locally near any point in $\stratesp$. Recall that an element $(\CP^1,x_1,\dots,x_n,q)\in \stratesp$ defines a flat metric with conical singularities on the sphere, where the cone angle at $x_i$ is  $\alpha_i:=2\pi(1+\frac{k_i}{d})$.
To simplify the discussion, let us assume that there is at most one angle in $\{\alpha_1,\dots,\alpha_n\}$ that is greater than $2\pi$. Thus, we can assume that $0< \alpha_i< 2\pi$ for $i=1,\dots,n-1$. We construct a flat surface in $\stratesp$ as follows: let $P$ be a $2(n-1)$-gon  in the plane whose vertices  are denoted by $p_1,\dots,p_{2(n-1)}$ in the counterclockwise ordering. We suppose that $P$ satisfies the following
\begin{itemize}
\item the sides $\ol{p_{2i-1}p_{2i}}$ and $\ol{p_{2i}p_{2i+1}}$ have the same length, and

\item the interior angle at $p_{2i}$ is $\alpha_i$,
\end{itemize}
for $i=1,\dots,n-1$, (by convention $p_{2n-1}=p_1$).
Gluing $\ol{p_{2i-1}p_{2i}}$ and $\ol{p_{2i}p_{2i+1}}$ together, we obtain a flat surface $M$ homeomorphic to the sphere with $n$ conical singularities $x_1,\dots,x_n$, where $x_i$ corresponds to the vertex $p_{2i}$ of $P$ for $i=1,\dots,n-1$, and $x_n$ is the  identification of $p_1,p_3,\dots,p_{2n-3}$. The cone angle at $x_i$ is clearly $\alpha_i$ for $i=1,\dots,n-1$. Thus the cone angle at $x_n$ must be $\alpha_n$. Hence $M$ is an element of $\stratesp$.

Let $z_i$ be the complex number associated to the vector $\overrightarrow{p_{2i}p_{2i-1}}$ for $i=1,\dots,n-1$. Then the complex number associated to $\overrightarrow{p_{2i}p_{2i+1}}$ is $e^{\imath\alpha_i}z_i$. It follows that the numbers $(z_1,\dots,z_{n-1})$ satisfy
\begin{equation}\label{eq:loc:ch:by:polygon}
(1-e^{\imath\alpha_1})z_1+\dots+(1-e^{\imath\alpha_{n-1}})z_{n-1}=0.
\end{equation}
A neighborhood of $M$ in $\stratesp$ can be identified with an open subset of the hyperplane $V$  defined by \eqref{eq:loc:ch:by:polygon} in $\C^{n-1}$. Since $e^{\imath\alpha_i} \neq 1$ for all $i=1,\dots,n-1$, we can identify this hyperplane with $\C^{n-2}$ via the natural projection $\C^{n-1} \to \C^{n-2}, \; (z_1,\dots,z_{n-1}) \mapsto (z_1,\dots,z_{n-2})$.
In the coordinates $(z_1,\dots,z_{n-2})$, the area of $M$ is given by a Hermitian matrix $H$, that is
$$
\Aa(M)=(\bar{z}_1,\dots,\bar{z}_{n-2})\cdot H \cdot {}^t(z_1,\dots,z_{n-2}).
$$
Hence $d\vol=\det H \cdot d\lambda_{2(n-2)}$, where $\lambda_{2(n-2)}$ is the Lebesgue measure on $\C^{n-2}$.

In this setting, the Masur-Veech volume on $\stratesp$ is the unique (real) $(n-2,n-2)$-form on $V$ such that the lattice $\Lambda:=V\cap (\Z\oplus \zeta\Z)^{n-1}$ has covolume $1$, where $\zeta=\imath$ if $d=4$, and $\zeta=e^{\frac{2\pi\imath}{3}}$ if $d\in \{3,6\}$.  Let $\Lambda'$ be the projection of $\Lambda$ in $\C^{n-2}$. Then $\Lambda'$ is a sublattice of $(\Z\oplus\zeta\Z)^{n-2}$. Let $m$ be the index of $\Lambda'$ in $(\Z\oplus\zeta\Z)^{n-2}$. Since $\Lambda'$ has covolume $m\cdot\imag(\zeta)^{n-2}$ with respect to the Lebesgue measure, we must have
$$
\frac{d\vol^{MV}}{d\lambda_{2(n-2)}}=\frac{1}{m\cdot\imag(\zeta)^{n-2}}.
$$
As a consequence, we get
\begin{equation}\label{eq:form:ratio:of:vols}
\frac{d\vol^{MV}}{d\vol}=\frac{1}{m\cdot \imag(\zeta)^{n-2}\cdot \det H}.
\end{equation}

In the cases where there are more than one angle in $\{\alpha_1,\dots,\alpha_n\}$ that are greater than $2\pi$, we construct a  surface in $\stratesp$ from several polygons, and compute the ratio $d\vol^{MV}/d\vol$ by the same method.

\subsection{Case $n=4$}\label{subsec:cal:inters:n4}
In this case, $\blowupsp$ is isomorphic to $\ol{\Mod}_{0,4}\simeq \CP^1$, and $\hat{\Dcal}_\mu=\Dcal_\mu$. Thus, the self-intersection number is simply the degree of $\Dcal_\mu$.
The values of the ratio $d\vol^{MV}/d\vol$ and the Masur-Veech volume of $\Pb\stratesp$ are recorded in Table~\ref{table:vol:n4} (the values on the last column are the product of the ones on the third and the fourth columns with $-\frac{\pi^2}{2\cdot d}$).

\begin{table}[h!]
\centering

\begin{tabular}{|  p{0.5cm}|  p{3 cm}|  p{2 cm} | p{2 cm} | p{2 cm} |}
\hline
\centering $d$ & \centering $(-k_i)$ & \centering $\frac1{d}\deg(\hat{\cal D}_\mu)$ & \centering $d\vol^{MV}/d\vol$ & \centering M-V volume \tabularnewline
\hline
\end{tabular}
\begin{tabular}{|  p{0.5cm}|  p{3 cm}|  p{2 cm} | p{2 cm} | p{2 cm} |}
\hline
\centering $2$ &$(1,1,1,1)$ & \centering $1/2$ & \centering $-1$  & \centering $\pi^2/8$ \tabularnewline
\hline

\centering $3$ &$(2,2,1,1)$ & \centering $1/3$ & \centering $-16/9 $ & \centering $8\pi^2/81$ \tabularnewline
\hline

\centering $4$ &$(3,2,2,1)$ & \centering $1/4$ & \centering $-1$ & \centering $\pi^2/32$\tabularnewline
\hline
\centering $4$ &$(3,3,1,1)$ & \centering $1/4$ & \centering $-2$ & \centering $\pi^2/16$ \tabularnewline
\hline
\centering $4$ &$(3,3,3,-1)$ & \centering $-1/4$ & \centering $2$& \centering $\pi^2/16$ \tabularnewline
\hline

\centering $6$ &$(4,3,3,2)$ & \centering $1/3$ & \centering  $-4/9$ & \centering $\pi^2/81$ \tabularnewline
\hline
\centering $6$ &$(4,4,3,1)$ & \centering $1/6$ & \centering $-8/9$ & \centering $\pi^2/81$ \tabularnewline
\hline
\centering $6$ &$(5,3,2,2)$ & \centering $1/6$ & \centering $-8/9$ & \centering $\pi^2/81$ \tabularnewline
\hline
\centering $6$ &$(5,3,3,1)$ & \centering $1/6$ & \centering $-4/3$ & \centering $\pi^2/54$ \tabularnewline
\hline
\centering $6$ &$(5,4,2,1)$ & \centering $1/6$ & \centering $-16/9$ & \centering $2\pi^2/81$ \tabularnewline
\hline
\centering $6$ &$(5,4,4,-1)$ & \centering $-1/6$ & \centering $16/9$ & \centering $2\pi^2/81$ \tabularnewline
\hline
\centering $6$ &$(5,5,1,1)$ & \centering $1/6$ & \centering $-16/3$ & \centering $2\pi^2/27$ \tabularnewline
\hline
\centering $6$ &$(5,5,3,-1)$ & \centering $-1/6$ & \centering $8/3$ & \centering $\pi^2/27$ \tabularnewline
\hline
\centering $6$ &$(5,5,4,-2)$ & \centering $-1/3$ & \centering $16/9$ & \centering $4\pi^2/81$ \tabularnewline
\hline
\centering $6$ &$(5,5,5,-3)$ & \centering $-1/2$ & \centering $8/3$ & \centering $\pi^2/9$ \tabularnewline
\hline
\end{tabular}
\caption{Volume of $\Pb\stratesp$ when $n=4$.}
\label{table:vol:n4}
\end{table}

\subsection{Case $n=5$}\label{subsec:cal:inters:n5}
In this case  the exceptional divisor $\Ecal$ is non-trivial if and only if there exists a partition $\{I_0,I_1,I_2\}$ of $\{1,\dots,5\}$ such that $\sum_{i\in I_1} \mu_i>1$ and $\sum_{i\in I_2}\mu_i>1$ (see \cite[Th. 1.2(a)]{Ng23:a}).
This implies that $|I_0|=1$ and $|I_1|=|I_2|=2$.
Thus the stratum of $\ol{\Mod}_{0,5}$ corresponding to this partition is just one point.
Let $E$ be the  Weil divisor associated to the Cartier divisor $\Ecal$.
Then each component of $E$ maps to a single point in $\ol{\Mod}_{0,5}$.
Therefore, we have
$$
\hat{p}^*\Dcal_\mu\cdot E= \Dc_\mu\cdot \hat{p}_* E=0.
$$
Hence
\begin{equation}\label{eq:self:inters:n5}
(\hat p^*\Dcal_\mu+\Ecal)^2=\Dcal^2_\mu+\Ecal^2.
\end{equation}
Since we have an explicit expression of $\Dcal_\mu$ and the intersection of boundary divisors in $\ol{\Mod}_{0,n}$ is well understood, the computation of $\Dcal_\mu^2$ is straightforward.
It remains to compute $\Ecal^2$.

To simplify the discussion, let us assume that $E$ has  a unique irreducible component associated to a partition $\{I_0,I_1,I_2\}$ as above.
Define $r_i:=d(\sum_{k\in I_i} \mu_k-1), \; i=1,2$.
By assumption, we have $r_i \in \Z_{>0}$.

By construction, a neighborhood of $E$ in $\widehat{\Mod}_{0,5}(\kappa)$ can be described as $\widehat{\Ucal}=\{(t_1,t_2,[u:v]) \in \Ucal\times\Pb^1, \; vt^{r_1}_1=ut_2^{r_2}\}$, where $\Ucal$ is a neighborhood of $(0,0)$ in $\C^2$.
Define $U=\{(t_1,t_2,[u:v]) \in \widehat{\Ucal}, \; v \neq 0\}$ and $V=\{(t_1,t_2,[u:v])\in \widehat{\Ucal}, \; u\neq 0\}$.
We have
$$
U\simeq \{(t_1,t_2,u) \in \Ucal\times \C, \; t_1^{r_1}=ut_2^{r_2}\} \text{ and } V\simeq \{(t_1,t_2,v) \in \Ucal\times \C, \; vt_1^{r_1}=t_2^{r_2}\}.
$$
Note that $E\cap U$ and $E\cap V$ are both defined by $t_1=t_2=0$.
\begin{Lemma}\label{lm:inters:Weil:Cart:div}
We have $\Ecal\cdot E=-1$.
\end{Lemma}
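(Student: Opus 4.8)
The plan is to identify $\Ecal\cdot E$ with the degree of the line bundle $\Os(\Ecal)$ restricted to the rational curve $E=\{t_1=t_2=0\}\cong\Pb^1$, and to read this degree off the explicit local model $\widehat{\Ucal}$. Since $\widehat{\Ucal}$ is the blow-up of $\Ucal$ along the ideal $\Ic=(t_1^{r_1},t_2^{r_2})$, the exceptional divisor $\Ecal$ is an effective Cartier divisor determined by $\Os(-\Ecal)=\Ic\cdot\Os_{\widehat{\Ucal}}$; in particular $\Os(\Ecal)$ is a genuine line bundle, and its restriction to the smooth complete curve $E$ has a well-defined degree, which is exactly $\Ecal\cdot E$.

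First I would write down a local equation for $\Ecal$ on each of the two affine charts. On $U$ the relation $t_1^{r_1}=u\,t_2^{r_2}$ shows $\Ic\cdot\Os_U=(t_1^{r_1},t_2^{r_2})=(t_2^{r_2})$, so $\Ecal$ is cut out by $s_U:=t_2^{r_2}$; symmetrically, on $V$ the relation $v\,t_1^{r_1}=t_2^{r_2}$ gives $\Ic\cdot\Os_V=(t_1^{r_1})$, so $\Ecal$ is cut out by $s_V:=t_1^{r_1}$. The transition function of $\Os(\Ecal)$ across $U\cap V$ is therefore $s_V/s_U=t_1^{r_1}/t_2^{r_2}$, which on $U$ is identically the chart coordinate $u$ (again by $t_1^{r_1}=u\,t_2^{r_2}$). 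Restricting to $E$, which carries the homogeneous coordinate $[u:v]$ with affine coordinate $u$ on $E\cap U$ and $1/u$ on $E\cap V$, the transition function is $u$; hence $\Os(\Ecal)|_E\cong\Os_{\Pb^1}(-1)$. Concretely, the section equal to $1$ in the $U$-frame becomes $u$ in the $V$-frame and thus has a single simple pole at $[1:0]$, giving $\deg\Os(\Ecal)|_E=-1$ and $\Ecal\cdot E=-1$. A feature worth flagging is that the exponents $r_1,r_2$ disappear entirely, since $t_1^{r_1}/t_2^{r_2}$ is precisely the blow-up coordinate $u$ whatever their values.

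The main difficulty is conceptual rather than computational: when $r_1,r_2\geq 2$ the model $\widehat{\Ucal}$ is singular, indeed non-normal, along $E$, so there is no discrete valuation recording an ``order of vanishing'' of $\Ecal$ along $E$, and one cannot compute $\Ecal\cdot E$ by a naive order-times-self-intersection recipe. The whole point of the argument is to bypass this by keeping $\Ecal$ Cartier throughout and reducing the intersection number to the degree of a line bundle on the smooth curve $E\cong\Pb^1$, for which no regularity of the ambient space is needed. I would also make explicit that $E$ here denotes the reduced exceptional curve $\{t_1=t_2=0\}$, consistent with the description of $E\cap U$ and $E\cap V$ given above, so that the multiplicity $m(\Sc)=r_1r_2$ with which $\Ecal$ carries $E$ is kept separate and enters only afterward, when $\Ecal^2=\Ecal\cdot[\Ecal]=m(\Sc)\,(\Ecal\cdot E)$ is assembled.
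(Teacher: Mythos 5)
Your proof is correct and follows exactly the paper's approach: the paper reduces the claim to showing $\Ocal(\Ecal)|_E\cong\Ocal_{\Pb^1}(-1)$ and dismisses this as "straightforward from the construction," while you supply precisely those details (the local equations $t_2^{r_2}$ and $t_1^{r_1}$ for $\Ecal$ on the two charts, the transition function $u$, and the degree count via the pole of an explicit section). Your observation that the argument needs only that $\Ecal$ is Cartier and $E\cong\Pb^1$ is complete — not normality of $\widehat{\Ucal}$ — is consistent with the paper, which only passes to the normalization in the subsequent lemma computing $\Ecal^2$.
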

\begin{proof}
 Since $E\simeq \Pb^1$, it is enough to check that the restriction of the line bundle associated to $\Ecal$ to $E$ is precisely the tautological line bundle $\Ocal_{\Pb^1}(-1)$.
 This is straightforward from the construction.
\end{proof}

We now show the following formula, which generalizes the usual self-intersection number of the exceptional divisor of the blow-up of a surface at one point.
\begin{Lemma}\label{lm:self:inters:nb:ex:div}
We have
\begin{equation}\label{eq:self:inters:ex:div}
\Ecal^2=-r_1r_2.
\end{equation}
\end{Lemma}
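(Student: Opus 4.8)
The plan is to reduce the self-intersection of the Cartier divisor $\Ecal$ to the elementary number $\Ecal\cdot E=-1$ provided by Lemma~\ref{lm:inters:Weil:Cart:div}, once the multiplicity with which the reduced curve $E$ occurs in $\Ecal$ is known. Since $\Ecal$ is Cartier, the pairing $\Ecal\cdot(-)$ descends to cycle classes, so it is enough to identify the Weil cycle $[\Ecal]$ as a multiple of $[E]$ and then expand $\Ecal^2=\Ecal\cdot[\Ecal]$ by bilinearity.

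First I would pin down this multiplicity using the already established description of the exceptional divisor. By hypothesis the relevant partition is $\Sc=\{I_0,I_1,I_2\}$ with $|I_0|=1$ and $|I_1|=|I_2|=2$, so $|\Sc|=3$, and it is the only partition contributing a non-trivial exceptional component (a partition with $r\geq 3$ parts would force $\sum_i\mu_i>3$, which is impossible). Substituting into \eqref{eq:except:Weil:div} of Theorem~\ref{th:tauto:div}(a), only this term survives and the coefficient of $\hat D_\Sc=E$ equals $(|\Sc|-2)\,m(\Sc)=m(\Sc)$. Since $m(\Sc)=d^2(\mu(I_1)-1)(\mu(I_2)-1)=r_1r_2$ by the definitions $r_i=d(\sum_{k\in I_i}\mu_k-1)$, I get the cycle identity $[\Ecal]=r_1r_2\,[E]$. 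The computation then closes immediately:
\[
\Ecal^2=\Ecal\cdot[\Ecal]=\Ecal\cdot\big(r_1r_2\,[E]\big)=r_1r_2\,(\Ecal\cdot E)=-r_1r_2,
\]
the last step being Lemma~\ref{lm:inters:Weil:Cart:div}.

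I expect the genuinely delicate point to be the value of the multiplicity, and getting it right is the crux. It can be verified directly on the local model $\widehat{\Ucal}=\{vt_1^{r_1}=ut_2^{r_2}\}$: on the chart $\{v\neq 0\}$ the pullback of the blown-up ideal is $(t_1^{r_1},t_2^{r_2})=(t_2^{r_2})$, so $\Ecal$ is cut out by $t_2^{r_2}$ and its order along $E$ is the length $\ell\big(\Oc_{S,E}/(t_2^{r_2})\big)$. The subtlety is that $S$ is in general \emph{non-normal} along $E$, so this length is \emph{not} $r_2\cdot\operatorname{ord}_E(t_2)$; using the relation $t_1^{r_1}=u\,t_2^{r_2}$ with $u$ a unit, the quotient $\Oc_{S,E}/(t_2^{r_2})$ is spanned over the residue field $\C(u)$ by the monomials $t_1^{i}t_2^{j}$ with $0\leq i<r_1$ and $0\leq j<r_2$, hence has length $r_1r_2$. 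This both confirms the coefficient and shows why a naive valuative count would give the wrong answer, so the safe route in the write-up is to invoke \eqref{eq:except:Weil:div} rather than to argue valuatively by hand.

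One should also record, for the general (multi-component) case alluded to in the text, that the same argument applies componentwise: distinct exceptional curves $\hat D_{\Sc}$ map to distinct points of $\ol{\Mod}_{0,5}$ and therefore do not meet, so there are no cross terms, and $\Ecal^2=\sum_{\Sc} m(\Sc)\,(\Ecal\cdot \hat D_\Sc)=-\sum_\Sc r_1(\Sc)r_2(\Sc)$, with $-r_1r_2$ being exactly the per-component contribution computed above.
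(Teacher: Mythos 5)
Your argument is correct, and its skeleton is exactly the reduction the paper states in the first line of its own proof: by Lemma~\ref{lm:inters:Weil:Cart:div} it suffices to know that the order of $\Ecal$ along $E$ is $r_1r_2$, whence $\Ecal^2=r_1r_2\,(\Ecal\cdot E)=-r_1r_2$. The difference is how that multiplicity is obtained. You quote it from \eqref{eq:except:Weil:div} in Theorem~\ref{th:tauto:div}(a) (equivalently \cite[Th.~1.2(a)]{Ng23:a}); the paper explicitly acknowledges that this citation would suffice, but deliberately gives a self-contained alternative: it passes to the normalization of the local model ($\tilde{U}\simeq\C^2/G_{m_1}$, with $c=\gcd(r_1,r_2)$ and $m_i=r_i/c$), computes that $\tilde{\varphi}^*\Ecal$ has order $cm_1m_2=r_1r_2/c$ along $\tilde{E}$, notes that $\tilde{E}\to E$ has degree $c$ so that $\tilde{E}\cdot\tilde{\varphi}^*\Ecal=-c$, and multiplies. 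Your supplementary computation $\mathrm{ord}_E(\Ecal)=\ell\bigl(\Oc_{U,E}/(t_2^{r_2})\bigr)=r_1r_2$, done directly on the non-normal chart, is in fact a complete second proof of the multiplicity: Fulton's order function is defined by lengths on any variety, so no normalization is needed. This is shorter than the paper's argument; what the normalization buys is that all orders become honest valuations (the paper normalizes precisely because it takes the view that order along a Weil divisor is only well defined on normal spaces), at the cost of tracking the degree $c$ of $\tilde{E}\to E$.

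Two side remarks in your write-up are inaccurate, though neither carries weight. First, the claim that the length is \emph{not} $r_2\cdot\mathrm{ord}_E(t_2)$ is wrong: the length-based order function is additive on products even for non-normal one-dimensional local domains (this is how one shows $\mathrm{ord}_V$ is a homomorphism on rational functions), and indeed $\mathrm{ord}_E(t_2)=\ell\bigl(\Oc_{U,E}/(t_2)\bigr)=r_1$, so $r_2\cdot\mathrm{ord}_E(t_2)=r_1r_2$ agrees with your count. The genuine trap is that $\mathrm{ord}_E(t_2)$ equals $r_1$, not $1$; equivalently, the valuative order computed upstairs on the normalization, namely $r_2m_1$, must be corrected by the factor $c=\deg(\tilde{E}\to E)$. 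Second, your parenthetical that a partition with $r\geq3$ parts would force $\sum_i\mu_i>3$ overlooks that $\mu(I_0)$ can be negative; the correct reason no such partition exists for $n=5$ is combinatorial: each part $I_j$ with $\mu(I_j)>1$ must contain at least two indices (since every $\mu_i<1$), so $2r\leq n-|I_0|\leq 5$ forces $r\leq 2$. In any case, the shape $\{I_0,I_1,I_2\}$ with $|I_0|=1$, $|I_1|=|I_2|=2$ is already part of the standing assumptions made just before the lemma, and your closing observation that distinct exceptional components are disjoint (they lie over distinct points of $\ol{\Mod}_{0,5}$) correctly disposes of cross terms in the general case.
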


By Lemma~\ref{lm:inters:Weil:Cart:div}, it is enough to show that  the order of $\Ecal$ along $E$ is $r_1r_2$. This is infact a consequence of \cite[Th. 1.2(a)]{Ng23:a}. However we will give here below an alternative proof using complex analytic arguments.

\begin{proof}[Proof of Lemma~\ref{lm:self:inters:nb:ex:div} in the analytic setting]
Since the order of a function along a Weil divisor is only well defined for normal spaces, we first need to pass to the normalization of $\widehat{\Ucal}$.
Let $\tilde{U}$ (resp. $\tilde{V}$) be the normalization of $U$ (resp. $V$). Then the normalization $\tilde{\Ucal}$ of $\widehat{\Ucal}$ is obtained by gluing $\tilde{U}$ and $\tilde{V}$ over $U\cap V$.
Let $\tilde{\varphi}: \tilde{\Ucal} \to \widehat{\Ucal}$ denote the normalizing map.

Let $c:=\gcd(r_1,r_2)$ and $m_i:=r_i/c, \; i=1,2$. Denote by $G_{m_i}$ the cyclic group $\Z/m_i\Z$. It is a well known fact that $\tilde{U}$ is isomorphic to  the quotient $\C^2/G_{m_1}$, where the action of $G_{m_1}$ on $\C^2$ is generated by $(t,x) \mapsto (\zeta_1t,\zeta_1^{-m_2}x)$ (here $\zeta_1$ is a primitive $m_1$-th root of unity). The normalizing projection $\tilde{\varphi}_{|\tilde{U}}: \tilde{U} \to U$ is induced by  the map $\varphi_U: \C^2 \to U, \; (t,x) \mapsto (t_1,t_2,u)=(xt^{m_2},t^{m_1},x^{r_1})$.

Let $\tilde{E}$ be the pre-image of $E$ in $\tilde{\Ucal}$.
The intersection $\tilde{E}\cap \tilde{U}$ is identified with $(\{0\}\times\C)/G_{m_1}$.
Observe that the restriction of $\tilde{\varphi}$ to $\tilde{E}$ has degree $c$.
This is because $\widehat{\Ucal}$ has $c$ local branches through every point $\{(0,0)\}\times \C^*$.
Therefore, we have
$$
\tilde{E}\cdot \tilde{\varphi}^*\Ecal= c\cdot(E\cdot\Ecal)=-c.
$$
A neighborhood of a regular point in $\tilde{E}$ can be identified with a neighborhood in $\C^2$ of a point $(0,x)$ with $x\neq 0$.
In this neighborhood $\tilde{E}$ is defined by the equation $t=0$, while the Cartier divisor $\tilde{\varphi}^*\Ecal$ is  defined by $t_2^{r_2}=t^{r_2m_1}=t^{cm_1m_2}$.
Thus the order of $\tilde{\varphi}^*\Ecal$ along $\tilde{E}$ is $cm_1m_2$.
It follows that
$$
\tilde{\varphi}^*\Ecal\cdot \tilde{\varphi}^*\Ecal=cm_1m_2\cdot(\tilde{E}\cdot\tilde{\varphi}^*\Ecal)=-c^2m_1m_2=-r_1r_2.
$$
Since $\deg \tilde{\varphi}=1$, we get $\Ecal^2=-r_1r_2$.
\end{proof}

In Table~\ref{table:vols:n5}, we record the values of $\hat{\Dcal}_\mu^2$ in the case where $n=5$ and $d\in \{3,4,6\}$, together with the values of the Masur-Veech volume of the corresponding stratum of $d$-differentials. The values on the last column  are the product of the ones in the third and the fourth columns with $\frac{\pi^3}{3!d}$.

\begin{table}[h]
\centering

\begin{tabular}{|  p{0.5cm}|  p{3 cm}|  p{2 cm} | p{2 cm} | p{2 cm} |}
\hline
\centering $d$ & \centering $(-k_i)$ & \centering $\frac1{d^2}\hat{\cal D}_\mu^{2}$ & \centering $d\vol^{MV}/d\vol$ & M-V volume \tabularnewline
\hline
\end{tabular}
\begin{tabular}{|  p{0.5cm}|  p{3 cm}|  p{2 cm} | p{2 cm} | p{2 cm} |}
\hline
\centering $3$ &$(2,1,1,1,1)$ & \centering $1/9$ & \centering $64/27$ & \centering $32\pi^3/2187$ \tabularnewline
\hline
\centering $3$ &$(2,2,2,1,-1)$ & \centering $-2/9$ & \centering $-64/27$ & \centering $64\pi^3/2187$ \tabularnewline
\hline
\centering $3$ &$(2,2,2,2,-2)$ & \centering $-2/9$ & \centering $-64/27$ & \centering $64\pi^3/2187$ \tabularnewline

\hline
\centering $4$ &$(2,2,2,1,1)$ & \centering $1/8$ & \centering $1$ & \centering $\pi^3/192$ \tabularnewline
\hline
\centering $4$ &$(3,2,1,1,1)$ & \centering $1/16$ & \centering $2$& \centering $\pi^3/192$ \tabularnewline
\hline
\centering $4$ &$(3,2,2,2,-1)$ & \centering $-3/16$ & \centering $-1$ & \centering $\pi^3/128$ \tabularnewline
\hline
\centering $4$ &$(3,3,2,1,-1)$ & \centering $-1/8$ & \centering $-2$ & \centering $\pi^3/96$ \tabularnewline
\hline
\centering $4$ &$(3,3,2,2,-2)$ & \centering $-1/4$ & \centering $-1$ & \centering $\pi^3/96$  \tabularnewline
\hline
\centering $4$ &$(3,3,3,1,-2)$ & \centering $-1/4$ & \centering $-2$ & \centering $\pi^3/48$ \tabularnewline
\hline
\centering $4$ &$(3,3,3,2,-3)$ & \centering $-3/16$ & \centering $-2$ &  \centering $\pi^3/64$ \tabularnewline

\hline
\centering $6$ &$(3,3,2,2,2)$ & \centering $1/6$ & \centering $16/27$ & \centering $4\pi^3/243$ \tabularnewline
\hline
\centering $6$ &$(3,3,3,2,1)$ & \centering $1/9$ & \centering $8/9$ & \centering $2\pi^3/729$ \tabularnewline
\hline
\centering $6$ &$(4,3,2,2,1)$ & \centering $1/12$ & \centering $32/27$ & \centering $2\pi^3/729$ \tabularnewline
\hline
\centering $6$ &$(4,3,3,1,1)$ & \centering $1/18$ & \centering $16/9$ & \centering $2\pi^3/729$ \tabularnewline
\hline
\centering $6$ &$(4,3,3,3,-1)$ & \centering $-5/36$ & \centering $-8/9$ & \centering $5\pi^3/1458$ \tabularnewline
\hline
\centering $6$ &$(4,4,2,1,1)$ & \centering $1/18$ & \centering $64/27$ & \centering $8\pi^3/2187$\tabularnewline
\hline
\centering $6$ &$(4,4,3,2,-1)$ & \centering $-1/9$ & \centering $-32/27$ & \centering $8\pi^3/2187$ \tabularnewline
\hline
\centering $6$ &$(4,4,3,3,-2)$ & \centering $-2/9$ & \centering $ -16/27$ & \centering $8\pi^3/2187$ \tabularnewline
\hline
\centering $6$ &$(4,4,4,1,-1)$ & \centering $-1/18$ & \centering $-64/27$ & \centering $8\pi^3/2187$  \tabularnewline
\hline
\centering $6$ &$(4,4,4,3,-3)$ & \centering $-1/4$ & \centering $-16/9$ & \centering $\pi^3/81$ \tabularnewline
\hline
\centering $6$ &$(5,2,2,2,1)$ & \centering $1/36$ & \centering $64/27$ & \centering $4\pi^3/2187$ \tabularnewline
\hline
\centering $6$ &$(5,3,2,1,1)$ & \centering $1/36$ & \centering $32/9$ & \centering $2\pi^3/729$ \tabularnewline
\hline
\centering $6$ &$(5,3,3,2,-1)$ & \centering $-1/12$ & \centering $-16/9$ & \centering $\pi^3/243$ \tabularnewline
\hline
\centering $6$ &$(5,3,3,3,-2)$ & \centering $-2/9$ & \centering $-8/9$ & \centering $4\pi^3/729$ \tabularnewline
\hline
\centering $6$ &$(5,4,1,1,1)$ & \centering $1/36$ & \centering $64/9$ & \centering $4\pi^3/729$  \tabularnewline
\hline
\centering $6$ &$(5,4,2,2,-1)$ & \centering $-1/12$ & \centering $-64/27$ & \centering $4\pi^3/729$ \tabularnewline
\hline
\centering $6$ &$(5,4,3,1,-1)$ & \centering $-1/18$ & \centering $-32/9$ & \centering $4\pi^3/729$ \tabularnewline
\hline
\centering $6$ &$(5,4,3,2,-2)$ & \centering $-1/6$ & \centering $-32/27$ & \centering $4\pi^3/729$ \tabularnewline
\hline
\centering $6$ &$(5,4,3,3,-3)$ & \centering $-1/4$ & \centering $-8/9$ & \centering $\pi^3/162$ \tabularnewline
\hline
\centering $6$ &$(5,4,4,1,-2)$ & \centering $-1/9$ & \centering $-64/27$ & \centering $16\pi^3/2187$ \tabularnewline
\hline
\centering $6$ &$(5,4,4,2,-3)$ & \centering $-1/4$ & \centering $-32/27$ & \centering $2\pi^3/243$ \tabularnewline
\hline
\centering $6$ &$(5,4,4,3,-4)$ & \centering $-2/9$ & \centering $-32/27$ & \centering $16\pi^3/2187$ \tabularnewline
\hline
\centering $6$ &$(5,4,4,4,-5)$ & \centering $-5/36$ & \centering $-64/27$ & \centering $20\pi^3/2187$ \tabularnewline
\hline
\centering $6$ &$(5,5,2,1,-1)$ & \centering $-1/18$ & \centering $-64/9$ & \centering $8\pi^3/729$ \tabularnewline
\hline
\centering $6$ &$(5,5,2,2,-2)$ & \centering $-1/9$ & \centering $-64/27$ & \centering $16\pi^3/2187$  \tabularnewline
\hline
\centering $6$ &$(5,5,3,1,-2)$ & \centering $-1/9$ & \centering $-32/9$ & \centering $8\pi^3/729$ \tabularnewline
\hline
\centering $6$ &$(5,5,3,2,-3)$ & \centering $-1/6$ & \centering $-16/9$ & \centering $2\pi^3/243$ \tabularnewline
\hline
\centering $6$ &$(5,5,3,3,-4)$ & \centering $-2/9$ & \centering $-16/9$ & \centering $8\pi^3/729$ \tabularnewline
\hline
\centering $6$ &$(5,5,4,-1,-1)$ & \centering $1/18$ & \centering $64/9$ & \centering $ 8\pi^3/729$ \tabularnewline
\hline
\centering $6$ &$(5,5,4,1,-3)$ & \centering $-1/6$ & \centering $-32/9$ & \centering $4\pi^3/243$ \tabularnewline
\hline
\centering $6$ &$(5,5,4,2,-4)$ & \centering $-2/9$ & \centering $-64/27$ & \centering $32\pi^3/2187$ \tabularnewline
\hline
\centering $6$ &$(5,5,4,3,-5)$ & \centering $-5/36$ & \centering $-32/9$ & \centering $10\pi^3/729$ \tabularnewline
\hline
\centering $6$ &$(5,5,5,-1,-2)$ & \centering $1/9$ & \centering $64/9$ & \centering $16\pi^3/729$ \tabularnewline
\hline
\centering $6$ &$(5,5,5,1,-4)$ & \centering $-2/9$ & \centering $-64/9$ & \centering $32\pi^3/729$ \tabularnewline
\hline
\centering $6$ &$(5,5,5,2,-5)$ & \centering $-5/36$ & \centering $-64/9$ & \centering $20\pi^3/729$ \tabularnewline
\hline
\centering $6$ &$(5,5,5,4,-7)$ & \centering $7/36$ & \centering $64/9$& \centering $28\pi^3/729$ \tabularnewline
\hline
\centering $6$ &$(5,5,5,5,-8)$ & \centering $4/9$ & \centering $64/9$ & \centering $64\pi^3/729$ \tabularnewline
\hline
\end{tabular}
\caption{Volume of $\Pb\stratesp$ when $n=5$.}
\label{table:vols:n5}
\end{table}


\begin{thebibliography}{ABC9}

\bibitem{ACG2011} E.~Arbarello, M.~Cornalba, P. A.~Griffiths: Geometry of Algebraic Curves. Vol II (with a contribution by J. Harris) Grundlehren der Mathematischen Wissenschaften 268, {\em Springer, Heidelberg} (2011).


\bibitem{AEZ14} J.S. Athreya, A. Eskin, and A. Zorich: Counting generalized Jenkins-Strebel differentials, {\em Geometri\ae \hspace{1mm}  Dedicata 170} (2014), 195--217.

\bibitem{AEZ16} J.S. Athreya, A. Eskin, and A. Zorich: Right-angled billiards and volumes of moduli spaces of quadratic differentials on $\mathbb{CP}^1$, with an appendix by Jon Chaika, {\em Ann. Sci. de l'E.N.S.} (4) 49 (2016), no. 6, 1311--1386.

\bibitem{BCGGM1} M. Bainbridge, D. Chen, Q. Gendron, S. Grushevsky, M. M\"oller: Compactification of strata of Abelian differentials, {\em Duke Math. Journ. 167} (2018), no.12, 2347--2416.

\bibitem{BCGGM2} M. Bainbridge, D. Chen, Q. Gendron, S. Grushevsky, M. M\"oller: Strata of $k$-differentials, {\em Algebr. Geom. 6} (2019), no.2, 196--233.

\bibitem{BCGGM:multiscale} M. Bainbridge, D. Chen, Q. Gendron, S. Grushevsky, M. M\"oller: The moduli spaces of multi-scale differentials, {\em  preprint}, {arxiv:1910.13492}.

\bibitem{ChenGen21} D.~Chen and Q.~Gendron: Towards a classification of connected components of the strata of $k$-differentials, {\em preprint}, {arxiv:2101.01650}.




\bibitem{CMZ19} M.~Costantini, M.~M\"oller, and J.~Zachhuber: The area is a good enough metric, {\em Ann. Inst. Fourier} (to appear), arxiv:1910.14151.



\bibitem{Engel-I} P.~Engel: Hurwitz Theory of elliptic orbifolds I, {\em Geometry \& Topology} {\bf 25} (2021), 229--274. 

\bibitem{ES18} P.~Engel and P.~Smillie: The number of convex tilings of the sphere by triangles, squares, or hexagons, {\em Geometry \& Topology} {\bf 22} (2018), 2839--2864.

\bibitem{EKZ14} A.~ Eskin, M.~Kontsevich, and A.~Zorich: Sum of Lyapunov exponents of the Hodge bundle with respect to the Teichm\"uller geodesic flow, {\em Publ. Math. I.H.\'E.S.} {\bf 120} (2014), no.1, 207-333.

\bibitem{EMZ03} A.~Eskin, H.~Masur, and A.~Zorich: {Moduli spaces of abelian differentials: The principal boundary, counting problems, and the Siegel-Veech constants}, {\em Publ. Math. I.H.\'E.S.} {\bf 97} (2003), 61-179.





\bibitem{EO01} A.~Eskin and A.~Okounkov: Asymptotics of numbers of branched coverings of a torus and volumes of the moduli spaces of holomorphic differentials, {\em Invent. Math.} {\bf 145} (2001), no. 1, p.59--103.

\bibitem{EO06} A.~Eskin and A.~Okounkov: Pillowcases and quasimodular forms, {\em Algebraic geometry and number theory}, 1--15, Progr. Math. {\bf 253}, Birkh\"auser, Boston, MA (2006).


\bibitem{EV92} H.~Esnault and E.~ Viehweg: {\em Lectures on vanishing theorems. Notes grew out of the DMV-seminar on algebraic geometry, held at Reisensburg, October 13-19, 1991}. Basel: Birkh\"auser Verlag, 1992.


\bibitem{FarPan18} G.~Farkas and R.~Pandharipande: The moduli space of twisted canonical divisors (with an appendix by F. Janda, R. Pandharipande, A. Pixton, and D. Zvonkine), {\em J.  Inst.  Math. Jussieu}, {\bf 17} (2018), no. 3, pp.~615--672.


\bibitem{Ful} W. Fulton: Intersection theory, {\bf Ergebnisse der Mathematik und ihrer Grenzgebiete 3}, 1984.

\bibitem{Gen18} Q.~Gendron: The Deligne-Mumford and the Incidence Variety Compactifications of the strata of the moduli space of Abelian differentials, {\em Annales de l'Institut Fourier} {\bf 68}, no.3 (2018), pp.~1169--1240.

\bibitem{Goujard16} E.~Goujard: Volumes of strata of moduli spaces of quadratic differentials: getting explicit values {\em Ann. Inst. Fourier}  {\bf 66} (2016), no. 6, 2203-2251.

\bibitem{Hart} R.~Hartshorne: Algebraic Geometry, {\bf Graduate Text in Mathematics 52}, Springer 1977.

\bibitem{Has03} B. Hassett: Moduli spaces of weighted pointed stable curves, {\em Adv. Math.} {\bf 173} (2003), 316--352.


\bibitem{Kawa} Y. Kawamata: Subadjunction of log canonical divisors for a subvariety of codimension 2, {\em Contemporary Math.} {\bf 207} (1997), 79--88

\bibitem{Keel} S.~Keel: Intersection theory of Moduli Space of Stable $N$-pointed curves of genus zero, {\em Trans. Amer. Math. Soc.} {\bf 330}  (1992), 545--574.




\bibitem{KN18} V.~Koziarz and D.-M.~Nguyen: Complex hyperbolic volume and intersection of boundary divisors in moduli spaces of pointed genus zero curves, {\em Ann. Sci. de l'\'E.N.S. (4)} {\bf 51} (2018), no.6, 1549--1597.

\bibitem{KN20}V.~Koziarz and D.-M.~Nguyen: Variation of Hodge structure and enumeration of tilings of surfaces by triangles and squares, {\em J. \'Ecole Polytech. Math. 8} (2021), 831--857. 





\bibitem{MT02} H.~Masur and  S.~Tabachnikov: Rational  billiards  and   flat  structures,  {\em Handbook  of dynamical  systems, 1A},  {North-Holland,  Amsterdam} (2002), pp.~1015--1089.

\bibitem{MZ08} H.~Masur and A.~Zorich: {Multiple saddle connections on flat surfaces and the boundary principle of the moduli space of quadratic differentials}, {\em Geom. Funct. Anal.} {\bf 18} (2008), no.3, pp.~919-987.


\bibitem{McM_conegb} C.~McMullen: {The Gauss-Bonnet theorem for cone manifolds and volumes of moduli spaces}, {\em Amer. J. Math.}, {\bf 139} (2017), no.1, 261--291.



\bibitem{Ng12} D.-M.~Nguyen: {Energy functions on moduli spaces of flat surfaces with erasing forest}, {\em Math. Ann.} {\bf 353}  (2012), no.3, 997--1036.

\bibitem{Ng19} D.-M.~Nguyen: Volume forms on moduli spaces of d-differentials, {\em Geometry \& Topology {\bf 26}} (2022), 3173-3220. 

\bibitem{Ng22} D.-M. Nguyen: On the volumes of linear subvarieties in moduli spaces of projectivized Abelian differentials, {\em preprint}, {arxiv:2209.12815}.

\bibitem{Ng23:a} D.-M.~Nguyen: The incidence variety compactification of strata of $d$-differentials in genus $0$, {\em preprint}, arxiv:2109.09352.


\bibitem{Sau20} A.~Sauvaget: Volume of moduli space of flat surfaces, {\em preprint}, {arxiv:2004.03198}.

\bibitem{Sau21} A.~Sauvaget: Notes on volumes of moduli of k-differentials, {\em private communication}.

\bibitem{Sch18} J. Schmitt: Dimension theory of the moduli space of twisted $k$-differentials, {\em Documenta Mathematica} {\bf 23} (2018), 871--894.

\bibitem{Th98} W.P.~Thurston: Shapes of polyhedra, {\em Geom. \& Top. Monogr.}, Vol. 1: The Epstein birthday schrift, 511-549, Geom. Topol. Publ., Coventry (1998).

\bibitem{Troy91} M.~Troyanov: {Prescribing curvature on compact surfaces with conical singularities}, {\em Trans. Amer. Math. Soc.} {\bf 324:2} (1991), pp.~793-821.


\bibitem{Veech:QD} W.A.~Veech: Moduli spaces of quadratic differentials, {\em J. Analyse Math.} {\bf 55} (1990), 117 -- 171.

\bibitem{Veech:FS} W.A. Veech: Flat surfaces, {\em Amer. J. Math.} {\bf 115} (1993), no.3, 589--689.



\bibitem{W:survey} A.~Wright:  From rational billiards to dynamics on moduli spaces, {\em Bull. Amer. Math. Soc. (N.S.)} {\bf 53} (2016), no. 1, 41--56.

\bibitem{Z:survey} A.~Zorich: {Flat surfaces},  {\em Frontiers   in  number  theory,   physics,  and  geometry}, Springer, Berlin (2006), 437--583.
\end{thebibliography}
\end{document}